\documentclass[11pt,a4paper,oneside,final,reqno]{amsart}

\usepackage[backend=bibtex, style=alphabetic, sorting=nyt, maxnames=50]{biblatex}
\renewbibmacro{in:}{}

\DeclareFontFamily{OT1}{pzc}{}
\DeclareFontShape{OT1}{pzc}{m}{it}{<-> s * [1.10] pzcmi7t}{}
\DeclareMathAlphabet{\mathpzc}{OT1}{pzc}{m}{it}

\allowdisplaybreaks

\usepackage{comment}
\usepackage[T1]{fontenc}
\usepackage[utf8]{inputenc}
\usepackage[dvipsnames]{xcolor}
\usepackage[english]{babel}
\usepackage{mathtools, amsthm, amssymb, amscd, amsmath} 
\usepackage{newtxtext, newtxmath} 
\usepackage[DIV=11]{typearea} 
\usepackage{hyperref} 
\usepackage{todonotes}
\usepackage{enumitem} 
\usepackage{xypic} 
\usepackage{tikz}
\usepackage[hmargin=2.5cm,vmargin=2.5cm]{geometry}
\usetikzlibrary{cd, decorations.markings, arrows} 
\usepackage{thmtools} 
\usepackage[capitalise]{cleveref}
\usepackage{tensor}

\DeclareMathAlphabet{\mathcal}{OMS}{cmsy}{m}{n} 

\definecolor{DarkPurple}{rgb}{0.40,0.0,0.20}
\hypersetup{
	colorlinks =true,
	linkcolor= DarkPurple, 
	citecolor = NavyBlue 
} 

\usetikzlibrary{cd, decorations.markings, arrows, matrix, calc}

\newtheorem{lemma}{Lemma}[section]
\newtheorem{corollary}[lemma]{Corollary}
\newtheorem{theorem}[lemma]{Theorem}
\newtheorem*{theorem*}{Theorem}
\newtheorem{proposition}[lemma]{Proposition}
\newtheorem{introtheorem}{Theorem}
\newtheorem{assumption}[lemma]{Assumption}

\theoremstyle{definition}
\newtheorem{definition}[lemma]{Definition}
\newtheorem{example}[lemma]{Example}
\newtheorem{remark}[lemma]{Remark}
\newtheorem{question}[lemma]{Question}

\newcommand{\G}{\mathcal{G}}
\newcommand{\Go}{\mathcal{G}^{(0)}}
\newcommand{\E}{\mathcal{E}}
\newcommand{\cK}{\mathcal{K}}
\newcommand{\cX}{\mathcal{X}}
\newcommand{\cY}{\mathcal{Y}}
\newcommand{\cE}{\mathcal{E}}
\newcommand{\curlyL}{\mathcal{L}}
\newcommand{\curlyH}{\mathcal{H}}

\newcommand{\A}{\mathcal{A}}

\newcommand{\supp}{\mathrm{supp}}
\newcommand{\FS}{\mathrm{FS}}

\newcommand{\mk}{\mathrm{mk}}

\newcommand{\Ima}{\mathrm{Im}}
\newcommand{\Lip}{\mathrm{Lip}}

\newcommand{\Cred}{C_{r}^{*}}
\def\acts{\curvearrowright}

\newcommand{\C}{\mathbb{C}}
\newcommand{\R}{\mathbb{R}}

\newcommand{\N}{\mathbb{N}}

\title[]{Quantum metrics from length functions on \'etale groupoids}

\author[1]{Are Austad}

\address{Department of Mathematics, University of Oslo, P.O. Box 1053 Blindern, N-0316 Oslo, Norway}
\email{areaus@math.uio.no}

\keywords{Compact quantum metric spaces, étale groupoids, noncommutative geometry, length functions}
\subjclass[2020]{Primary: 58B34  ; Secondary: 22A22, 46L87  }

\numberwithin{equation}{section} 

\begin{document}

	\maketitle

	\begin{abstract}
		We show how to construct a compact quantum metric space from a proper continuous  length function on an \'etale groupoid with compact unit space, where the unit space additionally has the structure of a compact metric space. 
		Using compactly supported Fourier multipliers on the reduced groupoid $C^*$-algebra we provide a sufficient condition for verifying when we obtain a compact quantum metric space in this manner. The condition is sometimes also necessary, and is new even in the case of length functions on discrete groups. 
		Lastly, we show that any AF groupoid with compact unit space can be equipped with a length function from which we obtain a compact quantum metric space, thereby providing a groupoid approach to understanding the quantum metric geometry of unital AF algebras. 
	\end{abstract}

	\section{Introduction}
	The theory of compact quantum metric spaces as pioneered by Rieffel \cite{RieffelMetricsOnStateSpaces1999, RieffelCQMS04, Rieffel2004qGH, Rieffel2004MatrixAlgs, RieffelMartricialBridges2016} extends the classical theory of compact metric spaces to the quantum (or non-commutative) setting. 
	Since its inception, the literature on compact quantum metric spaces has grown vast, seeing contributions from many people. 
	Aligning with current trends in noncommutative geometry \cite{vanSuiljekom21, CvSSpecTrunc21, CvSTolerance22, GielenSuijlekom23, RieffelTruncations23, LeimbachSuijlekom24, KaadExternal24}
	we view a compact quantum metric space as a pair $(\cX, L)$ consisting of a (not necessarily closed) operator system $\cX$ and a $*$-invariant seminorm $L \colon \cX \to [0,\infty)$ with $\C \cdot 1_{\cX} \subseteq \ker L$ and for which the associated Monge-Kantorovi\v{c} metric
	\begin{align*}
		\mk_L(\phi, \psi) = \sup \{ \vert \phi(x) - \psi(x) \vert \mid x\in \cX \text{ and } L(x) \leq 1\}, \quad \phi,\psi \in S(\cX)
	\end{align*} 
	metrizes the weak$^*$ topology on $S(\cX)$, the state space of $\cX$. That these generalize compact metric spaces is discussed in \cite{RieffelCQMS04}.

	A rich source of examples of candidates for compact quantum metric spaces is unital spectral triples. As such, the study of compact quantum metric spaces is strongly linked to Connes' noncommutative geometry \cite{Connes1989, ConnesNCGBook}. Indeed, suppose $(\mathcal{A}, \curlyH, D)$ is a unital spectral triple, where $\mathcal{A}$ is a dense unital $*$-subalgebra of a unital $C^*$-algebra $A$, $\mathcal{H}$ is a separable Hilbert space such that $A \subseteq B(\curlyH)$, and $D$ is a (typically unbounded) self-adjoint operator on $\curlyH$. We then use the bounded commutator condition for spectral triples to define a seminorm
	\begin{align*}
		L(a) = \Vert [D, a]\Vert \quad \text{for $a \in \mathcal{A}$},
	\end{align*}
	and ask if $(\mathcal{A}, L)$ is a compact quantum metric space. 
	Of particular interest to us is the fact that proper  
	length functions on a discrete group $G$ give rise to Dirac operators on $\ell^2(G)$. These in turn yield spectral triples in a natural way, and there are deep results regarding when this construction yields compact quantum metric spaces as above \cite{Rieffel02CQMS, ChristensenIvanRD, OzawaRieffel2004, ChristRieffel}. 
	Similar questions have been considered for compact quantum groups using generalizations of length functions \cite{BhowmickVoigtZacharias2015, AustadKyed2026}, or other spectral triples \cite{AguilarKaad-Podles2018, KaadKyedSU2}. 
	Moreover, recent work has also lent insight into how we might construct compact quantum metric spaces from crossed products \cite{BellissardMarcolliReihani2010, HawkinsSkalskiWhiteZacharias2013, KlisseCQMS, AustadKaadKyed2025}.

	Notably absent from the literature are results on quantum metric structures arising from \'etale groupoids. 
	The main focus of the present article is to remedy this. In particular, we show how an \'etale groupoid $\G$ with compact unit space $\Go$ together with 
	\begin{itemize}
		\item a proper continuous length function $\ell$ on $\G$, and
		\item a metric $d$ on the unit space $\Go$ inducing the topology
	\end{itemize}
	give rise to a candidate $(\cX, L)$ for a compact quantum metric space. More precisely, we find an operator system $\cX$ in the reduced groupoid $C^*$-algebra $\Cred(\G)$ along with a seminorm $L \colon \cX \to [0,\infty)$ from which we may attempt to obtain a compact quantum metric space. 
	The construction jointly generalizes the analogous constructions known for length functions on countable discrete groups, and those for compact metric spaces. 

	It is however not obvious how to even specify an operator system $\cX \subseteq \Cred (\G)$ nor a seminorm, which when combined is sufficiently refined enough  to yield a compact quantum metric space.
	The notion of a length function on a discrete group generalizes straightforwardly to \'etale groupoids, see \cref{def:length-function}. Indeed, a proper continuous length function $\ell$ on an \'etale groupoid $\G$ with compact unit space gives rise to an unbounded operator $D_\ell$ on the Hilbert $C^*$-module $\cE$ naturally associated with the left regular representation $\Lambda$ of $C_c(\G)$. We then further mimic the construction from the discrete group case by defining the seminorm
	\begin{align*}
		L_\ell (f) := \Vert [D_\ell, \Lambda(f)] \Vert .
	\end{align*}
	However, 
	we see that $L_\ell$ trivializes for compact metric spaces $(X,d)$, and so the desired seminorm $L$ must also include the metric on $\Go$.

	Using the fact that $(\Go,d)$ is a compact metric space
	we immediately have access to another seminorm which assigns to a function $f\in C_c(\G)$ the Lipschitz constant of $f_{\vert_{\Go}}$. 
	However, this is again not enough; if $\G = \Gamma \ltimes X$ is a transformation groupoid, we should also consider for $f = \sum_{g\in \G} f_g \lambda_g \in C_c(\G)$ the Lipschitz constants of $f_g \in C(X)$ for each $g \in \Gamma$, \cite{BellissardMarcolliReihani2010, HawkinsSkalskiWhiteZacharias2013, KlisseCQMS, AustadKaadKyed2025}.
	We thus specify 
	a sub-operator system of 
	$C_c(\Gamma \ltimes X)$ 
	which consists of $f$ for which each $f_g$ is Lipschitz continuous. 
	
	Inspired by the decomposition $\G = \cup_{g \in \Gamma} \{g\} \times X$ of a transformation groupoid $\Gamma \ltimes X$, we introduce the notion of a metric stratification of a general \'etale groupoid $\G$ equipped with a metric $d$ on its compact unit space $\Go$. 
	More specifically, a metric stratification is a decomposition $\cK = (K_i)_{i\in I}$ of $\G$ into countably many disjoint subsets for some countable index set $I$, 
	subject to 
	some additional assumptions, 
	see \cref{def:metric-stratification}. 
	Each $K_i$ is a precompact metric space with respect to a naturally appearing metric $d^{(i)}$ on $K_i$.
	Thus every $f \in C_c(\G)$ may be uniquely decomposed as $f = \sum_{i\in I} f_{\vert_{K_i}}$,  
	and by
	measuring the Lipschitz constants of each $f_{\vert_{K_i}}$, $i \in I$, we obtain from $\cK$ a seminorm $L^\cK_{\rm Lip}$, see \eqref{eq:def-decomposition-Lipschitz-norm}.

	There is 
	then 
	a  
	sub-operator system of $C_c(\G)$ given by
	\begin{align*}
		\Lip_c^{\cK}(\G) := \{ f \in C_c(\G) \mid L^\cK_{\rm Lip}(f) < \infty \}.
	\end{align*}
	Combining the above seminorms yields a new seminorm, namely
	\begin{align*}
		L (f) := \max \{ L_\ell(f) , L^{\cK}_{\rm Lip}(f)  \}
	\end{align*}
	for 
	$f \in \Lip_c^{\cK}(\G)$, 
	see \cref{def:total-seminorm}, and we may ask
	\begin{center}
		{\it Is 
			$(\Lip_c^{\cK}(\G), L)$ 
			a compact quantum metric space?}
	\end{center}
	More generally we could ask the same question, but allowing for iterated commutators being used in the definition of $L_\ell$, see \eqref{eq:definition-commutator-seminorm}. 
	
	Having now specified a candidate for a compact quantum metric space arising from an \'etale groupoid equipped with a length function and a metric on its compact unit space, we provide a verifiable condition 
	implying the
	above question has a positive answer in \cref{thm:new-characterization}. This will be done through a novel approach using 
	state-of-the-art results about Fourier multipliers on groupoid crossed products from \cite{BussKwasniewskiMcKeeSkalski2024}.
	In particular, we note that for any $\phi \in C_c(\G)$, there is a completely bounded multiplier 
	\begin{align*}
		m_\phi \colon \Cred(\G) \to \Cred(\G) , \quad m_\phi(f)(\gamma) = \phi(\gamma) f(\gamma), \quad \text{ for }\gamma\in \G,
	\end{align*}
	see \cref{prop:Cc-is-in-FS}.
	We will 
	need that 
	the multipliers 
	are compatible with the chosen metric stratification $\cK$ in the following sense. We say $m_\phi$ is $\cK$-continuous with coefficient $D$ if $L^\cK_{\rm Lip}(m_\phi (f)) \leq D \cdot L^\cK_{\rm Lip}(f)$  for all $f \in \Lip_c^\cK(\G)$. 
	The following is the first main result of the article. 
	\begin{introtheorem}[cf. \cref{thm:new-characterization}]\label{thm:first-intro-theorem} 
		Let $\G$ be an \'etale groupoid with compact unit space $\Go$, and let $\ell \colon \G \to [0,\infty)$ be a proper continuous length function. 
		Suppose moreover $d$ is a metric on $\Go$ inducing its topology,
		and fix a metric stratification $\cK$ of $\G$. Denote by $E$ the set
		\begin{align*}
			E := \{ f \in \Lip_c^{\cK}(\G) \mid L(f) \leq 1 \}.
		\end{align*}
		Consider the statements
		\begin{enumerate}
			\item 
			For every $\varepsilon > 0$ there is 
			$\phi \in C_c(\G)$ 
			such that $m_\phi$ is unital and $\cK$-continuous, and such that
			\begin{align*}
				\sup_{f\in E} \Vert f - m_{\phi}(f) \Vert_{\Cred(\G)} < \varepsilon.
			\end{align*}
			\item $( \Lip_c^{\cK}(\G), L)$ 
			is a compact quantum metric space.
		\end{enumerate}
		Then (1) implies (2).

		Moreover, the converse implication holds if $\G$ admits a sequence of functions
		$(\phi_j)_{j \in \N} \subseteq C_c(\G)$ converging uniformly to $1$ on compact subsets, 
		such that $m_{\phi_j}$ is unital and $\cK$-continuous with coefficient 
		$D_j \geq 0$ 
		for all $j$, and
		satisfying $\sup_j \Vert m_{\phi_j} \Vert < \infty$. 
	\end{introtheorem}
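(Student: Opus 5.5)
We use the standard criterion for compact quantum metric spaces (Rieffel, in the operator-system form used in the cited literature). Since $L$ vanishes on scalars, $(\Lip_c^{\cK}(\G), L)$ is a compact quantum metric space if and only if the image of $E$ in the quotient $\Cred(\G)/\C 1$ is totally bounded. Equivalently, for some (any) state $\mu$, the set $E_\mu:=\{f\in E \mid \mu(f)=0\}$ is totally bounded in $\Cred(\G)$. We take $\mu$ to be the state given by a point evaluation $f\mapsto f(x_0)$, $x_0\in\Go$, composed with the conditional expectation onto $C(\Go)$.

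\textbf{(1) implies (2).} Fix $\varepsilon>0$ and choose $\phi$ as in (1). Write $K:=\supp\phi$, a compact set, and let $D$ be the $\cK$-continuity coefficient of $m_\phi$. Since $m_\phi$ is unital, $f\mapsto m_\phi(f)$ preserves differences by scalars, so it suffices to show that $m_\phi(E_\mu)$ is totally bounded. Every $g=m_\phi(f)$ with $f\in E_\mu$ has the following properties.
\begin{itemize}
\item $g$ is supported in $K$.
\item $L^\cK_{\rm Lip}(g)\le D$.
\item $\|g\|_\infty$ is bounded uniformly. This follows from an oscillation bound, which is the main step and is discussed below.
\end{itemize}
Because $K$ is compact and $\G$ is étale, $K$ is covered by finitely many open bisections $U_1,\dots,U_n$. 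The stratification condition forces $K$ to meet only finitely many strata $K_i$; this is the first thing to check from \cref{def:metric-stratification}. On each precompact metric space $(K_i,d^{(i)})$ the functions $g|_{K_i}$ are uniformly bounded and $D$-Lipschitz, so Arzelà--Ascoli makes them totally bounded in sup norm. For functions supported in a fixed compact set, the sup norm controls the reduced norm via $\|g\|\le \sum_k \|g|_{U_k}\|_\infty$ (after a partition of unity subordinate to the $U_k$). Hence $m_\phi(E_\mu)$ is $\varepsilon$-close to a totally bounded set, and since $\varepsilon$ is arbitrary, $E_\mu$ is totally bounded.

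\textbf{The uniform sup bound (main obstacle).} The worry is that $\mu(f)=0$ controls only the value $f(x_0)$ at a single unit, not all of $f$. The Lipschitz constraint on the strata covering $\Go$, together with compactness of $\Go$, bounds $\|f|_{\Go}\|_\infty$ by $\operatorname{diam}$-type constants. For $\gamma\in K\setminus \Go$, we bound $|f(\gamma)|$ by $\|f\|$ in the reduced algebra, which is a priori not controlled on $E$. Instead we use (1) itself: it gives $\|f-m_\phi(f)\|<\varepsilon$, and combined with the Lipschitz control on the finitely many strata meeting $K$ this yields a bound of the form $\|m_\phi(f)\|_\infty\le C+D'$. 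If this bootstrap fails, the fallback is to invoke $L_\ell$: the commutator $[D_\ell,\Lambda(f)]$ has ``matrix entries'' $(\ell(\gamma)-\ell(\eta))f(\gamma\eta^{-1})$, which bounds $|\ell(\gamma)f(\gamma)|$ wherever $\ell(\gamma)\ge c>0$. On a neighbourhood of $\Go$ where $\ell$ is small, continuity of the Lipschitz control handles the remaining values.

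\textbf{Converse.} Assume (2), so $E_\mu$ is totally bounded, and take $\phi_j$ with the stated properties, setting $M:=\sup_j\|m_{\phi_j}\|$. Given $\varepsilon$, pick a finite $\varepsilon/(3(M+1))$-net $\{f_1,\dots,f_N\}\subseteq E_\mu$. Each $f_k$ has compact support, and $\phi_j\to1$ uniformly on compacts, so $\|f_k-m_{\phi_j}(f_k)\|\to0$, using the sup-to-reduced norm estimate on a fixed compact set. Choose $j$ so that these differences are below $\varepsilon/3$ for every $k$. For $f\in E$, write $f=\lambda1+f'$ with $f'\in E_\mu$; unitality removes $\lambda$. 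Then a $3\varepsilon$-argument using $\|m_{\phi_j}\|\le M$ gives $\sup_E\|f-m_{\phi_j}(f)\|<\varepsilon$, and $\cK$-continuity is part of the hypothesis.
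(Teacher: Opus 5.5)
Your converse is the paper's argument, and your outline of (1)$\Rightarrow$(2) is sound. However, the step you flag as the main obstacle is not established as written.

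\textbf{The bootstrap cannot work.} For a discrete group every stratum is a singleton, so $L^{\cK}_{\rm Lip}\equiv 0$. Take $f=c\,\delta_g$ with $g\neq e$ and $\phi(g)=1$. Then $\mu(f)=0$ and $\Vert f-m_\phi(f)\Vert=0$ for every $c\in\C$, while $\Vert m_\phi(f)\Vert_\infty=\vert c\vert$. Only the constraint $L_\ell(f)=\vert c\vert\,\ell(g)\le 1$ bounds $c$. So the commutator estimate has to be the argument, not a fallback.

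\textbf{The last clause of your fallback is also wrong.} $L^{\cK}_{\rm Lip}$ only compares values inside a single stratum, and $\Go=K_e$ is a stratum by itself. So no Lipschitz control passes from units to nearby non-units. What you need instead is that the problematic region does not exist. Since $\Go$ is open in the \'etale groupoid $\G$, the set $K\setminus\Go$ is compact, and $\ell$ attains a minimum $c_K>0$ on it.

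\textbf{The repair.} Because $\delta(f)1_{\Go}=\ell\cdot f$ and $\Vert 1_{\Go}\Vert_{\E}=1$, you get $\ell(\gamma)\vert f(\gamma)\vert\le\Vert\delta(f)\Vert\le 1$. Hence $\vert f\vert\le c_K^{-1}$ on $K\setminus\Go$. On $\Go$, your choice $f(x_0)=0$ together with $L^{K_e}_{\rm Lip}(f)\le 1$ gives $\vert f\vert\le\mathrm{diam}(\Go,d)$. Therefore $\Vert m_\phi(f)\Vert_\infty\le\Vert\phi\Vert_\infty\max\{c_K^{-1},\mathrm{diam}(\Go,d)\}$ uniformly over $f\in E_\mu$.

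With this repair your proof is complete, and it differs from the paper's route in a useful way.

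The paper bounds the full seminorm of $m_\phi(f)$. For the commutator part this needs the extension $T^\phi$ of $m_\phi$ to $\Cred(\G\ltimes\beta\G)$ (\cref{lemma:multipliers-extend-to-Roe}). It then applies \cref{lemma:compact-subsets-yield-CQMS} to $D\cdot E^\sigma_{\cK,n}[t]$, where the sup bounds come from the commutator of $m_\phi(f)$ itself.

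You instead bound $m_\phi(f)$ pointwise by $\Vert\phi\Vert_\infty\vert f\vert$ and use the commutator of $f$. So $T^\phi$ is never needed, and only $\cK$-continuity enters. Arzel\`a--Ascoli on the finitely many totally bounded strata meeting $K$ replaces the explicit Lipschitz partitions of unity. Covering $K$ by finitely many open bisections gives the sup-to-reduced-norm estimate without \cref{assumption:uniform-fiber-cardinality-bound}.

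The paper's heavier route pays off later. The estimate $L(m_\phi(f))\le\max\{\Vert T^\phi\Vert,C_\phi\}\,L(f)$ is exactly the slip-norm contraction needed for the quantum Gromov--Hausdorff convergence in \cref{thm:CQMS-from-AF-groupoids}.
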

	
	\cref{thm:first-intro-theorem} looks slightly different from \cref{thm:new-characterization}, as we will later consider a seminorm using iterated commutators. 
	In fact, we show that if $\G$ has polynomial growth, or more generally rapid decay, with respect to $\ell$ (see \cref{sec:prelims-groupoids}), then 
	under very mild assumptions
	there is  always $n$ large enough so that we obtain a compact quantum metric space by employing a seminorm using $n$ iterated commutators, see \cref{prop:Christensen-Ivan-for-groupoids}. 
	
	Moreover, \cref{thm:first-intro-theorem} is new even in the setting of discrete groups equipped with proper length functions. For a discrete group the condition relating to continuity with respect to a metric stratification disappears, and the existence of a sequence $(\phi_j)_j$ as in \cref{thm:first-intro-theorem} is in fact guaranteed by weak amenability of the group. Thus we are able to provide a sufficient and necessary criterion for a weakly amenable group equipped with a proper length function to yield a compact quantum metric space as explained above. In particular, this provides a new perspective on the main results of \cite{ChristRieffel, OzawaRieffel2004}, see \cref{cor:FD-CQMS-approx-discrete-groups}.
	
	By employing \cref{thm:first-intro-theorem} we are able to provide the first examples of compact quantum metric spaces arising from groupoids (which do not merely reduce to groups, spaces, or transformation groupoids), namely AF groupoids with compact unit spaces, see \cref{thm:second-intro-theorem}. Such groupoids provide models for unital AF algebras. In particular through the work of Aguilar a lot of effort has gone into understanding  quantum metric aspects of unital AF algebras \cite{AguilarLatremoliere2015, AguilarThesis, AguilarFelltops2019, AguilarIndLim2021}. Using the fact that any unital AF algebra admits a groupoid model, we therefore provide a groupoid approach to understanding the quantum metric geometry of unital AF algebras.
	
	Any AF groupoid with compact unit space arises as the groupoid associated with a Bratteli diagram in a manner which we explain in \cref{sec:AF-groupoids}, and this is the point of view we will take. The Bratteli diagram also naturally induces the length function $\ell$ we will employ, see \cref{def:AF-length-function}, and to our knowledge, this the first time this length function appears in the literature. 
	We have the following result. 
	
	\begin{introtheorem}[cf. \cref{thm:CQMS-from-AF-groupoids}]\label{thm:second-intro-theorem}
		Let $\G$ be an AF groupoid arising from a Bratteli diagram $B$ with finitely many sources. 
		Furthermore, let $\ell$ be the length function arising from $B$ given by \eqref{eq:def-AF-length-function}, and let $d$ be a metric on $\Go$ inducing the topology. 
		Equip $\G$ with the metric stratification given by $\cK = (K_i)_{i \in \N \cup \{0\}}$ where $K_i = \ell^{-1}(\{i\})$. Then $(\Lip_c^{\cK}(\G), L)$  
		is a compact quantum metric space. 
	\end{introtheorem}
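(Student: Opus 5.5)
We will deduce the statement from \cref{thm:first-intro-theorem}, verifying condition (1) by truncating in the length function. Since $K_i = \ell^{-1}(\{i\})$ and $\ell$ takes integer values determined by the Bratteli diagram, each $K_i$ is clopen. Properness of $\ell$ and finiteness of the sources should make each $K_i$ compact open. The natural candidate multipliers are therefore
\[
\phi_n := \mathbf{1}_{\ell^{-1}([0,n])} = \sum_{i=0}^n \mathbf{1}_{K_i} \in C_c(\G).
\]
Because $\ell^{-1}(\{0\}) = \Go$ (the length of a unit is $0$, and we expect only units to have length $0$ for this length function), $\phi_n$ equals $1$ on $\Go$, so $m_{\phi_n}$ is unital. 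Moreover $m_{\phi_n}(f)$ restricted to $K_i$ is either $f_{\vert_{K_i}}$ or $0$. Hence $L^\cK_{\rm Lip}(m_{\phi_n}(f)) \le L^\cK_{\rm Lip}(f)$, and $m_{\phi_n}$ is $\cK$-continuous with coefficient $1$.

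The key structural input is that $\ell^{-1}([0,n])$ should be exactly the subgroupoid $\G_n$ of pairs of paths that agree beyond level $n$. This is the elementary groupoid giving the finite-dimensional subalgebra $A_n \subseteq \Cred(\G)$, and we will check it directly from \eqref{eq:def-AF-length-function}. Then $m_{\phi_n}$ is the conditional expectation $E_n$ onto $C^*(\G_n) \cong A_n$. This is because $\G_n$ is a clopen subgroupoid, and restriction to an open subgroupoid is a contractive conditional expectation; this could also be derived from \cref{prop:Cc-is-in-FS}. In particular $\Vert m_{\phi_n}\Vert \le 1$ uniformly.

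It remains to show that $\sup_{f\in E}\Vert f - E_n(f)\Vert \to 0$, which is the main obstacle. The idea is to use the commutator bound $L_\ell(f)\le 1$ to control the off-level parts. Write $f - E_n(f) = \sum_{i>n} f_{\vert_{K_i}}$. The component $f_{\vert_{K_i}}$ is obtained from $f$ by the spectral projection of the multiplication operator by $\ell$ onto the value $i$, and $[D_\ell, \Lambda(f_{\vert_{K_i}})]$ corresponds to the pointwise product $\ell f_{\vert_{K_i}} = i f_{\vert_{K_i}}$. Here we use that $\ell$ is not merely a length function but essentially a pointwise multiplier on convolution: for AF groupoids we should have $\ell(\gamma\eta) \le \max(\ell(\gamma),\ell(\eta))$, an ultrametric-type length. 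First I would try to show $\Vert f_{\vert_{K_i}}\Vert \le C\, i^{-1}\Vert [D_\ell,\Lambda(f)]\Vert$ via Fourier multiplier estimates, with $m_{\mathbf 1_{K_i}}$ having norm at most $2$ as a difference of the conditional expectations $E_i - E_{i-1}$.

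Summing $1/i$ diverges, so instead I would bound $\Vert f - E_n(f)\Vert$ directly by a single commutator estimate. In the ultrametric situation, $f - E_n(f)$ lives in the relative commutant structure between $A_n$ and the higher levels. Alternatively, we can use the Lipschitz part of $L$. The functions $f_{\vert_{K_i}}$ with $L^{\cK}_{\rm Lip}\le 1$ lie in an equicontinuous family on the compact sets $K_i$. Combined with $\Vert f_{\vert_{K_i}}\Vert \lesssim 1/i$, this should give total boundedness after passing through the finite-dimensional $A_n$. I would then invoke the standard total-boundedness criterion instead of (1) if the tail sum does not converge. I expect this tail estimate to be the crux of the argument.
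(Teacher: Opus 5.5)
Your framework coincides with the paper's. You apply (1)$\Rightarrow$(2) of \cref{thm:first-intro-theorem} with $\phi$ the indicator of an $\ell$-ball. By the ultrametric inequality this ball is a clopen compact subgroupoid, so $m_\phi$ is a unital contractive conditional expectation that is $\cK$-continuous with coefficient $1$.

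There are two slips, both harmless here. First, $\ell^{-1}([0,n])$ is $\G_k$ for the largest level $k$ whose total path count is at most $n$, not $\G_n$; cf.\ \cref{remark:Gn-is-ball}. Second, $\Cred(\G_k)\supseteq C(\Go)$ is not the finite-dimensional algebra $A_k$. Compactly supported elements are handled by \cref{lemma:compact-subsets-yield-CQMS}, not by finite-dimensionality.

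However, the whole content of the theorem is the uniform tail estimate $\sup_{f\in E}\Vert f-m_\phi(f)\Vert\to 0$. You leave this open, and none of your sketched routes can deliver it.
\begin{itemize}
\item The identity $[D_\ell,\Lambda(f_{\vert_{K_i}})]=i\,\Lambda(f_{\vert_{K_i}})$ is false. By \cref{prop:iterated-commutator-adjointable} the kernel is $(\ell(\gamma)-\ell(\mu))f(\gamma\mu^{-1})$, and by the ultrametric inequality this vanishes whenever $\ell(\mu)>\ell(\gamma\mu^{-1})$.
\item Even granting $\Vert f_{\vert_{K_i}}\Vert\lesssim i^{-1}$, the values of $\ell$ can grow linearly, so the tail sum diverges. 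For example, take one vertex $a_0$ at level $0$ and vertices $a_k,b_k$ at level $k\geq 1$, with edges $a_{k-1}\to a_k$, $a_{k-1}\to b_k$, $b_{k-1}\to b_k$. The path counts are $k+1$.
\item The Lipschitz part gives no decay in $i$ at all.
\item Falling back on the total-boundedness criterion is circular. Your $\phi_n$ satisfy the hypotheses of the converse in \cref{thm:first-intro-theorem}, so total boundedness is \emph{equivalent} to the tail estimate you are trying to avoid.
\end{itemize}

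The missing idea is to use the commutator only through its value on the unit section $1_{\Go}\in\E$. Since $\ell$ vanishes on $\Go$, \cref{prop:iterated-commutator-adjointable} gives $\delta(f)1_{\Go}=\ell\cdot f$. Hence $\sup_{y\in\Go}\sum_{\gamma\in\G_y}\ell(\gamma)^2\vert f(\gamma)\vert^2\le L_\ell(f)^2$, and the same holds for $f^*$. This weighted $\ell^2$ bound controls all strata simultaneously, which is exactly the information that per-stratum operator-norm bounds throw away.

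Next, bound $\Vert f-m_\phi(f)\Vert$ by the $I$-norm and apply Cauchy--Schwarz in each source fibre:
\[
\sum_{\gamma\in\G_y,\ \ell(\gamma)>M}\vert f(\gamma)\vert\le\Bigl(\sum_{\gamma\in\G_y,\ \ell(\gamma)>M}\ell(\gamma)^{-2}\Bigr)^{1/2}L_\ell(f).
\]
Everything then reduces to the uniform summability statement $\sup_{y}\sum_{\gamma\in\G_y,\ \ell(\gamma)>M}\ell(\gamma)^{-2}\to 0$ as $M\to\infty$. The paper proves this by counting paths in the augmented Bratteli diagram. It also follows by summation by parts from the linear growth bound of \cref{lemma:ellB-gives-linear-growth}, in the form $\vert B_\ell(t)\cap\G_y\vert\le t+1$, which gives a bound of at most $2/M+1/M^2$.

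Taking $\phi=1_{B_\ell(M)}$, or a ball $\G_m\supseteq B_\ell(M)$ as in the paper, this verifies condition (1).
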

	
	Any AF groupoid $\G$ with compact unit space comes equipped with a distinguished sequence of clopen compact principal subgroupoids $(\G_n)_n$. The indicator functions $\phi_n := 1_{\G_n} \in C_c(\G)$ are positive definite, and the associated multipliers are unital and completely positive. We will indeed use the multipliers $m_{\phi_n}$ to prove \cref{thm:second-intro-theorem}. 
	Moreover, for each $n \in \N$ we will find an operator system $\cX_n \subseteq C_c(\G_n)$, and will equip it with the restricted seminorm $L_n := L_{\vert_{\cX_n}}$. 
	The pair $(\cX_n, L_n)$ is a compact quantum metric space for every $n \in \N$, and in fact we show that as $n\to \infty$ the sequence of compact quantum metric spaces $(\cX_n, L_n)_{n\in N}$ converges to $(\Lip_c^\cK(\G), L)$  in the quantum Gromov--Hausdorff distance from \cite{Rieffel2004qGH},  
	see \cref{thm:CQMS-from-AF-groupoids}.

	The article is structured as follows. In \cref{sec:prelims} we collect results about groupoids, Fourier multipliers on groupoid $C^*$-algebras, and fundamental notions from the study of compact quantum metric spaces. 
	In \cref{sec:CQMS-from-groupoids} we introduce metric stratifications of groupoids. Then we show how these can be used to construct a compact quantum metric space from a groupoid with a compact metric space as unit space, and equipped with a  proper continuous length function.  In this section we also prove the first main result of the article, namely \cref{thm:new-characterization}. Lastly, in \cref{sec:AF-groupoids} we demonstrate the first examples of compact quantum metric spaces coming from groupoids by showing that they arise from AF groupoids in a natural way.

 \subsubsection*{Acknowledgments} I  wish to thank Jens Kaad,  Sam Kim, and David Kyed for valuable conversations about compact quantum metric spaces, as well as Eduard Ortega and Mathias Palmstrøm for enlightening discussions about length functions on AF groupoids. 
 I am also grateful to Alistair Miller for comments on a previous version of this article. 
 This research was funded by The Research Council of Norway [project 324944].

	\section{Preliminaries}\label{sec:prelims}
	
	\subsection{Groupoids}\label{sec:prelims-groupoids}
	\subsubsection*{\'Etale groupoids and groupoid actions}
	We first recall some basic material on \'etale groupoids. For the most basic constructions, we refer the reader to \cite{SimsSzaboWilliamsBook}. For actions of groupoids on spaces, we refer the reader to \cite{DelarocheExactGroupoidsV3}.
	
	Throughout the article, all groupoids will be  
	locally compact and Hausdorff. For such a groupoid $\G$, we denote its unit space by $\Go$, the composable pairs by  $\G^{(2)}$, and the range and source maps $r,s \colon \G \to \Go$ are given by $r(\gamma) = \gamma \gamma^{-1}$ and $s(\gamma) = \gamma^{-1}\gamma$, respectively, for $\gamma \in \G$. 
	We say that $\G$ is \'etale whenever the range map $r$ (and therefore also the source map $s$) is a local homeomorphism. 
	If $\G$ is \'etale and $\Go$ is a totally disconnected space, we say that $\G$ is \emph{ample}. 
	
	As is standard, we will for $u \in \Go$ denote by $\G_u = \{ \gamma \in \G \mid s(\gamma) = u \}$ and $\G^u = \{ \gamma \in \G \mid r(\gamma) = u \}$. The groupoid $\G$ is said to be \emph{principal} if $\G_u \cap \G^u = \{u\}$ for every $u \in \Go$.  Moreover, for $H \subseteq \G$, we write $H^{-1} = \{ \gamma^{-1} \in \G \mid \gamma \in H \}$.

	We will need to consider actions of \'etale groupoids on locally compact Hausdorff spaces. First, we say that a locally compact Hausdorff space $X$ is \emph{fibered over} $\Go$ if there is a continuous surjective map $p \colon X \to \Go$, and we say $(X, p)$ is a fiber space over $\Go$. Given two spaces fibered over $\Go$, say $(X_i, p_i)$, $i=1,2$, we may form the fiber product
	\begin{align*}
		X_1 \tensor[_{p_{1}}]{*}{_{p_{2}}} X_2 = \{ (x_1, x_2) \in X_1 \times X_2 \mid p_1(x_1) = p_2(x_2)  \} ,
	\end{align*}
	and we equip it with the relative topology from $X_1 \times X_2$. Note that any groupoid $\G$ is a fibered space over its unit space $\Go$ using either the source map or the range map.

	A \emph{left $\G$-space} is then a fiber space $(X, p)$ over $\Go$ together with a continuous map $(\gamma, x) \mapsto \gamma x$ from $\G \tensor[_s]{*}{_p}  X$ into $X$, satisfying
	\begin{enumerate}
		\item $p(\gamma x) = r(\gamma)$ for all $(\gamma, x) \in \G \tensor[_s]{*}{_p} X$, and $p(x)x = x$ for all $x \in X$;
		\item if $(\gamma, x) \in \G \tensor[_s]{*}{_p} X$ and $(\mu, \gamma)\in \G^{(2)}$, then $(\mu\gamma)x = \mu(\gamma x)$.
	\end{enumerate}
 
		Given a left $\G$-space $(X,p)$, we may construct a new locally compact Hausdorff \'etale groupoid $\G \ltimes X$. As a topological space, it is $\G \tensor[_s]{*}{_p} X$. It is equipped with a product
		\begin{align*}
			(\gamma, \mu  x) (\mu, x) = (\gamma \mu, x),
		\end{align*}
		and inverse
		\begin{align*}
			(\gamma,x)^{-1} = (\gamma^{-1}, \gamma  x).
		\end{align*}
		The resulting range and source maps are given by
		\begin{align*}
			r(\gamma, x) = \gamma  x \quad \text{and} \quad s(\gamma,x) = x.
		\end{align*}
		We identify $(\G \ltimes X)^{(0)}$ with $X$ through the map $(p(x), x) \mapsto x$.
		\begin{remark}\label{remark:left-right-groupoid-isomorphism}
			We will  find it most natural to work with $\G \ltimes X$. However, 
			\cite{DelarocheExactGroupoidsV3} prefers to work with another groupoid arising from a left $\G$-space $(X,p)$, namely $X \rtimes \G$. As a topological space, it is $X \tensor[_p]{*}{_r}\G$, equipped with the product
				\begin{align*}
						(x, \gamma)(\gamma^{-1}x, \mu) = (x, \gamma \mu)
					\end{align*}
				and inverse
				\begin{align*}
						(x, \gamma)^{-1} = (\gamma^{-1}x, \gamma^{-1}),
					\end{align*}
			and resulting range and source maps. $(X \rtimes \G)^{(0)}$ is then identified with $X$ through the map $(x, p(x)) \mapsto x$.  
			Note that there is  a straightforward groupoid isomorphism
			\begin{align*}
				X \rtimes \G &\to \G \ltimes X \\
				(x,\gamma) &\mapsto (\gamma, \gamma^{-1}x),
			\end{align*}
		 	making it easy to translate the results of  \cite{DelarocheExactGroupoidsV3} to our setting.
			 
		\end{remark}
	
	We shall be be especially interested in actions of $\G$ on a (fiberwise) compactification of $\G$. However, the goal of the article is to construct compact quantum metric spaces, and we will therefore have to deal with unital $C^*$-algebras. It is well-known that the algebras $\Cred(\G)$ and $C_c(\G)$ defined below are unital if and only if $\G$ is \'etale and $\Go$ is compact. The \emph{fiberwise Stone-\v{C}ech compactification} $\beta_r \G$ used extensively in \cite{DelarocheExactGroupoidsV3} will in this case therefore coincide with the  Stone-\v{C}ech compactification $\beta \G$, where $\beta \G$ is the usual spectrum of the commutative $C^*$-algebra $C_b (\G)$ of continuous bounded functions on $\G$ (equipped with pointwise product and with conjugation as involution), see \cite[pg. 127]{DelarocheExactGroupoidsV3}. 
	By the universal property of Stone-\v{C}ech compactifications the range map $r \colon \G \to \Go$ extends to a map $r_\beta \colon \beta \G \to \Go$. The pair $(\beta \G, r_\beta)$ is therefore a fiber space over $\Go$. There is also a natural left action of $\G$ on $\beta \G$ given by the continuous extension of the left action of $\G$ on itself. We may therefore consider the groupoid 
	$\G \ltimes \beta \G$, inside which $\G \tensor[_{s}]{*}{_r} \G$ sits as a dense open subspace.

	\subsubsection*{Groupoid algebras}
	Let $C_c(\G)$ denote the space of compactly supported continuous functions on $\G$. It becomes a $*$-algebra when equipped with the convolution product
	\begin{align*}
		f * g (\gamma) = \sum_{\mu \in \G_{s(\gamma)}} f(\gamma \mu^{-1}) g(\mu) = \sum_{\mu \in \G^{r(\gamma)}} f(\mu) g(\mu^{-1}\gamma) \quad \text{for $\gamma \in \G$ and } f,g \in C_c(\G) ,
	\end{align*}
	and involution
	\begin{align*}
		f^*(\gamma) = \overline{f(\gamma^{-1})} \quad \text{for $\gamma \in \G$ and $f \in C_c(\G)$.}
	\end{align*} 
	The \emph{$I$-norm} on $C_c(\G)$ is given by 
	\begin{align}\label{eq:def-I-norm}
		\Vert f \Vert_I :&= \max \left\{ \sup_{u \in \Go} \sum_{\gamma \in \G_u} \vert f(\gamma)\vert , \sup_{u \in \Go} \sum_{\gamma \in \G^u } \vert f(\gamma)\vert \right\}\\ 
		&=  \max \left\{ \sup_{u \in \Go} \sum_{\gamma \in \G_u} \vert f(\gamma)\vert , \sup_{u \in \Go}\sum_{\gamma \in \G_u } \vert f^*(\gamma)\vert \right\}. \notag
	\end{align}
	We will be interested in a $C^*$-completion of $C_c(\G)$, namely the reduced $C^*$-algebra $\Cred(\G)$. In order to construct it, we will need to consider the left regular representation, and we will take the Hilbert $C^*$-module point of view. While we will later need some results from \cite{DelarocheExactGroupoidsV3}, our convention will slightly differ. 
	In \cite{DelarocheExactGroupoidsV3}, the regular representation is implemented using right convolution, and the Hilbert $C^*$-module is constructed using the Hilbert spaces arising as range fibers of the groupoid. However, as this article concerns compact quantum metric spaces, it will be easier to relate our setup and results to the existing literature for groups, see for example  \cite{ChristRieffel, OzawaRieffel2004, ChristensenIvanRD}, if we use the left regular representation on a Hilbert $C^*$-module constructed from the Hilbert spaces arising from the source fibers of the groupoid. These ways of constructing the reduced $C^*$-algebra are equivalent to one another (see \cite[Proposition 8]{PatersonFourierAlgebra2004} and \cite[pg. 81-82]{DelarocheExactGroupoidsV3}), and any result we need from \cite{DelarocheExactGroupoidsV3} will translate to our setting by changing from range fibers to source fibers, and from right convolution to left convolution. Our choice of convention therefore explains the departures from the results from \cite{DelarocheExactGroupoidsV3} in the sequel.  
	We refer the reader
	to \cite{LanceCstarModulesBook} for details on Hilbert $C^*$-modules.
	For a (right) Hilbert $C^*$-module $X$ over a $C^*$-algebra $A$, we denote the $A$-adjointable operators on $X$ by $\mathcal{L}_A (X)$. 
	Fix an \'etale groupoid $\G$ with compact unit space $\Go$. 
	We will denote by $\E$ the right Hilbert $C(\Go)$-module obtained by completing $C_c (\G)$ with respect to the norm coming from the $C (\Go)$-valued inner product
	\begin{align*}
		\langle \xi , \eta \rangle_{C (\Go)} (u) = \sum_{\mu \in \G_u} \overline{\xi(\mu)} \eta(\mu), \quad \xi, \eta \in C_c (\G).
	\end{align*}
	The right $C (\Go)$-module structure on $\E$ is given by 
	\begin{align*}
		(\xi \cdot f)(\gamma) = \xi(\gamma) f(s(\gamma)), \quad \xi \in C_c(\G), f\in C(\Go).
	\end{align*}
	Note that $\E$ is the space of continuous sections of the continuous field of Hilbert spaces with fibers given by $\ell^2(\G_u)$, $u \in \Go$. 
	We then have a left action of $C_c (\G)$ on $\E$ through convolution defined in the following manner
	\begin{align}\label{eq:left-regular-representation}
		(\Lambda (f) \xi) (\gamma) = \sum_{\mu \in \G_{s(\gamma)}} f(\gamma\mu^{-1}) \xi(\mu)
	\end{align}
	The map $\Lambda$ extends to a faithful $*$-representation of $C_c(\G)$ as $C(\Go)$-adjointable operators on $\E$, that is, we obtain an injective $*$-homomorphism $C_c (\G)\to \mathcal{L}_{C(\Go)}(\E)$. Its range is denoted by $\Cred (\G)$ and is known as the \emph{reduced $C^*$-algebra of $\G$.} When $\G$ is \'etale and $\Go$ is compact, both $C_c(\G)$ and $\Cred(\G)$ are unital. 
	We note that the $I$-norm dominates the reduced $C^*$-algebra norm, so we have the inequality 
	\begin{align*}
		\Vert \Lambda (f) \Vert_{\mathcal{L}_{C(\Go)}(\E)} \leq \Vert f \Vert_I 
	\end{align*}
	for all $f \in C_c(\G)$. 

	Furthermore, since $\Go$ is compact we have an action $\tilde{\Lambda}$ of 
	$\Cred(\G \ltimes \beta\G)$ 
	on $\E$ through $C(\Go)$-adjointable operators given by
	\begin{align}\label{eq:action-of-Roe-alg-on-E}
		\tilde{\Lambda}(F)(\xi) (\gamma) = \sum_{\mu \in \G_{s(\gamma)}} \rho(F)( \gamma \mu^{-1}, \mu) \xi(\mu)
	\end{align}
	for $F \in \Cred(\G \ltimes \beta \G)$, 
	$\xi \in \E$, and where $\rho(F)$ is the restriction of $F$ to the dense subset. 
	$\G \tensor[_s]{*}{_r} \G \subseteq  \G \ltimes \beta\G$. We will in the sequel tend to suppress $\rho$ from the notation. 
	In other words, we have a $*$-homomorphism $\tilde{\Lambda} \colon \Cred (\G \ltimes \beta \G ) \to \curlyL_{C(\Go)} (\E)$. The action in \eqref{eq:action-of-Roe-alg-on-E} is compatible with the $*$-algebra inclusion \cite[Lemma 7.5]{DelarocheExactGroupoidsV3}
	\begin{align}\label{eq:inclusion-into-Roe}
		\iota \colon C_c (\G) \hookrightarrow C_c ( \G \ltimes \beta\G), \quad \iota(f)(\gamma, \mu) = f(\gamma)
	\end{align}
	in the sense that $\tilde{\Lambda}\circ \iota (f) = \Lambda (f)$ for all $f \in C_c(\G)$. The map $\iota$ extends to an inclusion $\iota \colon \Cred(\G) \to \Cred(\G \ltimes \beta \G)$.

	When $\G$ is \'etale and $\Go$ is compact, there is always a faithful conditional expectation $P \colon \Cred (\G) \to C (\Go)$ given by restriction of functions, that is
	\begin{align}\label{eq:existence-cond-exp}
		P(f) = f_{\vert_{\Go}} \quad f \in \Cred (\G).
	\end{align}
	Through this conditional expectation we have a way of constructing states on $\Cred(\G)$. More specifically, given any probability measure $\mu$ on $\Go$, we may define a state $\overline{\mu} \in S(C(\Go))$ given by integration against the measure $\mu$. Precomposing this state with the conditional expectation gives a state $\phi_\mu = \overline{\mu} \circ P \in S(\Cred(\G))$.

	Lastly in this section, we remind the reader about the notions of polynomial growth and rapid decay for groupoids. To do so we must define length functions.
	\begin{definition}\label{def:length-function}
		Let $\G$ be an \'etale groupoid. By a \emph{continuous length function on $\G$} we will mean a continuous map $\ell \colon \G \to [0,\infty)$ satisfying
		\begin{enumerate}
			\item $\ell(\gamma) = 0$ if and only if $\gamma \in \Go$,
			\item $\ell(\gamma^{-1}) = \ell(\gamma)$ for all $\gamma \in \G$,
			\item $\ell(\gamma \mu) \leq \ell(\gamma) + \ell(\mu)$ for all $(\gamma, \mu) \in \G^{(2)}$. 
		\end{enumerate}
		The length function is said to be \emph{proper} if the set $\ell^{-1}([0,r])$ is compact for every $r\geq 0$. 
		
		For $t \geq 0$ we define the closed $\ell$-ball of radius $t$ by $B_\ell(t) := \{ \gamma \in \G \mid \ell(\gamma) \leq t\}$. 
	\end{definition}
	Let $\G$ be an \'etale groupoid equipped with a proper continuous length function $\ell$.  
	The
	pair $(\G, \ell)$ 
	will be called 
	a \emph{metric groupoid}. We introduce the following norm on $C_c (\G)$:
	\begin{align*}
		\Vert f \Vert_{2,s} :&= \max \{ \sup_{u \in \Go} \big( \sum_{\gamma \in \G_u} \vert f(\gamma) \vert^2 (1 + \ell(\gamma))^{2s} \big))^{1/2}, \sup_{u \in \Go} \big( \sum_{\gamma \in \G^u} \vert f(\gamma) \vert^2 (1 + \ell(\gamma))^{2s} \big)^{1/2}  \} \\
		&= \max \{ \sup_{u \in \Go} \big( \sum_{\gamma \in \G_u} \vert f(\gamma) \vert^2 (1 + \ell(\gamma))^{2s} \big)^{1/2}, \sup_{u \in \Go} \big( \sum_{\gamma \in \G_u} \vert f^*(\gamma) \vert^2 (1 + \ell(\gamma))^{2s} \big)^{1/2}  \}.
	\end{align*}
	We then say that $(\G, \ell)$ has \emph{rapid decay} if there exist constants $C, s \geq 0$ such that
	\begin{align*}
		\Vert \Lambda (f) \Vert \leq C \Vert f \Vert_{2,s}
	\end{align*}
	for all $f \in C_c (\G)$. 
	
	Examples of metric groupoids with rapid decay are those with polynomial growth \cite[Proposition 3.5]{HouSpectral2017}. We say $(\G, \ell)$ has \emph{polynomial growth} if there is a polynomial $p \colon \R \to \R$ such that
	\begin{align}\label{eq:poly-growth-criterion}
		\sup_{u \in \Go} \vert \{ \gamma \in \G_u \mid \ell(\gamma)\leq t    \}  \vert = \sup_{u \in \Go} \vert B_\ell(t) \cap \G_u \vert \leq p(t) 
	\end{align}
	for all $t \geq 0$. 
	Indeed, by redoing the proof of \cite[Proposition 3.5]{HouSpectral2017} using $p = r + \frac{1}{2} + \varepsilon$ for any $\varepsilon >0$, rather than $p = r+3$, we obtain the following result.
	
	\begin{proposition}\label{prop:degree-of-RD-from-PG}
		Let $(\G,\ell)$ be a metric groupoid, and suppose it has  polynomial growth bounded by some polynomial $p$ of degree $\leq d$. 
		Then for any $s > d + \frac{1}{2}$ there exists $C=C(s)$ such that $(\G, \ell)$ has rapid decay with constants $C,s \geq 0$.
	\end{proposition}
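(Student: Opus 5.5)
The plan is to bound $\Vert \Lambda(f)\Vert$ by a weighted $\ell^1$-type quantity and then apply Cauchy--Schwarz fibrewise. The starting point is the $I$-norm estimate $\Vert \Lambda(f)\Vert \leq \Vert f\Vert_I$. Fix $u \in \Go$ and $s > d + \frac12$. Inserting the weight $(1+\ell(\gamma))^{s}(1+\ell(\gamma))^{-s}$ and applying Cauchy--Schwarz on $\G_u$ gives
\begin{align*}
\sum_{\gamma\in\G_u}\vert f(\gamma)\vert \leq \Big(\sum_{\gamma\in\G_u}\vert f(\gamma)\vert^2(1+\ell(\gamma))^{2s}\Big)^{1/2}\Big(\sum_{\gamma\in\G_u}(1+\ell(\gamma))^{-2s}\Big)^{1/2}.
\end{align*}
The same estimate holds on $\G^u$. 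The key point is that the map $\gamma\mapsto\gamma^{-1}$ is a bijection $\G^u\to\G_u$ with $\ell(\gamma^{-1})=\ell(\gamma)$, so $\vert B_\ell(t)\cap\G^u\vert = \vert B_\ell(t)\cap \G_u\vert\leq p(t)$. Taking suprema over $u$ then yields
\begin{align*}
\Vert \Lambda(f)\Vert \leq \Vert f\Vert_I \leq C(s)\,\Vert f\Vert_{2,s}, \qquad C(s)^2 := \sup_{u\in\Go}\sum_{\gamma\in\G_u}(1+\ell(\gamma))^{-2s},
\end{align*}
so it remains only to show that $C(s)<\infty$.

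To control $\sum_{\gamma\in\G_u}(1+\ell(\gamma))^{-2s}$ uniformly in $u$, I would decompose $\G_u$ into annuli $A_k := \{\gamma\in\G_u \mid k-1<\ell(\gamma)\leq k\}$ for $k\geq 1$, together with $A_0 := \G_u\cap B_\ell(0) = \{u\}$. On $A_k$ with $k \geq 1$ we have $(1+\ell(\gamma))^{-2s}\leq k^{-2s}$, and $\vert A_k\vert \leq \vert B_\ell(k)\cap\G_u\vert \leq p(k)$. Since $\deg p\leq d$, there is a constant $M$ with $p(k)\leq M(1+k)^{d}$ for all $k\geq 0$. Hence
\begin{align*}
\sum_{\gamma\in\G_u}(1+\ell(\gamma))^{-2s} \leq 1 + M\sum_{k\geq 1} (1+k)^{d}k^{-2s},
\end{align*}
a bound independent of $u$. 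The series converges precisely when $2s-d>1$. Note that $2s - d > d + 1 \geq 1$ when $s> d+\frac12$ (with $d \geq 0$), so convergence holds.

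This is exactly Hou's argument with a sharper exponent: the only place $s$ enters is the summability of $k^{d-2s}$. Hou's version with $s = d + 3$ is comfortably overkill, while any $s > d+\frac12$ suffices. I also note that each ball is finite, because properness together with étaleness makes $\G_u\cap B_\ell(t)$ a discrete subset of a compact set. This finiteness is in any case already built into the hypothesis \eqref{eq:poly-growth-criterion}.

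The main obstacle is mild. One must check that the polynomial bound transfers from source fibres to range fibres, which the inversion symmetry of $\ell$ handles. One must also make sure the uniform constant $M$ exists, which requires $p$ to be bounded below by something positive on $[0,\infty)$ for the comparison. Both points are routine.
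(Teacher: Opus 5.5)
Your proof is correct and follows the argument the paper has in mind: the paper just reruns Hou's proof of \cite[Proposition 3.5]{HouSpectral2017} with a smaller exponent, and that proof is your bound $\Vert \Lambda(f)\Vert \leq \Vert f\Vert_I$, fibrewise Cauchy--Schwarz against $(1+\ell)^{-s}$, and summation of $(1+\ell)^{-2s}$ over annuli using the growth bound. Your computation actually only needs $2s-d>1$, i.e.\ $s>\frac{d+1}{2}$; and the closing remark about $p$ being bounded below is unnecessary, since any polynomial of degree at most $d$ satisfies $p(k)\leq M(1+k)^d$ on $[0,\infty)$ with $M$ the sum of the absolute values of its coefficients.
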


	\begin{assumption}\label{assumption:uniform-fiber-cardinality-bound}
		We will assume that any \'etale groupoid and proper continuous length function considered in this article satisfy that for every $t \geq 0$ there is $F_t \geq 1$ such that
		\begin{align*}
			\sup_{u \in \Go} \vert B_\ell(t) \cap \G_u \vert \leq F_t.
		\end{align*}
		This can be seen to hold true if $(\G,\ell)$ has polynomial growth. Moreover, for discrete groups it is implied by properness of the length function $\ell$. Thus it will also be true for transformation groupoids when using the proper continuous length function from \eqref{eq:length-function-transformation-groupoid}.
	\end{assumption}
	
	\subsection{Multipliers of reduced groupoid $C^*$-algebras} \label{sec:prelims-multipliers}
	In our characterization of metric groupoids yielding compact quantum metric spaces (in a manner which will be explained in \cref{sec:CQMS-from-groupoids}) we will use Fourier multipliers on reduced groupoid $C^*$-algebras. Crucial material is developed and covered in the article \cite{BussKwasniewskiMcKeeSkalski2024}. However, there it is stated in the very general setting of multipliers for twisted groupoid actions and the associated reduced section $C^*$-algebras. For the reader's convenience,   
	we rephrase the results we will need in the setting of reduced groupoid $C^*$-algebras, that is, without the added complexity of twisted actions. 
	We first introduce the notion of a left Hilbert $\G$-bundle for an \'etale groupoid $\G$.  
	Fix an \'etale groupoid $\G$, and suppose moreover that $\Go$ is compact. Let $\curlyH$ be a continuous Hilbert bundle over $\Go$, that is, a continuous field of Hilbert spaces over $\Go$. Denote by $\curlyH_u$ the fiber over $u \in \Go$ and by $\Vert \cdot \Vert_{\curlyH_u}$ the associated norm. 
	By $\Gamma (\curlyH)$ we will mean the continuous bounded sections of $\curlyH$. Then $\Gamma(\curlyH)$ is a Banach space with the norm 
	$\Vert \xi \Vert_{\Gamma(\curlyH)} = \sup_{u \in \Go} \Vert \xi(u) \Vert_{\curlyH_u}$ for $\xi \in \Gamma(\curlyH)$. 
	We say $(\curlyH, L)$ is a $\G$-Hilbert bundle if for each $\gamma \in \G$ there is a linear invertible isometry $L_\gamma\colon \curlyH_{s(\gamma )} \to \curlyH_{r(\gamma)}$, such that for all $\xi, \zeta \in \Gamma(\curlyH)$ the map $\gamma \mapsto \langle L_\gamma \xi(s(\gamma)), \zeta(r(\gamma))\rangle$ is continuous, and such that $L \colon \gamma \mapsto L_\gamma$ is a groupoid homomorphism from $\G $ to the isomorphism groupoid of $\coprod_{u \in \Go} \curlyH_u$.

	With  
	$\G$ as above, for each $\G$-Hilbert bundle $(\curlyH, L)$ and sections $\xi, \zeta \in \Gamma(\curlyH)$, we obtain a function
	\begin{align*}
		C_{\curlyH, L, \xi, \zeta} \colon \G \ni \gamma \mapsto \langle L_\gamma \xi(s(\gamma)), \zeta(r(\gamma))\rangle \in \C .
	\end{align*}
	We denote by $\FS(\G)$ the collection of all $C_{\curlyH, L, \xi, \zeta}$ as we vary over all $\G$-Hilbert bundles $(\curlyH, L)$ and sections $\xi, \zeta \in \Gamma(\curlyH)$. $\FS(\G)$ are the \emph{Fourier-Stieltjes coefficients} of $\G$. We may also equip $\FS(\G)$ with a norm
	\begin{align*}
		\Vert C \Vert_{\FS(\G)} := \inf \{ \Vert \xi \Vert_{\Gamma(\curlyH)} \Vert \zeta \Vert_{\Gamma(\curlyH)} \mid C = C_{\curlyH, L, \xi, \zeta} \text{ for some $(\curlyH, L)$ and $\xi,\zeta \in \Gamma(\curlyH)$} \}.
	\end{align*}
	With this norm, $\FS(\G)$ becomes a Banach space \cite[Proposition 7.10]{BussKwasniewskiMcKeeSkalski2024}. 
	The following result is \cite[Theorem 7.13]{BussKwasniewskiMcKeeSkalski2024} specialized to our setting.
	
	\begin{proposition}\label{prop:FS-yields-cb-maps}
		Any $C \in \FS(\G)$ gives rise to a completely bounded map $m_C \colon \Cred (\G) \to \Cred(\G)$ given by $m_C(f) = C \cdot f$, where $(C \cdot f)(\gamma) = C(\gamma)f(\gamma)$ for $f \in \Cred (\G)$. Moreover, the \emph{completely bounded multiplier norm} $\Vert \cdot \Vert_{\rm cb}$ satisfies $\Vert m_C \Vert_{\mathrm{cb}} \leq \Vert C \Vert_{\FS(\G)}$. If $C = C_{\curlyH, L, \xi, \xi}$, then $m_C$ is completely positive and $\Vert m_C \Vert_{\mathrm{cb}} = \Vert C \Vert_{\FS(\G)} =  \Vert \xi \Vert_{\Gamma(\curlyH)}^2$. 
	\end{proposition}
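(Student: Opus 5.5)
The plan is to prove a Stinespring-type factorisation of $m_C$ through an amplified regular representation, using a Fell-absorption trick to see that this amplified representation extends to $\Cred(\G)$. Fix a $\G$-Hilbert bundle $(\curlyH,L)$, sections $\xi,\zeta\in\Gamma(\curlyH)$, and set $C=C_{\curlyH,L,\xi,\zeta}$.

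\emph{The amplified module and representation.} Let $\E_\curlyH$ be the right Hilbert $C(\Go)$-module obtained by completing the compactly supported continuous sections $\eta$ with $\eta(\gamma)\in\curlyH_{r(\gamma)}$. The inner product is
\begin{align*}
\langle \eta,\eta'\rangle(u)=\sum_{\gamma\in\G_u}\langle \eta(\gamma),\eta'(\gamma)\rangle_{\curlyH_{r(\gamma)}}.
\end{align*}
On $\E_\curlyH$ let $C_c(\G)$ act by the twisted left convolution
\begin{align*}
(\Lambda^\curlyH(f)\eta)(\gamma)=\sum_{\mu\in\G_{s(\gamma)}} f(\gamma\mu^{-1})\,L_{\gamma\mu^{-1}}\eta(\mu).
\end{align*}

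\emph{Step 1: $\Lambda^\curlyH$ extends to $\Cred(\G)$ (Fell absorption).} Define $W\eta(\gamma)=L_\gamma^{-1}\eta(\gamma)\in\curlyH_{s(\gamma)}$. Since each $L_\gamma$ is a unitary and $L$ is a groupoid homomorphism, $W$ is an isometric isomorphism from $\E_\curlyH$ onto the internal tensor product $\E\otimes_{C(\Go)}\Gamma(\curlyH)$, realised as sections with values in $\curlyH_{s(\gamma)}$. A direct computation gives
\begin{align*}
W\Lambda^\curlyH(f)W^{*}=\Lambda(f)\otimes 1 .
\end{align*}
Hence $\|\Lambda^\curlyH(f)\|\le\|\Lambda(f)\|$, and $\Lambda^\curlyH$ extends to a $*$-homomorphism on $\Cred(\G)$.

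\emph{Step 2: the factorisation.} Define $V_\xi\colon\E\to\E_\curlyH$ by $(V_\xi\eta)(\gamma)=\eta(\gamma)\,\xi(r(\gamma))$, and $V_\zeta$ likewise. Then $\|V_\xi\|\le\|\xi\|_{\Gamma(\curlyH)}$, and $V_\zeta^*$ is given fibrewise by $\eta\mapsto\langle\eta(\cdot),\zeta(r(\cdot))\rangle$. Using $r(\mu)=s(\gamma\mu^{-1})$, a computation gives
\begin{align*}
(V_\zeta^*\Lambda^\curlyH(f)V_\xi\eta)(\gamma)=\sum_{\mu\in\G_{s(\gamma)}} f(\gamma\mu^{-1})\,C(\gamma\mu^{-1})\,\eta(\mu)=(\Lambda(C\cdot f)\eta)(\gamma).
\end{align*}
Therefore
\begin{align*}
m_C=V_\zeta^*\,\Lambda^\curlyH(\cdot)\,V_\xi .
\end{align*}
This identity first shows that $C\cdot f\in\Cred(\G)$ with $m_C$ well defined on $\Cred(\G)$, by density and continuity. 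It then gives
\begin{align*}
\|m_C\|_{\rm cb}\le\|\xi\|_{\Gamma(\curlyH)}\|\zeta\|_{\Gamma(\curlyH)}.
\end{align*}
Taking the infimum over all representations of $C$ yields $\|m_C\|_{\rm cb}\le\|C\|_{\FS(\G)}$. The extension of $C\cdot f$ beyond $C_c(\G)$ is understood via the pointwise formula for elements of $\Cred(\G)$, viewed as $C_0$ functions on $\G$.

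\emph{Step 3: the case $\xi=\zeta$.} Here $m_C=V_\xi^*\Lambda^\curlyH(\cdot)V_\xi$ is completely positive. Hence
\begin{align*}
\|m_C\|_{\rm cb}=\|m_C(1)\|=\sup_{u}\|\xi(u)\|^2=\|\xi\|_{\Gamma(\curlyH)}^2,
\end{align*}
because $m_C(1)=C|_{\Go}$ and $C(u)=\|\xi(u)\|^2$. Combining this with $\|m_C\|_{\rm cb}\le\|C\|_{\FS}\le\|\xi\|^2$ gives equality throughout.

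\emph{Main obstacle.} The substantive step is Step 1. One must check that $W$ maps continuous sections to continuous sections, using the continuity of $\gamma\mapsto\langle L_\gamma\xi(s(\gamma)),\zeta(r(\gamma))\rangle$ together with local triviality arguments on bisections. One must also check that $W$ identifies $\E_\curlyH$ with $\E\otimes_{C(\Go)}\Gamma(\curlyH)$ as Hilbert modules, and that $V_\xi$ is adjointable. I would verify all of this on bisection-supported functions, where the sums are finite, and then pass to completions.
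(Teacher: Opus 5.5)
The paper gives no argument of its own here. It quotes the statement as the untwisted special case of \cite[Theorem 7.13]{BussKwasniewskiMcKeeSkalski2024}, where it is proved within a general Fourier--Stieltjes framework for twisted groupoid actions.

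Your proposal is a correct, self-contained proof of this special case, and it is a genuinely different route in the sense that it avoids that machinery. It is essentially the concrete Fell-absorption-plus-compression argument.
\begin{itemize}
\item The conjugation $W\Lambda^{\curlyH}(f)W^*=\Lambda(f)\otimes 1$ makes $\Lambda^{\curlyH}$ a $*$-representation of $\Cred(\G)$.
\item The compression $m_C=V_\zeta^*\Lambda^{\curlyH}(\cdot)V_\xi$ gives $\Vert m_C\Vert_{\rm cb}\le\Vert\xi\Vert\,\Vert\zeta\Vert$, and by density it shows that $m_C$ maps $\Cred(\G)$ into itself.
\item For $\xi=\zeta$, the chain $\Vert\xi\Vert^2=\Vert m_C(1)\Vert\le\Vert m_C\Vert_{\rm cb}\le\Vert C\Vert_{\FS(\G)}\le\Vert\xi\Vert^2$ yields all the stated equalities.
\end{itemize}

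The checks you flag as the main obstacle are routine, and they need no local triviality of $\curlyH$ (a continuous field need not be locally trivial). Since $\Vert L_\gamma h(s(\gamma))\Vert=\Vert h(s(\gamma))\Vert$ is continuous, weak continuity of $L$ already makes $\gamma\mapsto L_\gamma h(s(\gamma))$ a continuous section of $r^*\curlyH$. The same reasoning shows that $W^{\pm 1}$ preserve continuous compactly supported sections.

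One cosmetic point: fix a single linearity convention so that your module inner product matches your formula for $V_\zeta^*$. If $\langle\cdot,\cdot\rangle_{\curlyH}$ is linear in the first slot, the module inner product should read $\sum_{\gamma\in\G_u}\langle\eta'(\gamma),\eta(\gamma)\rangle$.

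What each route buys:
\begin{itemize}
\item The citation gives the result for twisted groupoid actions. It also comes packaged with the companion lifting result \cite[Proposition 7.12]{BussKwasniewskiMcKeeSkalski2024}, which the paper uses for $T^\phi$.
\item Your explicit factorization makes complete positivity and the norm identity transparent. It also adapts verbatim to the operators $\tilde{\Lambda}(F)$ on $\E$: replace $f(\gamma\mu^{-1})$ by $F(\gamma\mu^{-1},\mu)$. This would give the lifted multiplier on the image of $\Cred(\G\ltimes\beta\G)$ in $\curlyL_{C(\Go)}(\E)$, with the same bound $\Vert\xi\Vert\,\Vert\zeta\Vert$.
\end{itemize}
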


\begin{remark}\label{remark:multiplier-norm}
	Some results in \cref{sec:CQMS-from-groupoids} can be stated slightly more generally if we use the multiplier norm rather than the completely bounded multiplier norm. So if $C \in \FS(\G)$, we will let $\Vert m_C \Vert$ denote the norm of the Fourier multiplier $m_C \colon \Cred(\G) \to \Cred (\G)$ from \cref{prop:FS-yields-cb-maps}. That is, we do not consider the matrix amplifications to define $\Vert m_C \Vert$. By \cref{prop:FS-yields-cb-maps} we have $\Vert m_C \Vert \leq \Vert m_C \Vert_{\rm cb} \leq \Vert C \Vert_{\FS(\G)}$ for all $C \in \FS(\G)$.  
\end{remark}
	
	We shall also be interested in the fact that positive definite functions on $\G$ give rise to completely positive multipliers on the reduced groupoid $C^*$-algebra. 
	A continuous function $\phi \in C(\G)$ is said to be \emph{positive definite} if for all $u \in \Go$ and all $f \in C_c (\G)$, we have
	\begin{align*}
		\sum_{\mu \in \G^u} \sum_{\gamma \in \G^u} \phi(\mu^{-1}\gamma) f(\mu) \overline{f(\gamma)} \geq 0.
	\end{align*}
	We denote by $P(\G)$ the set of all positive definite functions in $C(\G)$. 
	By \cite[pg. 205-206]{BrownOzawaBook}, $f^* * f \in P(\G)$ for all $f \in C_c(\G)$. 
	Moreover, by \cite[Theorem 1]{PatersonFourierAlgebra2004}, $\phi \in P(\G)$ if and only if $\phi = C_{\curlyH, L, \xi , \xi}$ for a $\G$-Hilbert bundle $(\curlyH , L)$ and a section $\xi \in \Gamma(\curlyH)$. 
	We record the following straightforward result based on \cite[Proposition 3.3]{OtyFourier-Stieltjes}  and its proof. Note that what we have defined to be $\FS(\G)$ corresponds to $B_1(\G)$ in \cite{OtyFourier-Stieltjes}, not $B(\G)$. As such the statement we need does not follow immediately from \cite[Proposition 3.3]{OtyFourier-Stieltjes}, but the proof is identical. 
	\begin{proposition}\label{prop:Cc-is-in-FS}
		Let $\G$ be an \'etale groupoid with compact unit space $\Go$. 
		Then $C_c(\G) \subseteq \FS(\G)$. 
	\end{proposition}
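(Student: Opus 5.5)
We show that every $\phi\in C_c(\G)$ is a finite linear combination of functions of the form $g^* * h$ with $g,h\in C_c(\G)$. Each such function is a coefficient of the left regular $\G$-Hilbert bundle, so it lies in $\FS(\G)$. Since $\FS(\G)$ is a vector space (indeed a Banach space by \cite[Proposition 7.10]{BussKwasniewskiMcKeeSkalski2024}), this gives $\phi\in\FS(\G)$.

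\emph{Step 1: the regular bundle.} Take the continuous Hilbert bundle $\curlyH$ over $\Go$ with fibres $\curlyH_u=\ell^2(\G^u)$, using range fibres. Its continuous sections are generated by $C_c(\G)$, where $\xi\in C_c(\G)$ gives the section $u\mapsto \xi|_{\G^u}$. For $\gamma\in\G$ define $L_\gamma\colon \ell^2(\G^{s(\gamma)})\to\ell^2(\G^{r(\gamma)})$ by $(L_\gamma\eta)(\mu)=\eta(\gamma^{-1}\mu)$. Each $L_\gamma$ is a unitary, and $L$ is a groupoid homomorphism. Continuity of $\gamma\mapsto\langle L_\gamma \xi(s(\gamma)),\zeta(r(\gamma))\rangle$ for $\xi,\zeta\in C_c(\G)$ follows from the computation in Step 2: this function equals a convolution of compactly supported continuous functions, which is continuous because $\G$ is étale.

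\emph{Step 2: the coefficient.} For $g,h\in C_c(\G)$ we compute
\begin{align*}
C_{\curlyH,L,h,g}(\gamma)=\sum_{\mu\in\G^{r(\gamma)}} h(\gamma^{-1}\mu)\overline{g(\mu)}.
\end{align*}
Up to conjugation and inversion conventions, this is $(g^* * h)$-type convolution evaluated at $\gamma$. One fixes conventions, e.g. by replacing $g$ by $\bar g$ or composing with inversion, so that exactly $\tilde g^* * \tilde h\in\FS(\G)$ for arbitrary $\tilde g,\tilde h\in C_c(\G)$.

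\emph{Step 3: factorization.} Let $\phi\in C_c(\G)$ and set $K=\supp\phi$. Cover $K$ by finitely many open bisections $U_1,\dots,U_n$ and choose a partition of unity to write $\phi=\sum_i\phi_i$ with $\supp\phi_i\subseteq U_i$ compact. Since $\Go$ is compact and open in $\G$, the constant function $1_{\Go}$ lies in $C_c(\G)$. Then
\begin{align*}
\phi_i = 1_{\Go}^* * \phi_i,
\end{align*}
because $1_{\Go}$ is the unit of $C_c(\G)$. So each $\phi_i$, and hence $\phi$, is of the form $g^* * h$ with $g=1_{\Go}$ and $h=\phi_i$. The partition of unity is therefore not even needed: $\phi=1_{\Go}^* * \phi$ directly.

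The main obstacle is Step 1: verifying that the regular bundle is a genuine continuous $\G$-Hilbert bundle in the sense used in \cite{BussKwasniewskiMcKeeSkalski2024}, in particular the continuity requirements. This is the content of the proof of \cite[Proposition 3.3]{OtyFourier-Stieltjes}, which we follow. Alternatively, we can avoid it by using the fact that $f^* * f\in P(\G)$ \cite{BrownOzawaBook} together with the characterization $P(\G)=\{C_{\curlyH,L,\xi,\xi}\}$ \cite[Theorem 1]{PatersonFourierAlgebra2004}. Polarization then writes $g^* * h=\frac14\sum_{k=0}^3 i^k (h+i^k g)^* * (h+i^k g)$. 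With $g=1_{\Go}$, $h=\phi$, this expresses $\phi$ as a linear combination of positive definite functions in $C_c(\G)$, each lying in $\FS(\G)$. This alternative route is the one we will write out, since it uses only results already quoted in the paper.
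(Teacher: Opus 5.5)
Your proposal is correct. Your Steps 1--3 follow the paper's own proof. The paper adapts \cite[Proposition 3.3]{OtyFourier-Stieltjes} to get $f * g\in\FS(\G)$ for all $f,g\in C_c(\G)$, and then writes $f=f * 1_{\Go}$; your identity $\phi=1_{\Go}^{*} * \phi$ is the same unit trick.

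The route you actually write out differs only in how you show that convolution products lie in $\FS(\G)$. You do not realize $g^{*} * h$ as a single coefficient of the regular $\G$-Hilbert bundle. Instead you combine four ingredients:
\begin{enumerate}
\item the polarization identity $g^{*} * h=\frac14\sum_{k=0}^{3} i^{k}(h+i^{k}g)^{*} * (h+i^{k}g)$, which checks out;
\item positive definiteness of $f^{*} * f$ from \cite{BrownOzawaBook}, which one can also verify directly under the paper's convention;
\item the characterization of $P(\G)$ as diagonal coefficients $C_{\curlyH,L,\xi,\xi}$ from \cite[Theorem 1]{PatersonFourierAlgebra2004};
\item linearity of $\FS(\G)$.
\end{enumerate}

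The advantage of your route is that it uses only results the paper already quotes. You never have to check by hand the continuity axioms for the regular bundle; that work is absorbed into Paterson's theorem, which builds a bundle for each positive definite function.

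The paper's route instead exhibits $f * g$ directly as one coefficient $C_{\curlyH,L,\xi,\zeta}$ of the regular bundle. This gives a direct estimate of $\Vert f * g\Vert_{\FS(\G)}$ by section norms. Polarization writes $\phi$ as a combination of four diagonal coefficients with cruder constants. That difference is irrelevant here, since the proposition only asserts membership in $\FS(\G)$.
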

	\begin{proof}
		The proof of \cite[Proposition 3.3]{OtyFourier-Stieltjes} adapts straightforwardly to show that $f * g \in \FS(\G)$ for all $f,g \in C_c(\G)$. Then, since $\Go$ is compact, we have that $1_{\Go} \in C_c(\G)$, from which it follows that $f = f * 1_{\Go} \in \FS(\G)$. 
	\end{proof}
	
	Positive definite functions on \'etale groupoids can in fact be characterized through the resulting completely positive maps on the groupoid $C^*$-algebras, a fact we will use in \cref{sec:AF-groupoids}. 
	The following result is \cite[Theorem 8.5]{BussKwasniewskiMcKeeSkalski2024} specialized to our setting.
	
	\begin{proposition}\label{prop:pos-def-iff-cp-map}
		For a continuous function $\phi \in C(\G)$, the following are equivalent
		\begin{enumerate}
			\item $\phi$ is bounded and positive definite.
			\item $\phi$ induces a completely positive map $m_\phi \colon \Cred (\G) \to \Cred(\G)$ given by
			\begin{align*}
				m_\phi (f) (\gamma) = \phi(\gamma) f(\gamma)
			\end{align*}
			for $f \in C_c (\G)$. 
		\end{enumerate}
	\end{proposition}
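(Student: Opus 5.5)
The statement is \cref{prop:pos-def-iff-cp-map}, and the plan is to prove both directions using the Fourier–Stieltjes machinery above: Paterson's characterization of $P(\G)$, \cref{prop:FS-yields-cb-maps}, and the conditional expectation $P$ from \eqref{eq:existence-cond-exp}.

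\emph{(1) $\Rightarrow$ (2).} Let $\phi$ be bounded and positive definite.
By \cite[Theorem 1]{PatersonFourierAlgebra2004} we can write $\phi = C_{\curlyH,L,\xi,\xi}$ for some $\G$-Hilbert bundle $(\curlyH,L)$ and a section $\xi \in \Gamma(\curlyH)$.
Here boundedness is what guarantees that $\xi$ is a bounded section, with $\Vert\xi\Vert^2_{\Gamma(\curlyH)} = \sup_{u} \phi(u)$.
\cref{prop:FS-yields-cb-maps} then shows that $m_\phi$ is completely positive on $\Cred(\G)$.
On $C_c(\G)$ this map is given by pointwise multiplication, as required.

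\emph{(2) $\Rightarrow$ (1).} Assume $m_\phi$ extends to a completely positive map on $\Cred(\G)$.

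\textbf{Boundedness.} A completely positive map on a unital $C^*$-algebra is bounded with $\Vert m_\phi\Vert = \Vert m_\phi(1)\Vert$, and $m_\phi(1) = \phi_{\vert_{\Go}}$. So it suffices to control $\sup_\gamma|\phi(\gamma)|$.
\begin{itemize}
\item Fix $\gamma$ and take an open bisection $U \ni \gamma$ with a bump $f \in C_c(U)$, $0 \leq f \leq 1$, $f(\gamma) = 1$.
\item Complete positivity (in fact $2$-positivity) gives the Kadison–Schwarz type bound $m_\phi(f)^* m_\phi(f) \leq \Vert m_\phi(1)\Vert\, m_\phi(f^* f)$.
\item Apply the conditional expectation $P$ and evaluate at $s(\gamma)$. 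Because $U$ is a bisection, both sides reduce to single terms, and we get $|\phi(\gamma)|^2 \leq \Vert \phi_{\vert_{\Go}}\Vert_\infty\, \phi(s(\gamma))$.
\end{itemize}
Hence $\Vert\phi\Vert_\infty \leq \Vert \phi_{\vert_{\Go}}\Vert_\infty \leq \Vert m_\phi\Vert$.

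\textbf{Positive definiteness.} Fix $u \in \Go$ and $f \in C_c(\G)$. The plan is to realize the sum $\sum_{\mu,\gamma \in \G^u}\phi(\mu^{-1}\gamma) f(\mu)\overline{f(\gamma)}$ as a state applied to the image of a positive matrix under $m_\phi^{(n)}$.
\begin{itemize}
\item By a partition of unity, write $f$ as a finite sum of functions $f_1,\dots,f_n$ supported in open bisections $U_i$.
\item Form the positive matrix $[f_i^* * f_j]_{i,j} \in M_n(\Cred(\G))$.
\item Complete positivity gives $[m_\phi(f_i^* * f_j)]_{i,j} \geq 0$.
\item Apply the positive map $M_n(\Cred(\G)) \to \C$ given by $[a_{ij}] \mapsto \sum_{i,j} \operatorname{ev}_{u'} P(a_{ij})$ for suitable units $u'$; alternatively, compress to the fibre representation on $\ell^2(\G_u)$ and use vector states.
\end{itemize}
For bisection-supported functions, $f_i^* * f_j$ evaluated at $\mu^{-1}\gamma$ with $\mu \in U_i$ and $\gamma \in U_j$ is exactly $\overline{f_i(\mu)} f_j(\gamma)$. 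Summing over $i,j$ then recovers the required expression, up to the conjugation convention, which I would fix by using $f^*$ instead of $f$ or by passing to $\G_u$ via inversion.

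\textbf{Main obstacle.} The hardest step is the last bookkeeping: choosing the right vector in $\ell^2(\G_u)^n$ (or $\ell^2(\G^u)^n$) so that the positivity of $[m_\phi(f_i^*f_j)]$ yields precisely the double sum in the definition of $P(\G)$. This needs care with the range/source conventions, since the definition uses $\G^u$ while $\E$ is built from $\G_u$.

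My first attempt would be the vector $\eta = (\delta_{u})_{i}$ paired against the operators $\Lambda_u(m_\phi(f_i^* * f_j))$. Here $\Lambda_u$ denotes the fibre representation of $\Cred(\G)$ on $\ell^2(\G_u)$ coming from $\E$; I would define it explicitly in the proof, since the paper has not introduced it. Failing that, I would check the identity directly on indicator-type functions of bisections and extend by linearity and continuity. Alternatively, one could cite \cite[Theorem 8.5]{BussKwasniewskiMcKeeSkalski2024} for this direction.
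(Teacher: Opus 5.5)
Your direction (1)$\Rightarrow$(2), via Paterson's representation $\phi=C_{\curlyH,L,\xi,\xi}$ and \cref{prop:FS-yields-cb-maps}, is correct. So is your boundedness argument: Kadison--Schwarz, then $P$ and evaluation at $s(\gamma)$ for a bump on a bisection. The paper gives no argument at all and simply quotes \cite[Theorem 8.5]{BussKwasniewskiMcKeeSkalski2024}, so a direct proof would be your own route. As written, however, it has a gap exactly at the step you flagged. Neither functional you propose can work. $P$ discards every value off $\Go$. Pairing every matrix entry with one and the same state $\omega$ (for instance $\mathrm{ev}_{u'}\circ P$, or the vector state of $\delta_u$) collapses to $\omega\bigl(\sum_{i,j}m_\phi(f_i^* * f_j)\bigr)=\omega\bigl(m_\phi(f^* * f)\bigr)$. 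That uses only positivity of $m_\phi$, not complete positivity. With $\omega=\mathrm{ev}_u\circ P$ it just gives $\phi(u)\sum_{\nu\in\G_u}|f(\nu)|^2\geq 0$. This says nothing about the off-unit values $\phi(\mu^{-1}\gamma)$ that the definition of $P(\G)$ involves.

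The missing idea is to use different vectors in different coordinates. Let $\mu_i$ be the unique point of $U_i\cap\G^u$ and put $\eta_i=\delta_{\mu_i^{-1}}\in\ell^2(\G_u)$, with $\eta_i=0$ if $U_i\cap\G^u=\emptyset$. In the fibre representation, $(\Lambda_u(a)\delta_\beta)(\alpha)=a(\alpha\beta^{-1})$ for $\alpha,\beta\in\G_u$. For $g_i\in C_c(U_i)$ one has $(g_i^* * g_j)(\mu_i^{-1}\mu_j)=\overline{g_i(\mu_i)}\,g_j(\mu_j)$, as you noted. Therefore
\begin{align*}
0\leq\bigl\langle(\mathrm{id}_n\otimes\Lambda_u)\bigl([m_\phi(g_i^* * g_j)]_{i,j}\bigr)\eta,\eta\bigr\rangle=\sum_{i,j}\phi(\mu_i^{-1}\mu_j)\,\overline{g_i(\mu_i)}\,g_j(\mu_j).
\end{align*}
Apply this with $g_i=\overline{f_i}$, the pointwise conjugate (not $f_i^*$). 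This is legitimate because the condition quantifies over all of $C_c(\G)$. By bilinearity the right-hand side then equals exactly $\sum_{\mu,\gamma\in\G^u}\phi(\mu^{-1}\gamma)f(\mu)\overline{f(\gamma)}$. With this choice of $\eta$ your argument is complete.
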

	
	Lastly, it will be important for us that we can extend multipliers on the reduced $C^*$-algebra $\Cred (\G)$ to certain reduced $C^*$-algebras associated with actions of $\G$. In particular, if $m_\phi$ is a completely bounded  (resp. completely positive) multiplier on $\Cred (\G)$, we will want it to extend to a completely bounded (resp. completely positive) multiplier on  
	$\Cred ( \G \ltimes \beta\G)$.

	By \cite[Proposition 7.12]{BussKwasniewskiMcKeeSkalski2024}, there is an injective contractive map 
	$\FS(\G) \to \FS (\G \ltimes \beta\G)$,
	obtained by identifying 
	$\Cred ( \G \ltimes \beta\G)$ 
	with the groupoid reduced crossed product $C(\beta \G) \rtimes_{\mathrm{red}} \G$. 
	For $\phi \in \FS(\G)$ we denote the resulting multiplier on $\Cred(\G \ltimes \beta \G)$ by $T^\phi$, and it is given by
	\begin{align}\label{eq:Roe-algebra-lifted-multiplier}
		T^\phi(k) ( \gamma, \mu) = \phi(\gamma) k( \gamma, \mu)
	\end{align}
	for $(\gamma, \mu) \in \G \tensor[_s]{*}{_r} \G \subseteq \G \ltimes \beta \G$ and $k \in \Cred(\G \ltimes \beta \G)$. 
	The proof of \cite[Proposition 7.12]{BussKwasniewskiMcKeeSkalski2024} is done by tensoring by the trivial equivariant representation, therefore (in our setting) sending a Fourier-Stieltjes coefficient $C_{\curlyH, L, \xi, \zeta}$ for $\Cred (\G)$ to the Fourier-Stieltjes coefficient $C_{C(\beta \G) \otimes \curlyH , \alpha \otimes L, 1 \otimes \xi , 1 \otimes \zeta }$ for $C(\beta \G) \rtimes_{\mathrm{red}} \G$. Here $\alpha$ is the action of $\G$ on $C(\beta \G)$ induced by $\G \acts \beta\G$. We therefore see by \cref{prop:pos-def-iff-cp-map} that the map $\FS(\G) \to \FS (\G \ltimes \beta \G )$ sends positive definite functions to positive definite functions. 
	We record the following result.
	\begin{proposition}\label{prop:multipliers-lift-to-crossed-product}
		Let $\phi = C_{\curlyH, L, \xi, \zeta} \in \FS(\G)$. Then  
		${T^\phi} \colon \Cred ( \G \ltimes \beta\G) \to \Cred ( \G \ltimes \beta\G)$ 
		is completely bounded. If $\xi = \zeta$, then ${T^\phi}$ is completely positive. 
	\end{proposition}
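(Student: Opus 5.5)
The plan is to deduce both assertions from \cite[Proposition 7.12]{BussKwasniewskiMcKeeSkalski2024} and its proof, combined with \cref{prop:FS-yields-cb-maps} applied to the groupoid $\G \ltimes \beta\G$ in place of $\G$.

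First, we make the image of $\phi$ explicit. Write $\phi = C_{\curlyH, L, \xi, \zeta}$. As recalled above, the map $\FS(\G) \to \FS(\G \ltimes \beta\G)$ is obtained by tensoring with the trivial equivariant representation. It therefore sends $\phi$ to the coefficient
\begin{align*}
\tilde\phi := C_{C(\beta\G)\otimes \curlyH,\ \alpha\otimes L,\ 1\otimes\xi,\ 1\otimes\zeta}.
\end{align*}
Evaluating at $(\gamma,\mu) \in \G \tensor[_s]{*}{_r} \G$ gives
\begin{align*}
\tilde\phi(\gamma,\mu) = \langle L_\gamma \xi(s(\gamma)), \zeta(r(\gamma))\rangle = \phi(\gamma),
\end{align*}
because the $C(\beta\G)$-component contributes only $\langle \alpha_\gamma 1, 1\rangle = 1$. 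The bundle $C(\beta\G)\otimes\curlyH$ should be read as the Hilbert bundle over $\beta\G$ with fibers $\curlyH_{r_\beta(x)}$, as in the crossed-product identification $\Cred(\G\ltimes\beta\G)\cong C(\beta\G)\rtimes_{\rm red}\G$. By density of $\G \tensor[_s]{*}{_r} \G$ and continuity, $\tilde\phi$ is the unique continuous extension of $(\gamma,\mu)\mapsto\phi(\gamma)$. Hence the multiplier $m_{\tilde\phi}$ on $\Cred(\G\ltimes\beta\G)$ agrees with $T^\phi$ as defined in \eqref{eq:Roe-algebra-lifted-multiplier}. To check this, note that both act by pointwise multiplication on $C_c(\G\ltimes\beta\G)$, and elements of $\Cred$ are determined by their values as functions on the dense open subset.

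Second, we apply \cref{prop:FS-yields-cb-maps} to the étale groupoid $\G\ltimes\beta\G$, whose unit space $\beta\G$ is compact. This shows that $T^\phi = m_{\tilde\phi}$ is completely bounded, with $\Vert T^\phi\Vert_{\rm cb}\le \Vert\tilde\phi\Vert_{\FS}\le\Vert\phi\Vert_{\FS(\G)}$, where the last inequality uses contractivity of the map $\FS(\G)\to\FS(\G\ltimes\beta\G)$. If $\xi=\zeta$, then $\tilde\phi$ has the form $C_{\curlyH',L',\xi',\xi'}$ with $\xi' = 1\otimes\xi$, and the same proposition gives complete positivity. As an alternative for this case, $\tilde\phi$ is bounded and positive definite by \cite[Theorem 1]{PatersonFourierAlgebra2004}, so \cref{prop:pos-def-iff-cp-map} also applies.

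The main obstacle is the identification in the first step: verifying that the tensored coefficient is a genuine $\G\ltimes\beta\G$-Hilbert bundle coefficient, with continuity of the matrix coefficients, and that its values reduce to $\phi(\gamma)$. This requires unwinding the translation between $X\rtimes\G$ and $\G\ltimes X$ from \cref{remark:left-right-groupoid-isomorphism}, since the source of $(\gamma,\mu)$ is $\mu$ and the range is $\gamma\mu$. Once this bookkeeping is done, the remainder is a direct citation.
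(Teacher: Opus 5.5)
Your proposal is correct and matches the paper's justification. The paper also invokes \cite[Proposition 7.12]{BussKwasniewskiMcKeeSkalski2024} and notes that tensoring with the trivial equivariant representation sends $C_{\curlyH, L, \xi, \zeta}$ to $C_{C(\beta \G) \otimes \curlyH, \alpha \otimes L, 1 \otimes \xi, 1 \otimes \zeta}$; it then reads off complete boundedness and, for $\xi = \zeta$, complete positivity from \cref{prop:FS-yields-cb-maps} and \cref{prop:pos-def-iff-cp-map}. Your explicit evaluation of the lifted coefficient simply makes \eqref{eq:Roe-algebra-lifted-multiplier} precise, and no density argument is needed there, since the computation holds at every point of $\G \ltimes \beta\G$.
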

	
	\begin{remark}\label{remark:existence-and-boundedness-of-Tphi}
	Note in particular that when $\G$ is \'etale with compact unit space $\Go$, we have $C_c (\G) \subseteq \FS(\G)$ and we have an inclusion $\iota \colon C_c (\G) \hookrightarrow C_c(\G \ltimes \beta \G)$ as in \eqref{eq:inclusion-into-Roe}. Thus if $\phi \in C_c (\G)$, the extension $T^\phi$ exists by \cref{prop:Cc-is-in-FS}, and $\Vert T^\phi \Vert_{\rm cb} < \infty$. Ideally we would be able to bound $\Vert T^\phi \Vert_{\rm cb}$ by $\Vert m_\phi \Vert_{\rm cb}$. If $\phi \in C_c(\G)$ is bounded and completely positive, then it follows from \cref{prop:pos-def-iff-cp-map}, \cref{prop:FS-yields-cb-maps} and \cref{prop:multipliers-lift-to-crossed-product} that
	\begin{align*}
		\Vert T^\phi \Vert_{\rm cb} = \Vert \iota(\phi) \Vert_{\FS(\G \ltimes \beta \G)} \leq \Vert \phi \Vert_{\FS(\G)} = \Vert m_\phi \Vert_{\rm cb}.
	\end{align*}
	Furthermore, if $\G = \Gamma$ is a discrete group, then we could appeal to for example \cite[Corollary 4.7]{BedosConti2015} to deduce that $\Vert T^\phi \Vert_{\rm cb} \leq \Vert m_\phi \Vert_{\rm cb}$ for all $\phi \in C_c(\Gamma)$. The analogous result for \'etale groupoids is not known however, that is, for $\phi \in C_c(\G)$ it is not known if $\Vert T^\phi \Vert_{\rm cb} \leq \Vert m_{\phi}\Vert_{\rm cb}$.
\end{remark}

	\subsection{Compact quantum metric spaces}\label{sec:prelims-CQMS}
	In recent years, a number of techniques in the study of compact quantum metric spaces have been developed using the formalism of operator systems. 
	This is the approach to compact quantum metric spaces we will take in this article. 
	Let us first recall that
	an \emph{operator system} $\cX$ is a unital and $*$-invariant subspace of a unital $C^*$-algebra $A$. The operator system $\cX$ is said to be complete if it is closed in the $C^*$-norm of the ambient $C^*$-algebra $A$. We will often identify the scalars $\C$ with the subspace $\C \cdot 1 \subseteq \cX$ spanned by the unit in $\cX$. 
	An element $x \in \cX$ is positive if it is positive in the ambient $C^*$-algebra, and we define the state space $S(\cX)$ to be the positive, unital functionals on $\cX$. Letting $X := \overline{\cX}^{\Vert \cdot \Vert_A}$ be the closure, we note that $S(\cX)$ and $S(X)$ are homeomorphic through the restriction map.

	The following terminology can be found in \cite{RieffelMartricialBridges2016}. 
	\begin{definition}\label{def:slip-norm}
		Given an operator system $\cX$, a \emph{slip-norm} on $\cX$ is a seminorm $L \colon \cX \to [0,\infty)$ satisfying
		\begin{enumerate}
			\item $L(x^*) = L(x)$ for all $x \in \cX$.
			\item $\C \subseteq \ker L := \{ x \in \cX \mid L(x) = 0 \}$. 
		\end{enumerate}
	\end{definition}
	Given a slip-norm $L$ on $\cX$, we can consider the associated \emph{Monge-Kantorovi\v{c} metric} $\mk_L$ on the state space $S(\cX)$, which is given by
	\begin{align}\label{eq:def-monge-kantorovic-metric}
		\mk_L (\phi, \psi) := \sup \{ \vert \phi(x) - \psi(x) \vert \mid L(x) \leq 1  \} \quad \text{for all $\phi, \psi \in S(\cX)$.}
	\end{align}
	Despite calling $\mk_L$ a metric, it is a priori only an extended metric, that is, $\mk_L$ may assign infinite distances between states. 
	\begin{definition}\label{def:CQMS}
		Suppose $\cX$ is an operator system equipped with a slip-norm $L$. If the (extended) metric $\mk_L$ of \eqref{eq:def-monge-kantorovic-metric} metrizes the weak$^*$ topology on $S(\cX)$, we say that $(\cX, L)$ is a \emph{compact quantum metric space}. 
	\end{definition}
	
	\begin{remark}\label{remark:CQMS-implies-kernel-is-C}
		Suppose $L$ is a slip-norm on an operator system $\cX$. If $(\cX, L)$ is a compact quantum metric space, then it must in fact be true that $\ker (L) = \C$. To see this, note that if there were $x \in \ker (L) \setminus \C$, then we could find states $\phi, \psi \in S(\cX)$ with $\phi(x) \neq \psi(x)$ and then
		\begin{align*}
			\sup \{ \vert \phi(y) - \psi(y) \vert \mid L(y) \leq 1  \} = \infty 
		\end{align*}
		since $tx \in \ker(L)$ for all $t \in \R$. But $S(\cX)$ is compact and connected in the weak$^*$ topology, and so it can not be metrized by an extended metric attaining the value $\infty$. 
	\end{remark}

	The statement in \cref{def:CQMS} is difficult to verify in practice. 
	We record the following useful reformulation from \cite{OzawaRieffel2004}, which we will make extensive use of in the sequel. Note that in \cite{OzawaRieffel2004} the result is stated for dense $*$-subalgebras of unital $C^*$-algebras. However, the result and proof is based on \cite[Theorem 1.8]{RieffelMetricActionsCompactGroups1998}, wherein only an operator system structure is required. We therefore state the result for operator systems. Recall that a subset of a metric space is said to be \emph{totally bounded} if for any $\varepsilon >0$, it may be covered by finitely many $\varepsilon$-balls. If the metric space is complete, then total boundedness is equivalent to precompactness.

		\begin{proposition}[Proposition 1.3 in \cite{OzawaRieffel2004}]\label{prop:Ozawa-Rieffel-characterization}
		Let $\cX$ be an operator system in an ambient unital $C^*$-algebra $A$, and let $L \colon \cX \to [0,\infty)$ be a slip-norm.  Let $\sigma$ be any state on $\cX$.  
		Then 
		$(\cX,L)$ is a compact quantum metric space if and only if
		\begin{align*}
				E^\sigma_L = \{ x \in \A \mid L(x) \leq 1 \text{ and } \sigma(x) = 0 \}
			\end{align*}
		is a norm-totally bounded subset of $A$.  
	\end{proposition}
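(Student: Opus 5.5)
The plan follows Rieffel's argument in \cite{RieffelMetricActionsCompactGroups1998}, on which \cite{OzawaRieffel2004} bases its proof; only the operator system structure of $\cX$ and the fact that states on $\cX$ are positive unital functionals are used. Write $X$ for the norm closure of $\cX$ in $A$. Recall that $S(\cX)$ is weak$^*$ compact, and that $\mk_L$ metrizes the weak$^*$ topology if and only if two things hold: $\mk_L$ is finite and induces a topology no finer than the weak$^*$ one, and the identity map $(S(\cX),\text{weak}^*)\to(S(\cX),\mk_L)$ is continuous. By compactness it suffices to show that the identity from the weak$^*$ topology to the $\mk_L$ topology is continuous and that $\mk_L$ separates points and is finite. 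The argument reduces everything to the set $E^\sigma_L$ by the decomposition $x = (x - \sigma(x)1) + \sigma(x)1$. Since $L$ vanishes on scalars, $\phi(x)-\psi(x) = \phi(x')-\psi(x')$ for $x' = x-\sigma(x)1 \in E^\sigma_L$ whenever $L(x)\le 1$. Hence
\[
\mk_L(\phi,\psi) = \sup_{y\in E^\sigma_L}|\phi(y)-\psi(y)|.
\]

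For ($\Leftarrow$), suppose $E^\sigma_L$ is totally bounded. Then it is norm bounded, say by $R$, so $\mk_L\le 2R$ is finite. Given $\varepsilon>0$, choose a finite $\varepsilon$-net $y_1,\dots,y_n$ in $E^\sigma_L$. For states $\phi,\psi$ we get
\[
\mk_L(\phi,\psi)\le \max_k|\phi(y_k)-\psi(y_k)| + 2\varepsilon,
\]
because states are contractive. The right-hand side is weak$^*$ continuous in $\phi$ and $\psi$, so $\mk_L$-balls contain weak$^*$ neighbourhoods, and the $\mk_L$ topology is coarser than the weak$^*$ topology. The reverse comparison and separation of points require that $\{x : L(x)\le1\}$ together with $\C 1$ span a dense subset of $\cX$. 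This holds automatically because $L$ is finite on all of $\cX$: every $x\in\cX$ is a multiple of an element with $L\le1$, since if $L(x)>0$ then $x/L(x)$ has $L\le 1$, and if $L(x)=0$ then $tx$ has $L\le 1$ for every $t$. So if $\phi\ne\psi$ on $\cX$, they differ on some $y\in E^\sigma_L$, giving $\mk_L(\phi,\psi)>0$. A continuous bijection from a compact space onto a Hausdorff space is then a homeomorphism, which closes this direction.

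For ($\Rightarrow$), regard each $y\in E^\sigma_L$ as an affine function $\hat y$ on $S(\cX)$. If $y$ is self-adjoint, then $\hat y$ is real valued and $1$-Lipschitz for $\mk_L$. By compactness of $(S(\cX),\mk_L)$, Arzel\`a--Ascoli makes the set of such $\hat y$ totally bounded in the sup norm; here $\hat y(\sigma)=0$ supplies uniform boundedness, noting that $\mk_L$ is finite since it is a metric inducing a compact topology. For self-adjoint $y$ the norm satisfies $\|y\| = \sup_{\phi\in S(\cX)}|\phi(y)|$, and this extends from $\cX$ to the closure $X$ using $S(\cX)\cong S(X)$. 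Hence $y\mapsto\hat y$ is isometric on self-adjoint elements. In general, write $y = \operatorname{Re}y + i\operatorname{Im}y$. Since $L(y^*)=L(y)$, both parts have $L\le1$ and vanish under $\sigma$ (as $\sigma(y^*)=\overline{\sigma(y)}$), so both lie in the self-adjoint part of $E^\sigma_L$, which is therefore totally bounded. Hence $E^\sigma_L\subseteq E_{sa}+iE_{sa}$ is totally bounded.

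I expect the main obstacle to be the isometry step in ($\Rightarrow$). Operator systems need not be $C^*$-algebras, so I must check that $\|y\|=\sup_\phi|\phi(y)|$ for self-adjoint $y$, using states of $A$ restricted to $\cX$ together with the Hahn--Banach extension of states on operator systems.
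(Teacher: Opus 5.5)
Your proof is correct and is essentially Rieffel's argument from \cite[Theorem 1.8]{RieffelMetricActionsCompactGroups1998}, which is the argument the paper defers to through \cite{OzawaRieffel2004}; you phrase it directly with the $\sigma$-normalised set rather than the quotient by $\C 1$. Two points to tighten. In ($\Rightarrow$), finiteness of $\mk_L$ comes from connectedness (convexity) of $S(\cX)$, not from compactness alone, as in \cref{remark:CQMS-implies-kernel-is-C}. The norm identity for self-adjoint $y$ needs only restriction of states of $A$ to $\cX$ plus contractivity of states on $\cX$, with no Hahn--Banach step.
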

	
	\begin{remark}\label{remark:smaller-seminorm-CQMS-implies-CQMS}
		Suppose $\cX$ is an operator system in an ambient unital $C^*$-algebra $A$, and let $L_1, L_2 \colon \cX \to [0,\infty)$ be slip-norms. Assume that $L_1(x) \leq L_2(x)$ for all $x \in \cX$. As subsets of totally bounded sets are totally bounded, we see by \cref{prop:Ozawa-Rieffel-characterization}  that if $(\cX, L_1)$ is a compact quantum metric space, so is $(\cX, L_2)$.
	\end{remark}

	The following two examples show how we may attempt to construct compact quantum metric spaces from discrete groups equipped with length functions, and from classical compact metric spaces. Indeed, as \'etale groupoids may be viewed as joint generalizations of discrete groups and locally compact Hausdorff spaces, these examples will provide the inspiration for how to construct compact quantum metric spaces 
	in \cref{sec:CQMS-from-groupoids}.

	\begin{example}\label{example:CQMS-from-groups}
		Let $\Gamma$ denote a countable discrete group with unit $e$. Suppose $\ell \colon \Gamma \to [0,\infty)$ is a length function, by which we mean $\ell$ satisfies $\ell(g) = 0$ if and only if $g = e$, $\ell(g^{-1}) = \ell(g)$ for all $g \in \Gamma$, and $\ell(g_1g_2) \leq \ell(g_1) + \ell(g_2)$ for all $g_1,g_2 \in \Gamma$. Moreover, suppose $\ell$ is proper, that is, $\ell^{-1}([0,r])$ is finite for all $r \geq 0$. 
		The proper length function $\ell$ gives rise to a self-adjoint unbounded operator given by the self-adjoint closure of the operator
		\begin{align*}
			D_\ell \colon C_c (\Gamma) \to \ell^2(\Gamma), \quad \delta_g \mapsto \ell(g) \delta_g 
		\end{align*}
		where $(\delta_g)_{g \in \Gamma}$ is the canonical basis for $\ell^2(\Gamma)$. We also denote the closure of the above operator by $D_\ell$ for simplicity. The triple $(C_c (\Gamma), \ell^2 (\Gamma), D_\ell)$ then defines a spectral triple for the $C^*$-algebra $\Cred (\Gamma)$ \cite{Connes1989}. We obtain a derivation
		\begin{align*}
			\delta \colon C_c (\Gamma) \to B(\ell^2(\Gamma)), \quad f \mapsto \overline{[D_\ell, \Lambda(f)]},
		\end{align*}
		where $\Lambda \colon \Cred(\Gamma) \to B(\ell^2(\Gamma))$ is the left regular representation. By $\overline{[D_\ell, \Lambda(f)]}$ we mean the closure of the operator $[D_\ell, \Lambda(f)]$.  In \cite{Rieffel02CQMS} Rieffel uses the derivation $\delta$ to construct a slip-norm on $C_c(\Gamma)$. Note that here $C_c (\Gamma)$ is realized as an operator system inside the $C^*$-algebra $\Cred (\Gamma)$. Moreover, $C_c(\Gamma)$ is dense in $\Cred(\Gamma)$, so $S(\Cred (\Gamma)) \cong S(C_c(\Gamma))$ through the restriction map. Specifically, for $f \in C_c(\Gamma)$, he sets 
		\begin{align}\label{eq:slip-norm-length-discrete-group-case}
			L_\ell (f) := \Vert \delta(f) \Vert_{B(\ell^2(\Gamma))}
		\end{align}
		and asks whether the pair $(C_c (\Gamma), L_\ell)$ gives rise to a compact quantum metric space. 
		This has been shown in the positive for word-hyperbolic groups \cite{OzawaRieffel2004} and for metric groups $(\Gamma, \ell)$ with bounded doubling (in particular finitely generated groups of polynomial growth) \cite{ChristRieffel}, and there are currently no known counterexamples using word-length functions. In \cite{ChristensenIvanRD}, they consider the analogous question but allow for iterated application of the derivation $\delta$. That is, considering the slip-norm $L^n_\ell(f) := \Vert \delta^n (f) \Vert_{B(\ell^2(\Gamma))}$ for $n \in \N$, they ask when the pair $(C_c (\Gamma), L^n_\ell)$ yields a compact quantum metric space. 
		It is shown that whenever the metric group $(\Gamma, \ell)$ has rapid decay, there is $k \in \N$ such that $(C_c (\Gamma), L^n_\ell)$ is a compact quantum metric space for all $n \geq k$. Moreover, $k$ can be determined by the rate of rapid decay for $(\Gamma, \ell)$, that is, by the quantity
		\begin{align*}
			\inf \{ s\geq 0\mid \text{There is $C > 0$ such that $\Vert f \Vert_{\Cred(\Gamma)} \leq C \Vert f \Vert_{2,s}$ for all $f \in C_c(\Gamma)$}\}.
		\end{align*}
	\end{example}
	
	\begin{example}\label{example:CQMS-from-compact-metric-spaces}
		Let $(X,d)$ be a compact metric space, and denote by $\mathrm{Lip}(X)$ the dense $*$-subalgebra of $C(X)$ consisting of Lipschitz continuous functions on $(X,d)$. Let $L \colon \mathrm{Lip}(X) \to \R$ be the assignment of a function to its Lipschitz constant, that is
		\begin{align*}
			L(f) = \sup \{ \frac{\vert f(x) - f(y) \vert}{d(x,y)} \mid x \neq y  \}
		\end{align*}
		for $f \in \mathrm{Lip }(X)$. It is easily verified that $L$ is a slip-norm, and in fact $(\mathrm{Lip}(X), L)$ becomes a compact quantum metric space, because 
		\begin{align*}
			\mk_L (\phi, \psi) = \sup \{ \vert \phi(f) - \psi(f) \vert \mid f\in \mathrm{Lip}(X) \text{ with } L(f) \leq 1  \}
		\end{align*}
		recovers the weak$^*$-topology on $S(C(X))$, see \cite{RieffelMetricsOnStateSpaces1999}. 
	\end{example}
	
	In \cref{sec:AF-groupoids}, we will show that compact quantum metric spaces determined by certain subgroupoids of AF groupoids converge to the compact quantum metric space determined by the AF groupoid in a certain topology. This topology is determined by the quantum Gromov--Hausdorff distance, which assigns a distance between pairs of compact quantum metric spaces. In order to discuss this, we must introduce the notion of admissible seminorms on direct sums of operator spaces, for which we must do a brief detour. Our definition of quantum Gromov--Hausdorff distance will agree with Rieffel's original notion from \cite{Rieffel2004qGH}, but for our purposes we will find it easier to follow the discussion and conventions from \cite{KaadKyedSU2}. We remark that there are other notions of distance which could be relevant. 
	We mention in particular the quantum Gromov--Hausdorff propinquity by Latrémolière, see for example 
	\cite{LatremoliereDual15, LatremoliereqGHP16}. However, we take the operator system approach to compact quantum metric spaces in this article. As such the Leibniz seminorms used for the quantum Gromov--Hausdorff propinquity do not appear naturally. It could be interesting to investigate 
	to which extent
	the constructions of this article 
	might be adapted to Latrémolière's $C^*$-algebraic formulation of quantum metric geometry. 

	Now, 
	suppose $\cX$ is an operator system, and let $L_\cX\colon \cX \to [0,\infty)$ be a slip-norm.   Denote by $\cX_{\mathrm{sa}}$  the selfadjoint elements of $\cX$. Then $L_\cX$ restricts to a seminorm $(L_\cX)_{\mathrm{sa}} \colon \cX \to [0,\infty)$. 
	
	Suppose we are now given two compact quantum metric spaces $(\cX, L_\cX)$ and $(\cY, L_\cY)$. Note that $\cX \oplus \cY$ is an operator system in a natural way. 
	A slip-norm $K \colon \cX \oplus \cY \to [0,\infty)$ is said to be \emph{admissible} when $(\cX \oplus \cY, K)$ is a compact quantum metric space and the quotient seminorms induced by $K_\mathrm{sa}$ via the coordinate projections $\cX \oplus \cY \to \cX$ and $\cX \oplus \cY \to \cY$ are  $(L_\cX)_\mathrm{sa}$ and $(L_\cY)_\mathrm{sa}$, respectively. 
	Whenever $K \colon \cX \oplus \cY \to [0,\infty)$ is admissible, the coordinate projections induce isometric inclusions of compact metric spaces $(S(\cX), \mk_{L_{\cX}}) \to (S(\cX \oplus \cY), \mk_{K})$ and $(S(\cY), \mk_{L_{\cY}}) \to (S(\cX \oplus \cY), \mk_K)$. In particular, each admissible slip-norm $K$ gives rise to a distance between $(S(\cX), \mk_{L_{\cX}})$ and $(S(\cY), \mk_{L_{\cY}})$ through the Hausdorff distance which we denote by
	\begin{align*}
		\mathrm{dist}^{d_K}((S(\cX), \mk_{L_{\cX}}), (S(\cY), \mk_{L_{\cY}})).
	\end{align*}
	In analogy with the classical Gromov--Hausdorff distance for compact metric spaces, we define the \emph{quantum Gromov--Hausdorff distance} between $(\cX, L_\cX)$ and $(\cY, L_\cY)$ as
	\begin{align*}
		\mathrm{dist}_Q &((\cX, L_\cX), (\cY, L_\cY))\\ &:= \inf \{ \mathrm{dist}^{d_K}((S(\cX), \mk_{L_{\cX}}), (S(\cY), \mk_{L_{\cY}})) \mid K \colon \cX \oplus \cY \to [0, \infty) \text{ admissible slip-norm}  \}.
	\end{align*}
	Exact distances are difficult to calculate. We shall however mostly be concerned with convergence of compact quantum metric spaces obtained by restriction of the quantum metric structure from an ambient system. 
	We record the following result, which is a special case of \cite[Corollary 2.2.5]{KaadKyedSU2}. 
	\begin{proposition}\label{prop:KK-subsystem-qGH-convergence}
		Let $(\cX, L_\cX)$ be a compact quantum metric space, and suppose $\cY \subseteq \cX$ is a sub-operator system. Let $L_\cY = (L_\cX)_{ \vert_{\cY}}$. Suppose there is $\varepsilon > 0$ along with a unital positive map $\Phi \colon \cX \to \cY$ such that $L_\cY (\Phi (x)) \leq L_\cX (x)$ and $\Vert x - \Phi(x)\Vert_\cX \leq \varepsilon \cdot L_{\cX} (x)$ for all $x \in \cX$. Then $(\cY, L_\cY)$ is a compact quantum metric space, and
		\begin{align*}
			\mathrm{dist}_Q ((\cX, L_\cX), (\cY, L_\cY)) \leq  \varepsilon.
		\end{align*} 
	\end{proposition}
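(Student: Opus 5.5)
This is a special case of \cite[Corollary 2.2.5]{KaadKyedSU2}, but here is how I would prove it directly. The plan has two parts: first show that $(\cY, L_\cY)$ is a compact quantum metric space via \cref{prop:Ozawa-Rieffel-characterization}, then build an explicit admissible slip-norm on $\cX \oplus \cY$ that forces the Hausdorff distance to be at most $\varepsilon$.

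\emph{Compactness of $(\cY,L_\cY)$.} Fix a state $\sigma$ on $\cX$ and restrict it to $\cY$. Then
\[
E^\sigma_{L_\cY} = \{ y \in \cY \mid L_\cY(y)\le 1,\ \sigma(y)=0\} \subseteq E^\sigma_{L_\cX},
\]
and the right-hand set is norm-totally bounded because $(\cX,L_\cX)$ is a compact quantum metric space. A subset of a totally bounded set is totally bounded, so \cref{prop:Ozawa-Rieffel-characterization} gives the first claim. Note that $\Phi$ is not needed for this step.

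\emph{The admissible seminorm.} Following Rieffel's bridge construction, define for $(x,y) \in \cX\oplus\cY$
\[
K(x,y) := \max\bigl\{ L_\cX(x),\ L_\cY(y),\ \varepsilon^{-1}\Vert \Phi(x) - y\Vert \bigr\}.
\]
The routine checks are as follows.
\begin{itemize}
\item $K$ is a seminorm.
\item $K(x^*,y^*)=K(x,y)$, because a positive unital map is $*$-preserving.
\item $K$ vanishes on scalars, because $\Phi(1)=1$.
\item $(\cX\oplus\cY,K)$ is a compact quantum metric space. Apply \cref{prop:Ozawa-Rieffel-characterization} with the state $(x,y)\mapsto \sigma(x)$. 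If $K(x,y)\le1$ and $\sigma(x)=0$, then $x$ lies in a totally bounded set, and $y$ lies within distance $\varepsilon$ of $\Phi(x)$. Since $\Phi$ is bounded (it is positive unital), $\Phi(x)$ also lies in a totally bounded set. Finally, $y - \sigma(y)1$ lies in $E^\sigma_{L_\cY}$ and $|\sigma(y)| \le \Vert y\Vert$ is bounded, so $y$ ranges over a totally bounded set too.
\end{itemize}

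\emph{Quotient seminorms.} On self-adjoint elements, the quotient of $K$ to $\cX$ equals $L_\cX$: given $x$, take $y=\Phi(x)$. This pair has $K(x,\Phi(x)) = L_\cX(x)$, since $L_\cY(\Phi(x))\le L_\cX(x)$ and the third term vanishes. The quotient of $K$ to $\cY$ equals $L_\cY$: given $y$, take $x=y$. Then $L_\cX(y)=L_\cY(y)$, and $\Vert \Phi(y)-y\Vert \le \varepsilon L_\cX(y)$, so the third term is at most $L_\cY(y)$. In both cases the quotient is at most the target, and it is at least the target trivially. Hence $K$ is admissible.

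\emph{The distance estimate.} This is the main step. Given $\psi\in S(\cY)$, pair it with the state $\psi\circ\Phi\in S(\cX)$. For $K(x,y)\le 1$,
\[
|\psi(\Phi(x)) - \psi(y)| \le \Vert\Phi(x)-y\Vert \le \varepsilon .
\]
So every point of $S(\cY)$ lies within $\varepsilon$ of $S(\cX)$ in $\mk_K$.

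Given $\phi\in S(\cX)$, extend it by Hahn--Banach/Krein to a state $\tilde\phi$ on the ambient $C^*$-algebra $A$, and pair $\phi$ with the restriction $\tilde\phi|_\cY\in S(\cY)$. For $K(x,y)\le1$, split
\[
|\tilde\phi(x)-\tilde\phi(y)| \le |\tilde\phi(x-\Phi(x))| + |\tilde\phi(\Phi(x)-y)|.
\]
The second term is at most $\varepsilon$. The first term is at most $\varepsilon L_\cX(x)\le\varepsilon$. Adding these gives $2\varepsilon$, so I would need to sharpen the argument to reach $\varepsilon$. The fix is to choose the pairing more carefully: use $\tilde\phi(x)$ together with $\tilde\phi$ on $\Phi(x)$, which is the original approach of Kaad--Kyed, or rescale the third term of $K$ to optimise the constant. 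I expect obtaining the sharp constant $\varepsilon$, rather than $2\varepsilon$, to be the only delicate point. If that sharpening fails, I would fall back on citing \cite[Corollary 2.2.5]{KaadKyedSU2} directly for the constant.
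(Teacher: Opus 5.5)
Everything before the last step is sound: the inclusion $E^\sigma_{L_\cY}\subseteq E^\sigma_{L_\cX}$, the slip-norm and total-boundedness checks for $K$, admissibility via the witnesses $(x,\Phi(x))$ and $(y,y)$, and the bound $\le\varepsilon$ from pairing $\psi\in S(\cY)$ with $\psi\circ\Phi$. (The paper gives no argument of its own and simply cites \cite[Corollary 2.2.5]{KaadKyedSU2}.) The gap is the $\cX$-side estimate, and your proof as written only gives $\mathrm{dist}_Q\le 2\varepsilon$. The factor $2$ does not come from a loose estimate: with your one-sided bridge the Hausdorff distance can really be $2\varepsilon$. For example, take $\cX=C(\{a,b\})$ with $d(a,b)=D$, $\cY=\C 1$, $L_\cY=0$, $\Phi(f)=\tfrac12(f(a)+f(b))1$ and $\varepsilon=D/2$. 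Then $f=(D/2,-D/2)$ and $y=-\tfrac D2\cdot 1$ satisfy $K(f,y)=1$ and $|f(a)-(-D/2)|=D=2\varepsilon$, so $\delta_a$ is at $\mk_K$-distance $2\varepsilon$ from the only state of $\cY$.

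Neither of your proposed remedies repairs this. Evaluating $\tilde\phi$ on $\Phi(x)$ still amounts to using $\tilde\phi|_\cY=\phi|_\cY$, and $\phi\circ\Phi$ is a state on $\cX$, not on $\cY$. If you replace $\varepsilon^{-1}$ by $c^{-1}$, the hypotheses only let you verify the quotient condition onto $\cY$ for $c\ge\varepsilon$, while the $\cX$-side bound becomes $\varepsilon+c$. The exception is when $\Phi$ fixes $\cY$ pointwise; this happens in \cref{thm:CQMS-from-AF-groupoids} but is not assumed in the statement.

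The missing idea is to use the inclusion $\cY\subseteq\cX$ as the map in the reverse direction, that is, to compare $x$ and $y$ directly in the bridge:
\[
K(x,y):=\max\bigl\{L_\cX(x),\ L_\cY(y),\ \varepsilon^{-1}\Vert\Phi(x)-y\Vert,\ \varepsilon^{-1}\Vert x-y\Vert\bigr\}.
\]
This is still a slip-norm. It dominates your $K$, so your total-boundedness argument carries over. Your witnesses still give admissibility: for $(x,\Phi(x))$ the new term is $\varepsilon^{-1}\Vert x-\Phi(x)\Vert\le L_\cX(x)$, and for $(y,y)$ it vanishes.

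Now pair $\phi\in S(\cX)$ with $\phi|_\cY$; this gives $|\phi(x)-\phi(y)|\le\Vert x-y\Vert\le\varepsilon$ whenever $K(x,y)\le1$. Pairing $\psi\in S(\cY)$ with $\psi\circ\Phi$ gives $\le\varepsilon$ as before. So the Hausdorff distance, and hence $\mathrm{dist}_Q$, is at most $\varepsilon$. You can even drop the term $\varepsilon^{-1}\Vert\Phi(x)-y\Vert$ if you pair $\psi$ with a state extension of $\psi$ to $\cX$; then $\Phi$ is used only to check the quotient condition onto $\cX$. With this change your direct proof is complete and no longer needs the fallback citation.
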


	\section{Compact quantum metric spaces from groupoids}\label{sec:CQMS-from-groupoids}
	\subsection{Constructing the seminorm and operator system}
	As an \'etale groupoid with compact unit space generalizes both a discrete group and a compact metric space, our candidate $(\cX, L)$ for a compact quantum metric space consisting of an operator system $\cX \subseteq \Cred (\G)$ and slip-norm $L\colon \cX \to [0,\infty)$, should generalize both \cref{example:CQMS-from-groups} and \cref{example:CQMS-from-compact-metric-spaces}. We do this in two steps. Let us first see how the slip-norm coming from the length function in \cref{example:CQMS-from-groups} generalizes to the groupoid setting. We remind the reader that all groupoids considered in this article will be 
	locally compact and Hausdorff.
	
	Now, let us be given an \'etale groupoid $\G$ with compact unit space $\Go$, along with a continuous 
	proper length function $\ell\colon \G \to [0,\infty)$.
	Let $\Lambda \colon \Cred (\G) \to \curlyL_{C(\Go)}(\E)$ be the left regular representation, see \cref{sec:prelims-groupoids}. Realizing $C_c(\G)$ as a subspace of $\E$,
	the length function $\ell$ gives rise to an operator $D_\ell \colon  C_c (\G) \to \E$ given by
	\begin{align}\label{eq:def-action-of-Dell}
		(D_\ell \xi) (\gamma) = \ell(\gamma) \xi(\gamma) 
	\end{align}
	for $\xi \in C_c (\G)$. 
	The calculation
	
	\begin{align*}
		D_\ell(  \xi \cdot f)(\gamma) = \ell(\gamma) (\xi \cdot f)(\gamma) = \ell(\gamma) \xi(\gamma)f(s(\gamma)) =  \ell(\gamma) \xi(\gamma) f(s(\gamma)) =  ((D_\ell \xi)\cdot f)(\gamma)
	\end{align*}
	for $\gamma \in \Gamma$, $f \in C(\Go)$ and $\xi \in \E$, shows that $D_\ell$ is a
	densely defined 
	(typically unbounded) operator on the Hilbert $C(\Go)$-module $E$. 
	
	The analogue of \cref{example:CQMS-from-groups} would be to now consider the commutator $[D_\ell , \Lambda (f)]$ for $f \in C_c (\G)$, and the resulting seminorm $L_\ell (f) = \Vert [D_\ell, \Lambda (f)]\Vert$. We first verify that this seminorm, and the analogous one obtained by taking iterated commutators, is in fact finite for every $f \in C_c (\G)$.
	
	\begin{proposition}\label{prop:iterated-commutator-adjointable}
		Suppose $f \in C_c (\G)$, and let $\delta^n (f)$ for $n \in \N$ denote the $n$ times iterated commutator $[D_\ell, [D_\ell, \ldots [D_\ell, \Lambda(f)]]]$. Then for any $\xi \in \E$ and $\gamma \in \Gamma$, we have
		\begin{align*}
			\delta^n(f)(\xi)(\gamma) = \sum_{\mu \in \G_{s(\gamma)}} (\ell(\gamma) - \ell(\mu))^n f(\gamma \mu^{-1}) \xi(\mu).
		\end{align*}
		Consequently we may view $\delta^n(f)$ as an element of the groupoid Roe algebra 
		$\Cred( \G \ltimes \beta\G)$,
		and so we may also view $\delta^n(f) \in \mathcal{L}_{C(\Go)}(\E)$. 
	\end{proposition}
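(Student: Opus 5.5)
We prove the formula by induction on $n$, working first on the dense submodule $C_c(\G)\subseteq \E$ where $D_\ell$ is defined, and afterwards identify the resulting kernel with an element of $\Cred(\G\ltimes\beta\G)$.

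\emph{The formula.} For $n=1$ and $\xi\in C_c(\G)$, note that $\Lambda(f)\xi\in C_c(\G)$, so both $D_\ell\Lambda(f)\xi$ and $\Lambda(f)D_\ell\xi$ make sense. By \eqref{eq:left-regular-representation} and \eqref{eq:def-action-of-Dell},
\begin{align*}
	(D_\ell\Lambda(f)\xi)(\gamma)-(\Lambda(f)D_\ell\xi)(\gamma)=\sum_{\mu\in\G_{s(\gamma)}}\big(\ell(\gamma)-\ell(\mu)\big)f(\gamma\mu^{-1})\xi(\mu).
\end{align*}
For the inductive step, write the $n$-th commutator as the integral operator with kernel $k_n(\gamma,\mu)=(\ell(\gamma)-\ell(\mu))^n f(\gamma\mu^{-1})$. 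Commuting once more with $D_\ell$ multiplies this kernel by $\ell(\gamma)-\ell(\mu)$, by exactly the same computation. This uses only that the kernel is supported where $\gamma\mu^{-1}\in\supp f$, which keeps all sums finite and keeps $C_c(\G)$ invariant.

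\emph{Boundedness and the Roe algebra picture.} Set $F_n(\gamma,\mu):=(\ell(\gamma)-\ell(\mu))^n f(\gamma\mu^{-1})$ for $(\gamma,\mu)\in\G\tensor[_s]{*}{_r}\G$. Reparametrize by $\eta=\gamma\mu^{-1}$, so $F_n(\eta,\mu)=(\ell(\eta\mu)-\ell(\mu))^n f(\eta)$. Subadditivity and symmetry of $\ell$ give $|\ell(\eta\mu)-\ell(\mu)|\le \ell(\eta)$. Hence $F_n$ is supported in $\supp f\times\G$ and bounded by $\sup_{\supp f}\ell^n|f|$, which is finite since $\supp f$ is compact and $\ell$ is continuous.

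For fixed $\eta$, the function $\mu\mapsto F_n(\eta,\mu)$ on $\G^{s(\eta)}$ is bounded and continuous. The plan is therefore to regard $F_n$ as a bounded continuous function on $\G\tensor[_s]{*}{_r}\G$ that is uniformly bounded in the $\G$-direction and compactly supported in the first variable. Such a function should extend continuously to $\G\ltimes\beta\G=\G\tensor[_s]{*}{_{r_\beta}}\beta\G$ with compact support contained in $\supp f\times\beta\G$. This is the main technical point. I would handle it by covering $\supp f$ with finitely many open bisections $U_j$ and using a subordinate partition of unity. On each bisection the map $\eta\mapsto$ (the slice) identifies the relevant part with an open subset of $\beta\G$, and $\mu\mapsto\ell(\eta\mu)$ is a bounded continuous function on $r^{-1}(s(U_j))$ after multiplying by the cut-off, so it extends to $\beta\G$ by the universal property of $C_b(\G)\cong C(\beta\G)$.

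This yields an element $\tilde F_n\in C_c(\G\ltimes\beta\G)\subseteq\Cred(\G\ltimes\beta\G)$ with $\rho(\tilde F_n)=F_n$. By \eqref{eq:action-of-Roe-alg-on-E}, $\tilde\Lambda(\tilde F_n)$ agrees with $\delta^n(f)$ on $C_c(\G)$. Since $\tilde\Lambda(\tilde F_n)$ is adjointable, $\delta^n(f)$ extends to an element of $\mathcal{L}_{C(\Go)}(\E)$. Alternatively, and more cheaply, one can bound its norm directly by the $I$-norm estimate: by the previous paragraph and Assumption~\ref{assumption:uniform-fiber-cardinality-bound}, the row and column sums of $|F_n|$ are bounded by a constant times $\sup_{\supp f}\ell^n|f|$.
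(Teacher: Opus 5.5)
Your proof is correct. The formula is proved by the same induction as in the paper, and your kernel $F(\eta,\mu)=(\ell(\eta\mu)-\ell(\mu))^n f(\eta)$ on $\G\tensor[_s]{*}{_r}\G$ is exactly the paper's $F$. The two proofs differ only in how they show that $F$ extends to an element of $C_c(\G\ltimes\beta\G)$. The paper transports $F$ to $\beta\G\rtimes\G$ via \cref{remark:left-right-groupoid-isomorphism}. It then rewrites $F$ as the kernel $(\mu,\gamma)\mapsto(\ell(\mu)-\ell(\gamma))^n f(\mu^{-1}\gamma)$ on $\G\tensor[_r]{*}{_r}\G$, observes that this kernel is continuous and supported in a tube, and invokes \cite[Lemma 6.17]{DelarocheExactGroupoidsV3}. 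You instead build the extension directly: you cover $\supp f$ by finitely many bisections $U_j$, identify $U_j\tensor[_s]{*}{_{r_\beta}}\beta\G$ with the open set $r_\beta^{-1}(s(U_j))\subseteq\beta\G$, and use $C_b(\G)\cong C(\beta\G)$. Your route is self-contained. It also makes explicit, via $\vert\ell(\eta\mu)-\ell(\mu)\vert\leq\ell(\eta)$, the boundedness of $F$; any such extension needs this, tube support alone does not give it, and the paper does not spell it out. The paper's route is shorter because the extension result is outsourced.

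One point to state correctly when you write it out. On a slice it is not $\mu\mapsto\ell(\eta\mu)$ that is bounded. Neither $\ell(\eta_\mu\mu)$ nor $\ell(\mu)$ is bounded on $r^{-1}(s(U_j))$, so they cannot be extended to $\beta\G$ separately. What is bounded is the whole function $\mu\mapsto\chi_j(\eta_\mu)\,(\ell(\eta_\mu\mu)-\ell(\mu))^n f(\eta_\mu)$, where $\eta_\mu=(s\vert_{U_j})^{-1}(r(\mu))$ and $\chi_j$ is your cut-off. This function vanishes outside the closed set $r^{-1}(s(\supp\chi_j))$, so it lies in $C_b(\G)$. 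Its extension pulls back to a function supported in the compact set $\supp\chi_j\tensor[_s]{*}{_{r_\beta}}\beta\G$.

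A smaller slip: your first definition of $F_n(\gamma,\mu)$ lives on $\G\tensor[_s]{*}{_s}\G$; you are on $\G\tensor[_s]{*}{_r}\G$ only after substituting $\eta=\gamma\mu^{-1}$. Finally, your cheaper $I$-norm alternative only yields $\delta^n(f)\in\mathcal{L}_{C(\Go)}(\E)$, not membership in $\Cred(\G\ltimes\beta\G)$. The latter is what \cref{lemma:multipliers-extend-to-Roe} later uses.
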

	\begin{proof}
		A straightforward calculation using \eqref{eq:left-regular-representation} and \eqref{eq:def-action-of-Dell} will show that for $n=1$ we have
		\begin{align*}
			\delta(f)(\xi)(\gamma) = \sum_{\mu \in \G_{s(\gamma)}} (\ell(\gamma) - \ell(\mu)) f(\gamma \mu^{-1}) \xi(\mu)
		\end{align*}
		for all $f \in C_c(\G)$, $\xi \in \E$ and $\gamma \in \G$. Suppose now that the identity from the statement of the proposition holds for $n-1$. We then calculate
		\begin{align*}
			\delta^n(f)(\xi)(\gamma) &= [D_\ell, \delta^{n-1}(f)](\xi)(\gamma) \\
			&=D_\ell \delta^{n-1}(f)(\xi)(\gamma) - \delta^{n-1}(f) (D_\ell \xi)(\gamma) \\
			&= \ell(\gamma) \sum_{\mu \in \G_{s(\gamma)}} (\ell(\gamma) - \ell(\mu))^{n-1}f(\gamma \mu^{-1})\xi(\mu) - \sum_{\mu \in \G_{s(\gamma)}} (\ell(\gamma) - \ell(\mu))^{n-1}f(\gamma \mu^{-1})\ell(\mu)\xi(\mu) \\
			&= \sum_{\mu \in \G_{s(\gamma)}} (\ell(\gamma) - \ell(\mu))^{n}f(\gamma \mu^{-1})\xi(\mu),
		\end{align*}
		which shows the first part of the proposition. We proceed to show that  $\delta^n(f)$ has a unique extension to an element of 
		$C_c (\G \ltimes \beta\G)$. 
		Let $F\colon \G \tensor[_s]{*}{_r} \G \to \C$ be given by 
		\begin{align*}
			F(\gamma, \mu) = (\ell(\gamma \mu) - \ell(\mu))^n f(\gamma).
		\end{align*}
		If 
		$F \in C_c ( \G \ltimes \beta\G)$,
		we see by \eqref{eq:action-of-Roe-alg-on-E} 
		that $\delta^n (f) \xi = \tilde{\Lambda}(F) \xi$ for all $\xi \in \E$. 
		To see that (the unique extension of) $F$ is in 
		$C_c (\G \ltimes \beta \G)$,
		consider first that through the isomorphism of \cref{remark:left-right-groupoid-isomorphism}, $F$ corresponds to the function $\hat{F}$ on $\beta \G \rtimes \G$ given by 
		\begin{align*}
			\hat{F}(\mu, \gamma) = F(\gamma, \gamma^{-1}\mu) = (\ell(\mu) - \ell(\gamma^{-1}\mu))^n f(\gamma) 
		\end{align*}
		Then, consider the function $\theta(\hat{F})$ on $\G \tensor[_r]{*}{_r} \G$ given by $\theta(\hat{F})(\mu, \gamma) = \hat{F}(\mu^{-1}, \mu^{-1}\gamma)$, that is
		\begin{align*}
			\theta(\hat{F})(\mu, \gamma) &= (\ell(\mu^{-1}) - \ell((\mu^{-1}\gamma)^{-1}\mu^{-1}))^n f(\mu^{-1}\gamma)\\
			&= (\ell(\mu) - \ell(\gamma))^n f(\mu^{-1}\gamma).
		\end{align*}
		Note that $\theta(\hat{F})$ is continuous on $\G\tensor[_r]{*}{_r} \G$ since both $f$ and $\ell$ are continuous. 
		Since the support $\supp f$ is compact, we see that $\theta(\hat{F})$ has support on a \emph{tube}, that is a subset of $\G \tensor[_r]{*}{_r} \G$ for which the map $(\gamma, \mu) \mapsto \gamma^{-1}\mu$ is precompact in $\G$.  It follows by \cite[Lemma 6.17]{DelarocheExactGroupoidsV3} that (the unique extension of) $\hat{F}$ is in $C_c (\beta \G \rtimes \G)$, and therefore $F \in C_c (\G \ltimes \beta \G)$. Since $\Cred( \G \ltimes \beta\G)$ acts on $\E$ through adjointable operators, this finishes the proof.   
	\end{proof}
	
	By \cref{prop:iterated-commutator-adjointable} $\delta^n (f) \in \mathcal{L}_{C(\Go)}(\E)$ for any $f \in C_c(\G)$ and any $n \in \N$. A straightforward but tedious calculation will show that
	\begin{align*}
		\delta^n (f)^* = (-1)^k \delta^n(f^*)
	\end{align*}
	so that in particular
	\begin{align*}
		\Vert \delta^n (f^*) \Vert = \Vert \delta^n(f) \Vert 
	\end{align*}
	for all $f \in C_c(\G)$. 
	In light of 
	this
	we may, for each $n \in \N$, define a seminorm 
	\begin{align}\label{eq:definition-commutator-seminorm}
		L_\ell^{n}(f) := \Vert \delta^n (f)\Vert_{\curlyL_{C(\Go)}(\E)} \quad \text{for }f \in C_c(\G).
	\end{align}
	We then see that $L_\ell^n$ is $*$-invariant, and 
 $\C  \subseteq \ker L_\ell^n$. 
	The seminorm $L_\ell^n$ is therefore a slip-norm in the sense of \cref{def:slip-norm}. 
	Moreover,
	in the case of discrete groups we recover the seminorm from \cref{example:CQMS-from-groups}. An immediate question we may ask
	is then if $(C_c (\G), L_\ell^n)$ can be shown to be a compact quantum metric space (for $n$ large enough). However, for any \'etale groupoid with compact unit space larger than a just a point this is not going to work for a very simple reason: the unit space is too large. More precisely, let $f \in C_c (\G)$ be any function such that $f = P(f)$, where $P$ is the conditional expectation from \eqref{eq:existence-cond-exp}. That is, the  support of $f$ is contained in $\Go$. It is then not difficult to see that $L_\ell^n (f) = 0$, but if $\Go$ is not just a point, there definitely exist such functions which are not just scalar multiples of the unit. Therefore $\ker L_\ell^n \supsetneq \C$ in these cases, and by \cref{remark:CQMS-implies-kernel-is-C} $(C_c (\G), L_\ell^n (f))$ can not be a compact quantum metric space for any $n \in \N$. This is also easily seen by realizing that the seminorm we construct should be able to cover recover \cref{example:CQMS-from-compact-metric-spaces}, and for any compact metric space $(X,d)$ we would have $L_\ell^n (f) = 0$ for all $f \in C_c (\G) = C(X)$. To remedy this flaw, we will take inspiration from \cite{AustadKaadKyed2025} to incorporate the unit space $\Go$ into the seminorm. To do this we introduce the following technical notion, which will play a key role in the constructions in the remainder of the article.
	
	\begin{definition}\label{def:metric-stratification}
		Suppose $\G$ is an \'etale groupoid with compact unit space $\Go$, and suppose 
		$d$ is a metric on $\Go$ inducing the topology, so $(\Go, d)$ is a compact metric space. 
		A \emph{metric stratification of $\G$ with respect to $d$} is a collection $\cK = \{K_i\}_{i\in I}$, where $I$ is a countable index set, for which the following holds:
		\begin{enumerate}
		 	\item $\G = \bigcup_{i \in I} K_{i}$.
		 	\item We have $K_{i_{1}} \cap K_{i_{2}} = \emptyset$ for $i_1\neq i_2$.
		 	\item Each $K_{i}$ is precompact 
		 	and open.
		 	\item There is a distinguished element $K_{e} \in \cK$ for which $K_{e}  = \Go$.
		 	\item For every $K_{i} \in \cK$, we also have $K_{i}^{-1}\in \cK$. 
		 	\item For each $i\in I$, the map 
		 	\begin{align}\label{eq:di-is-a-metric}
		 		d^{(i)}(\gamma, \mu) = \max \{ d(s(\gamma), s(\mu)) , d(r(\gamma), r(\mu))  \}, \quad \gamma, \mu \in K_{i},
		 	\end{align}
		 	induces on $K_i$ the structure of a totally bounded metric space. 
		 	
		\end{enumerate}
		If the metric $d$ is implied, we simply say that $\cK$ is a \emph{metric stratification of $\G$}.
	\end{definition}

\begin{remark}\label{remark:comments-on-metric-stratification-def}
	\begin{enumerate}
		\item Note that \cref{def:metric-stratification} does not rule out the possibility that $K_i = \emptyset$ for some $i \in I$.
		\item To verify condition (6) of \cref{def:metric-stratification} it suffices to show that for each $i \in I$, we have $d^{(i)}(\gamma, \mu) = 0 $ if and only if $\gamma = \mu$. Indeed, if this holds, then \eqref{eq:di-is-a-metric} tells us that $(K_i , d^{(i)})$ can be realized as a subspace of $(\Go \times \Go, \tilde{d})$ through the map $s \times r$, where
		\begin{align*}
			\tilde{d}((x_1, y_1) , (x_2,y_2)) = \max \{ d(x_1,x_2) , d(y_1,y_2) \} \quad \text{for $(x_1,y_1), (x_2,y_2) \in \Go \times \Go$}.
		\end{align*}
		Since $(\Go \times \Go, \tilde{d})$ is a compact metric space, we deduce that $d^{(i)}$ induces on $K_i$ the structure of a totally bounded metric space. 
	\end{enumerate}
	 
\end{remark}
	
	The conditions in \cref{def:metric-stratification} deserve an explanation. We illustrate first that they appear naturally from considering a transformation groupoid 
	$\G = \Gamma \ltimes X$ 
	where $\Gamma$ is a countable group, and $(X,d)$ is a compact metric space. Let $\ell_\Gamma$ be a proper length function on $\Gamma$, and induce on  
	$\Gamma \ltimes X$ 
	the proper continuous length function $\ell$ given by 
	\begin{align}\label{eq:length-function-transformation-groupoid}
			\ell(g,x) = \ell_\Gamma (g) \quad \text{ for $(g,x) \in \Gamma \ltimes X$.}
	\end{align}
	We would then immediately get a very natural looking decomposition satisfying all the above conditions by setting $\cK = (K_g)_{g\in \Gamma}$ where 
	$K_g =  \{g\} \times X$. 
	Notably, in this case we would have that each $K_g$ is compact and open, but we have weakened the condition to only require precompact in order to accommodate more general groupoids. 
	However, suppose that $\ell_\Gamma$ is integer-valued. Then another decomposition which would satisfy conditions (1)-(5) is given by setting $\cK = (K_n)_{n \in \N\cup \{0\}}$ where $K_n = \ell^{-1}(\{n\})$. However, it is entirely possible there could be $g_1, g_2 \in \Gamma$ with $\ell_\Gamma (g_1) = \ell_\Gamma(g_2)$, and $x \in X$ for which $g_1 x = g_2 x$. Then
	\begin{align*}
		r(g_1,x) = r(g_2,x) \quad \text{and} \quad s(g_1,x) = s(g_2,x)
	\end{align*}
	and the map in condition (6) would not be a metric. The absence of condition (6) would make analysis difficult in the sequel. 
	Indeed, we may think of condition (6) as a substitute for requiring that the decomposition consists of bisections, that is, subsets of $\G$ for which the range and source maps are local homeomorphisms. Were we to require $\cK$ to consist of bisections however, we would find few examples where condition (2) is simultaneously satisfied, and this condition likewise plays an important role in the sequel.  
	The main results of this paper hinges crucially on the important \cref{lemma:compact-subsets-yield-CQMS}, and several of the conditions in \cref{def:metric-stratification} can be understood from how the proof of this lemma will be done. Let us therefore sketch the overall procedure. 
	In the proof of this \cref{lemma:compact-subsets-yield-CQMS}, we will want to take $f \in \Lip_c^{\cK}(\G)$ and uniquely decompose it into a finite sum $\sum_i f_{\vert_{K_i}}$, for which conditions (1) and (2) play a crucial role. 
	We then want to treat each $K_i$ as a precompact metric space with the metric coming from \eqref{eq:di-is-a-metric} and approximate each $f_{\vert_{K_i}}$ by a partition of unity for $K_i$. To guarantee that the functions in the partition of unity are themselves in $\Lip_c^{\cK}(\G)$, it is important that each $K_i$ is open, so that we may simply extend them by zero. For this, conditions (3) and (6) are important. Furthermore, 
	we will
	sometimes 
	want to
	to treat functions with support contained in $\Go$ separately.
	As such, we will find condition (4) useful. 
	Lastly, condition (5) is included to guarantee $*$-invariance of the seminorm we define below, see \cref{def:total-seminorm} and \cref{lemma:LK-selfadjoint}. 
	
	\begin{remark}\label{remark:stratifications-degenerate-cases}
		\begin{enumerate}
			\item 	Suppose $\G = \Go$ is just a compact Hausdorff space, 
			and $d$ is a metric on $\Go$ inducing the topology.
			Considering 
			conditions (2) and (4)
			of \cref{def:metric-stratification}, we see that the only metric stratification of $\G$ available is $\cK= \{ K_e\}$ with $K_e = \Go$.
			\item Suppose $\G = \Gamma$ is a countable discrete group, and let $\ell \colon \Gamma \to [0,\infty)$ be a proper length function. Then $\Go = \{e\}$, where $e \in \Gamma$ is the unit, and there is of course a unique metric $d$ on $\Go$.  
			Considering condition (6) of \cref{def:metric-stratification} we see that the only metric stratification available is $\cK= (K_g)_{g\in \Gamma}$ with $K_g = \{g\} \times \Go = \{g\} \times \{e\}$. 
		\end{enumerate}
	
	\end{remark}
	
	In general, the choice of $\cK$ will depend on the situation. We note however, that under certain assumptions a particular choice of $\cK$ always exists. 
	
	\begin{lemma}\label{lemma:principal-metric-stratification}
		Let $\G$ be a principal \'etale groupoid, 
		where $\Go$ is compact, and let $d$ be a metric on $\Go$ inducing  the topology. 
		Suppose moreover that $\ell \colon \G \to [0,\infty)$ is a proper continuous  length function whose image $\Ima (\ell) \subseteq [0,\infty)$ is countable. Then $\cK = (K_s)_{s \in \Ima (\ell)}$ given by $K_s = \ell^{-1}(\{s\})$ is a metric stratification for $\G$. 
	\end{lemma}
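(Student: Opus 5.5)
We verify conditions (1)--(6) of \cref{def:metric-stratification} one at a time, using properness, continuity and the countability of $\Ima(\ell)$, and principality for the metric condition.

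\emph{Conditions (1), (2), (4) and (5).} Since the sets $K_s = \ell^{-1}(\{s\})$ are the fibers of $\ell$ over the countable set $\Ima(\ell)$, they are pairwise disjoint and cover $\G$, which gives (1) and (2). By property (1) of \cref{def:length-function}, $K_0 = \ell^{-1}(\{0\}) = \Go$, so $K_e := K_0$ gives (4). By property (2) of \cref{def:length-function}, $K_s^{-1} = K_s$, which gives (5).

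\emph{Condition (3).} Precompactness is immediate: $K_s \subseteq \ell^{-1}([0,s])$, and the latter set is compact by properness. Openness is the step I expect to be the main obstacle, since the fiber of a continuous map is in general only closed. The plan is to exploit countability of the image together with connectedness. Fix $\gamma \in K_s$. Since $\G$ is étale it is locally homeomorphic to $\Go$, so I would like a connected neighbourhood $U$ of $\gamma$. Then $\ell(U)$ is a connected subset of the countable set $\Ima(\ell)$, hence a single point, so $U \subseteq K_s$ and $K_s$ is open.

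This argument needs local connectedness of $\G$. If that is not available, I would argue differently. The set $\ell^{-1}([0,s+1])$ is compact, so the countable closed sets $K_t$ with $t \le s+1$ cover it. By a Baire category argument, each point should have a neighbourhood on which $\ell$ is locally constant. This step is delicate in the totally disconnected case, and I would check it carefully, possibly using that $\ell$ restricted to a compact set has a countable compact image, hence with isolated points being dense. As a fallback, I would verify that the later arguments only require the $K_s$ to be Borel sets with open interiors.

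\emph{Condition (6).} By \cref{remark:comments-on-metric-stratification-def}(2), it suffices to show that $d^{(s)}(\gamma,\mu)=0$ forces $\gamma = \mu$ for $\gamma,\mu \in K_s$. If $s(\gamma)=s(\mu)$ and $r(\gamma)=r(\mu)$, then $\gamma\mu^{-1}$ is defined and lies in $\G_{r(\gamma)} \cap \G^{r(\gamma)}$. Since $\G$ is principal, $\gamma\mu^{-1} = r(\gamma)$, so $\gamma=\mu$. Symmetry and the triangle inequality for $d^{(s)}$ follow from those of $d$ via the max.
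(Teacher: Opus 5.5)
\textbf{Gap: openness in condition (3).} Your treatment of (1), (2), (4), (5), of precompactness, and of (6) is correct. Your (6) is exactly the paper's argument via principality. The paper dismisses (1)--(5) as holding ``by inspection''.

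The gap is openness in (3), and it cannot be closed, because under the stated hypotheses it is false. Take $Y=\{0\}\cup\{1/n : n\in\N\}$ and let $\G=Y\times\{1,2\}^2$ be the product of the space $Y$ with the pair groupoid on $\{1,2\}$. So $(y,i,j)(y,j,k)=(y,i,k)$ and $\Go=\{(y,i,i)\}$; this is a compact principal ample groupoid. Put $\ell(y,i,j)=0$ if $i=j$ and $\ell(y,i,j)=1+y$ if $i\neq j$. This $\ell$ has the following properties:
\begin{itemize}
\item it is continuous, and proper since $\G$ is compact;
\item it is symmetric and vanishes exactly on $\Go$;
\item it satisfies the triangle inequality, since a product of two non-units is a unit;
\item its image is countable.
\end{itemize}
Yet $K_1=\ell^{-1}(\{1\})=\{(0,1,2),(0,2,1)\}$ is not open, since $(1/n,1,2)\to(0,1,2)$ while $\ell(1/n,1,2)=1+1/n$.

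\textbf{Why each of your routes fails.}
\begin{itemize}
\item The Baire argument cannot succeed. Baire only tells you that the union of the interiors of the $K_t$ is dense, not that every point is an interior point.
\item The connectedness argument is valid, but only when $\G$ is locally connected. That excludes the ample groupoids the paper is actually interested in.
\item The fallback also fails. The proof of \cref{lemma:compact-subsets-yield-CQMS} uses openness of the $K_i$ twice: to extract a finite subcover of the compact set $B_\ell(t)$, and to extend the Lipschitz partitions of unity by zero. In the example above, $B_\ell(2)=\G$ meets infinitely many $K_s$, with no finite subcover.
\end{itemize}

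\textbf{How to repair the statement.} Add a hypothesis such as: every $s\in\Ima(\ell)$ is isolated in $\Ima(\ell)$. Then $K_s=\ell^{-1}((s-\delta,s+\delta))$ for some $\delta>0$, which is open by continuity of $\ell$. This holds whenever $\ell$ is integer-valued. That covers the only place the paper uses the lemma, namely the AF length function of \cref{def:AF-length-function}, so the later results are unaffected. With this extra hypothesis your proof, like the paper's, is complete. Local connectedness of $\G$, as in your first argument, is an alternative sufficient hypothesis.
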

	
	\begin{proof}
		By inspection we see that conditions (1)-(5) hold for $K_s = \ell^{-1}(\{s\})$ whenever $\ell$ is a proper continuous length function. To verify (6), note that by \cref{remark:comments-on-metric-stratification-def} it suffices to show that given any $s \in \Ima (\ell)$ the statement $d^{(s)}(\gamma , \mu) = 0$ implies $\gamma = \mu$ for $\gamma,\mu \in K_s$. But since $d$ is a metric on $\Go$, we see that $d^{(s)}(\gamma , \mu) = 0 $ implies $s(\gamma) = s(\mu)$ and $r(\gamma) = r(\mu)$, from which $\gamma = \mu$ follows by principality. 
	\end{proof}

	Given a metric $d$ on the compact space $\Go$ 
	inducing the topology,  
	we now fix a metric stratification $\cK = (K_i)_{i \in I}$ of $\G$ with respect to $d$, and define for every $i \in I$ and $f \in C_c(\G)$
		\begin{align*}
		L_{\mathrm{Lip}}^{K_i}(f) := \sup \{ \frac{\vert f(\gamma) - f(\mu) \vert}{d^{(i)}(\gamma, \mu)} \mid \gamma,\mu \in K_i   \}
	\end{align*}
	where $d^{(i)}$ is the metric from \eqref{eq:di-is-a-metric}.  $L_{\mathrm{Lip}}^{K_i}$ therefore measures the Lipschitz constant of $f$ restricted to $K_i$ for the metric induced by the range and source maps through $d^{(i)}$, and is therefore 
	sensitive to our particular choice of metric stratification $\cK$. 
	We then further define
	\begin{align}\label{eq:def-decomposition-Lipschitz-norm}
		L^{\cK}_{\mathrm{Lip}}(f) :=  \sup_{i \in I}   L^{K_i}_{\mathrm{Lip}}(f) 
	\end{align}
	for $f \in C_c(\G)$. Without further assumptions, it is entirely possible that $L^{\cK}_{\mathrm{Lip}}(f) = \infty$ for $f \in C_c(\G)$, and indeed we will soon restrict $C_c(\G)$ to a sup-operator system in order to construct compact quantum metric spaces. We first prove the following easy observation.

	\begin{lemma}\label{lemma:LK-selfadjoint}
		For every $f \in C_c (\G)$, we have $L^{\cK}_{\mathrm{Lip}}(f) = L^{\cK}_{\mathrm{Lip}}(f^*)$. 
	\end{lemma}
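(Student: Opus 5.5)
The plan is to compare the Lipschitz quotients of $f^*$ on a stratum $K_i$ with those of $f$ on the inverse stratum $K_i^{-1}$, using condition (5) of \cref{def:metric-stratification} to reindex the supremum. Recall that $f^*(\gamma) = \overline{f(\gamma^{-1})}$.

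The first step is to show that inversion is an isometry between the strata. Fix $i \in I$. By condition (5), $K_i^{-1} = K_j$ for some $j \in I$, and the map $\gamma \mapsto \gamma^{-1}$ is a bijection $K_i \to K_j$. Because $s(\gamma^{-1}) = r(\gamma)$ and $r(\gamma^{-1}) = s(\gamma)$, the two terms inside the maximum in \eqref{eq:di-is-a-metric} simply swap places. Hence
\begin{align*}
d^{(j)}(\gamma^{-1}, \mu^{-1}) = \max\{ d(r(\gamma), r(\mu)), d(s(\gamma), s(\mu))\} = d^{(i)}(\gamma,\mu)
\end{align*}
for all $\gamma,\mu \in K_i$.

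Next, for $\gamma, \mu \in K_i$ with $\gamma \neq \mu$, we compare the numerators. Since complex conjugation does not change absolute values, we have $|f^*(\gamma) - f^*(\mu)| = |f(\gamma^{-1}) - f(\mu^{-1})|$. Combining this with the isometry from the first step, every quotient appearing in $L^{K_i}_{\mathrm{Lip}}(f^*)$ equals a quotient appearing in $L^{K_j}_{\mathrm{Lip}}(f)$, and conversely, because inversion is bijective. This gives $L^{K_i}_{\mathrm{Lip}}(f^*) = L^{K_j}_{\mathrm{Lip}}(f)$.

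Finally, the assignment $K_i \mapsto K_i^{-1}$ is an involution on $\cK$, and hence a bijection of the index set. Taking the supremum over $i \in I$ in \eqref{eq:def-decomposition-Lipschitz-norm} therefore yields $L^{\cK}_{\mathrm{Lip}}(f^*) = L^{\cK}_{\mathrm{Lip}}(f)$, and this holds even when the value is $\infty$.

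The only subtle point is that condition (5) is phrased in terms of sets rather than indices. Since the sets are pairwise disjoint (condition (2)), the index $j$ is uniquely determined whenever $K_i \neq \emptyset$. Empty strata contribute nothing to the supremum and can be ignored.
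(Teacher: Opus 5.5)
Your proof is correct and follows essentially the same route as the paper. Both observe that inversion is an isometry $(K_i, d^{(i)}) \to (K_i^{-1}, d^{(i^{-1})})$ because $s$ and $r$ swap, deduce $L^{K_i}_{\mathrm{Lip}}(f^*) = L^{K_i^{-1}}_{\mathrm{Lip}}(f)$, and conclude by taking the supremum using condition (5); your remarks on index uniqueness and empty strata are harmless refinements that the paper leaves implicit.
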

	
	\begin{proof}
		Fix $i \in I$, and denote by $d^{(i^{-1})}$ the metric on $K_i^{-1}$ from \eqref{eq:di-is-a-metric}. Importantly, note that for $\gamma, \mu \in K_i$, we have $d^{(i)}(\gamma, \mu) = d^{(i^{-1})}(\gamma^{-1}, \mu^{-1})$. We calculate
		\begin{align*}
			L_\mathrm{Lip}^{K_i}(f^*) &= \sup \{ \frac{\vert f^*(\gamma) - f^*(\mu)\vert}{d^{(i)}(\gamma, \mu)} \mid \gamma, \mu \in K_i  \} \\
			&= \sup \{ \frac{\vert \overline{f(\gamma^{-1})} - \overline{f(\mu^{-1})}\vert}{d^{(i)}(\gamma, \mu)} \mid \gamma, \mu \in K_i   \} \\
			&= \sup \{ \frac{\vert f(\gamma^{-1}) - f(\mu^{-1})\vert}{d^{(i)}(\gamma, \mu)} \mid \gamma, \mu \in K_i   \} \\
			&= \sup \{ \frac{\vert f(\gamma^{-1}) - f(\mu^{-1})\vert}{d^{(i^{-1})}(\gamma^{-1}, \mu^{-1})} \mid \gamma, \mu \in K_i  \} \\
			&= \sup \{ \frac{\vert f(\tilde{\gamma}) - f(\tilde{\gamma})\vert}{d^{(i^{-1})}(\tilde{\gamma}, \tilde{\mu})} \mid \tilde{\gamma}, \tilde{\mu} \in K_i^{-1}  \} \\
			&= L^{K_i^{-1}}_{\mathrm{Lip}}(f).
		\end{align*}
		Since $L^{\cK}_{\mathrm{Lip}}$ is defined in terms of the supremum over all $i \in I$, and $K_i \in \cK$ implies $K_i^{-1} \in \cK$, 
		the result follows. 
	\end{proof}
	
	In alignment with the setup of \cref{sec:prelims-CQMS}, we proceed to specify a  sub-operator system $\cX$ of $\Cred(\G)$ for which  $L^{\cK}_{\mathrm{Lip}}(f) < \infty$ for all $f \in \cX$. We define
	\begin{align}\label{eq:def-Lip-operator-system}
	\Lip_c^{\cK}(\G) := \{ f \in C_c (\G) \mid L^{\cK}_{\mathrm{Lip}}(f) < \infty  \}
	\end{align}
	where we have made explicit the fact that this operator system depends on the metric stratification $\cK$ of $\G$. Note that since $\Go = K_e \in \cK$, the characteristic function $1_{\Go}$ is in 
	$\Lip_c^{\cK}(\G)$. 
	Combined with \cref{lemma:LK-selfadjoint} we deduce that 
	$\Lip_c^{\cK}(\G)$
	is in fact a sub-operator system of $C_c(\G)$. 
	
	\begin{definition}\label{def:total-seminorm}
		Let $\G$ be an \'etale groupoid with compact unit space $\Go$, 
		and suppose $d$ is a metric on $\Go$ inducing the topology.
		Fix a metric stratification $\cK = (K_i)_{i\in I}$ for $\G$ with respect to $d$. 
		Suppose moreover that $\ell \colon \Go \to [0,\infty)$ is a proper continuous length function.  For every $n \geq 1$ we define a \emph{total seminorm} given by
			\begin{align}\label{eq:def-total-seminorm}
			L^{\cK, n} (f) := \max \{ L^n_\ell(f) , L_{\mathrm{Lip}}^{\cK} (f)  \},
		\end{align}
		for 
		$ f \in \Lip_c^{\cK}(\G)$ 
		(cf. \eqref{eq:def-Lip-operator-system}), where 
		\begin{align*}
			L^{n}_\ell(f) = \Vert \delta^n (f) \Vert_{\mathcal{L}_{C(\Go)}(\cE)}
		\end{align*} 
		as in \eqref{eq:definition-commutator-seminorm}, and
		\begin{align*}
			L^{\cK}_{\rm Lip}(f) = \sup_{i \in I}   L^{K_i}_{\mathrm{Lip}}(f) = \sup_{i \in I} \sup \{ \frac{\vert f(\gamma) - f(\mu) \vert}{d^{(i)}(\gamma, \mu)} \mid \gamma,\mu \in K_i   \}
		\end{align*}
		as in \eqref{eq:def-decomposition-Lipschitz-norm}.
	\end{definition}

	Note that by \cref{prop:iterated-commutator-adjointable} and \eqref{eq:def-Lip-operator-system}, we have $L^{\cK, n}(f) < \infty$ for all 
	$f \in \Lip_c^{\cK}(\G)$. 
	We therefore wish to study the following question.
	\begin{question}\label{question:when-do-we-get-cmqs}
		Is 
		$(\Lip_c^{\cK}(\G), L^{\cK, n})$
		a compact quantum metric space?
	\end{question}
	
	\begin{remark}\label{remark:metric-strat-vs-transf-gpds}
		Note that for a transformation groupoid $\G = \Gamma \ltimes X $, the seminorm in \eqref{eq:def-total-seminorm} for the case $n=1$ and when using the metric stratification $(K_g)_{g \in \Gamma}$ from \cref{remark:stratifications-degenerate-cases}, 
		bears a close resemblance to the seminorm defined in \cite{AustadKaadKyed2025} for crossed products using discrete groups. 
		However, there are minor differences. Let $(X,d)$ be a compact metric space. The ``horizontal part'' of the seminorm in \cite[Theorem A]{AustadKaadKyed2025}, will, for a crossed product $C(X) \rtimes \Gamma = C_0 (\Gamma , C(X))$ take the form $\max \{ L_d(f) , L_d (f^*)  \}$, where
		\begin{align*}
			L_d (f) = \sup_{g \in \Gamma} \{ \frac{\vert f(g,x) - f(g,y) \vert }{d(x,y)} \mid x \neq y   \} .
		\end{align*}
		Due to the presence of the $\max$ in the definition of $d^{(i)}$ in \eqref{eq:di-is-a-metric}, we may observe that in general $L_{\rm Lip}^{\cK} (f) \leq \max \{ L_d(f) , L_d (f^*)  \}$. Modifications to the seminorm $L_{\rm Lip}^{\cK}$ would make it a true generalization of the horizontal seminorm from \cite{AustadKaadKyed2025}. However, in light of \cref{remark:smaller-seminorm-CQMS-implies-CQMS}, and because we believe the resulting modified seminorm would obfuscate some arguments in the sequel, we opt to use $L^{\cK}_{\rm Lip}$ from \eqref{eq:def-decomposition-Lipschitz-norm}.  
	\end{remark}
	
	\begin{remark}
		Instead of the seminorm defined in \eqref{eq:def-total-seminorm}, another natural choice would be
		\begin{align*}
			\tilde{L}^{\cK,n}(f) = L^n_\ell(f) 
			+ L_{\mathrm{Lip}}^{\cK}(f).
		\end{align*}
		Note however that $\tilde{L}^{\cK,n}(f)$ and $L^{\cK,n}(f)$ are equivalent seminorms with $L^{\cK,n}(f) \leq  \tilde{L}^{\cK,n}(f) \leq 2 \cdot L^{\cK,n}(f)$ for all 
		$f \in \Lip_c^{\cK}(\G)$, 
		and so they induce equivalent metrics and topologies on  
		$S(\Lip_c^{\cK}(\G))$
		through the Monge-Kantorovi\v{c} metric construction from \eqref{eq:def-monge-kantorovic-metric}.
	\end{remark}

	\subsection{A characterization of metric groupoids yielding compact quantum metric spaces}
	The main result of this section is \cref{thm:new-characterization}, which will provide  a sufficient condition for \cref{question:when-do-we-get-cmqs} to have a positive answer. It does so in terms existence of Fourier multipliers which are compatible with the metric stratification $\cK$ in a sense we 
	now make precise.

	\begin{definition}\label{def:K-continuous-multipliers}
		Suppose $\G$ is an \'etale groupoid with compact unit space $\Go$, and suppose  $d$ is a metric inducing the topology on $\Go$. 
		Fix  a metric stratification $\cK$ of $\G$ with respect to $d$, and 
		let $\phi \in \FS(\G)$. 
		If there exists a real number $C_\phi \geq 0$ for which
		\begin{align*}
			L_\mathrm{Lip}^\cK (m_\phi (f)) \leq C_\phi L_\mathrm{Lip}^\cK(f)
		\end{align*}
		for all 
		$f \in \Lip_c^{\cK}(\G)$,
		we say that $\phi$ is \emph{$\cK$-continuous with coefficient $C_\phi$}. If the coefficient is not important, we just say that $\phi$ is \emph{$\cK$-continuous}.
	\end{definition}
	
	We may now establish the following result, which tells us that if a Fourier multiplier is $\cK$-continuous, then 
	it
	is in fact continuous with respect to the total seminorm $L^{\cK, n}$ for any $n\geq 1$. 

	\begin{lemma}\label{lemma:multipliers-extend-to-Roe}
		Let $\G$ be an \'etale groupoid with compact unit space $\Go$, and let $\ell \colon \G \to [0,\infty)$ be a proper continuous length function. 
		Suppose moreover $d$ is a metric on $\Go$ inducing its topology,
		and fix a metric stratification $\cK$ of $\G$.
		Let $f \in C_c (\G)$ and $\phi \in \FS(\G)$, 
		and denote by $T^\phi$ 
		the extension of $m_\phi$ to $\Cred ( \G \ltimes \beta\G)$ given by \eqref{eq:Roe-algebra-lifted-multiplier}. 
		Then for all $n \geq 1$
		\begin{align*}
			\delta^n(m_\phi (f)) = T^\phi (\delta^n(f)).
		\end{align*}
		Denote by $\Vert T^\phi\Vert$ the multiplier norm of $T^\phi$. 
		If $\phi$ is $\cK$-continuous with coefficient $C_\phi \geq 0$ 
		we immediately get
		\begin{align*}
			L^{\cK,n} (m_\phi (f)) \leq \max \{ \Vert T^\phi \Vert , C_\phi  \} \cdot L^{\cK,n}(f)
		\end{align*}
		for all 
		$f \in \Lip_c^{\cK}(\G)$. 
	\end{lemma}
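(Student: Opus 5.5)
The identity $\delta^n(m_\phi(f)) = T^\phi(\delta^n(f))$ will be checked pointwise on the dense subset $\G \tensor[_s]{*}{_r} \G \subseteq \G\ltimes\beta\G$, using the kernel description from \cref{prop:iterated-commutator-adjointable}. That proposition shows $\delta^n(f) = \tilde{\Lambda}(F_f)$ with
\begin{align*}
	F_f(\gamma,\mu) = (\ell(\gamma\mu) - \ell(\mu))^n f(\gamma), \qquad (\gamma,\mu)\in \G \tensor[_s]{*}{_r} \G ,
\end{align*}
and $F_f$ extends uniquely to an element of $C_c(\G\ltimes\beta\G)$. Since $m_\phi(f)$ need only lie in $\Cred(\G)$ rather than $C_c(\G)$, I first note that when $\phi \in C_c(\G)$ (or more generally $\phi$ is continuous) we have $\phi f \in C_c(\G)$, so \cref{prop:iterated-commutator-adjointable} applies directly to $m_\phi(f)=\phi f$. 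Its kernel is then $F_{\phi f}(\gamma,\mu) = (\ell(\gamma\mu)-\ell(\mu))^n\phi(\gamma)f(\gamma) = \phi(\gamma)F_f(\gamma,\mu)$. By \eqref{eq:Roe-algebra-lifted-multiplier} this is exactly $T^\phi(F_f)$ restricted to $\G \tensor[_s]{*}{_r} \G$.

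Both $F_{\phi f}$ and $T^\phi(F_f)$ lie in $\Cred(\G\ltimes\beta\G)$, and elements there are determined by their values on the dense open set $\G \tensor[_s]{*}{_r} \G$. Indeed, $\Cred$ of an étale groupoid embeds injectively into $C_0$-type functions via $j$, and continuity plus density gives uniqueness. Hence $F_{\phi f} = T^\phi(F_f)$, and applying $\tilde{\Lambda}$ gives $\delta^n(m_\phi(f)) = \tilde\Lambda(T^\phi(F_f))$, which we identify with $T^\phi(\delta^n(f))$ as in the statement. If $\phi \in \FS(\G)$ is not assumed to have compact support, I would first run the argument for $f\in C_c(\G)$ and then check directly from the formula in \cref{prop:iterated-commutator-adjointable} that the pointwise kernel of $\delta^n(\phi f)$ is still $\phi(\gamma)F_f(\gamma,\mu)$, which is all that is needed.

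For the norm estimate I take the norm of the operator on $\E$. Here I have to be careful to pass the bound through $\tilde\Lambda$, which is a $*$-homomorphism and hence contractive:
\begin{align*}
	L^n_\ell(m_\phi(f)) = \Vert \tilde\Lambda(T^\phi(F_f))\Vert \leq \Vert T^\phi(F_f)\Vert_{\Cred(\G\ltimes\beta\G)} \leq \Vert T^\phi\Vert\,\Vert F_f\Vert_{\Cred(\G\ltimes\beta\G)}.
\end{align*}
This is the main obstacle. We need $\Vert F_f\Vert_{\Cred(\G\ltimes\beta\G)} = \Vert\tilde\Lambda(F_f)\Vert = L^n_\ell(f)$, which requires $\tilde\Lambda$ to be isometric (faithful) on $\Cred(\G\ltimes\beta\G)$. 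To prove this I would use the standard fact that the representation of the uniform Roe-type algebra $C(\beta\G)\rtimes_{\rm red}\G$ on $\E$ is faithful: the fibers $\ell^2(\G_u)$ together with the dense orbit of $\G \tensor[_s]{*}{_r} \G$ in $\beta\G$ give a family of regular representations that detects the reduced norm. Failing that, I would cite \cite[Lemma 7.5 and surrounding]{DelarocheExactGroupoidsV3}, where $\Cred(\G\ltimes\beta\G)$ is realised as a concrete subalgebra of $\mathcal{L}(\E)$.

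Finally, $\cK$-continuity gives $L^\cK_{\rm Lip}(m_\phi f)\leq C_\phi L^\cK_{\rm Lip}(f)$. Taking the maximum of the two estimates yields $L^{\cK,n}(m_\phi(f))\leq\max\{\Vert T^\phi\Vert,C_\phi\}\,L^{\cK,n}(f)$.
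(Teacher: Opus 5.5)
Your proposal is correct and follows essentially the same route as the paper. Both arguments identify the kernel of $\delta^n(m_\phi(f))$ on $\G \tensor[_s]{*}{_r} \G$ as $\phi(\gamma)(\ell(\gamma\mu)-\ell(\mu))^n f(\gamma) = T^\phi(F)(\gamma,\mu)$, bound the commutator part by $\Vert T^\phi\Vert$, and combine this with $\cK$-continuity. The faithfulness of $\tilde{\Lambda}$ that you flag as the main obstacle is exactly what the paper uses implicitly when it writes $\Cred(\G\ltimes\beta\G)\subseteq \curlyL_{C(\Go)}(\E)$ and equates the two norms, so your extra care there only makes the argument more explicit. Likewise, your hedge about whether $\phi f\in C_c(\G)$ is unnecessary, since elements of $\FS(\G)$ are always continuous.
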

	
	\begin{proof}
		Note first that 
		$\delta^n(f) \in \Cred ( \G \ltimes \beta\G)$
		by \cref{prop:iterated-commutator-adjointable}, The multiplier $T^\phi$ exists by \cref{remark:existence-and-boundedness-of-Tphi} and acts as
		\begin{align*}
			(T^\phi F)(\gamma, \mu) = \phi(\gamma) F(\gamma, \mu)
		\end{align*}
		for $(\gamma, \mu) \in \G \tensor[_s]{*}{_r} \G$ and  $F \in \Cred( \G \ltimes \beta \G)$. 
		Let now 
		$F(\gamma, \mu) = (\ell(\gamma \mu) - \ell(\mu))^n f(\gamma)$. 
		We saw in the proof of \cref{prop:iterated-commutator-adjointable} that we may view  
		$F \in \Cred( \G \ltimes \beta \G) \subseteq \curlyL_{C(\Go)}(\E)$. 
		By \eqref{eq:action-of-Roe-alg-on-E} it follows that
		\begin{align*}
			\tilde{\Lambda}(\delta^n (m_\phi (f)))(\xi) (\gamma) &= \sum_{\mu \in \G_{s(\gamma)}} (\ell(\gamma) - \ell(\mu))^n (m_\phi (f))(\gamma \mu^{-1}) \xi(\mu) \\
			&= \sum_{\mu \in \G_{s(\gamma)}} \phi(\gamma \mu^{-1}) (\ell(\gamma) - \ell(\mu))^n  f(\gamma \mu^{-1}) \xi(\mu) \\
			&= \tilde{\Lambda}(T^\phi(F)) (\xi)(\gamma)
		\end{align*}
		for all $\xi \in \E$ and all $\gamma \in \G$.
		Thus for all 
		$f \in \Lip_c^{\cK}(\G)$
		\begin{align*}
			L_\ell^n(m_\phi (f)) &= \Vert \delta^n (m_\phi (f))\Vert_{\curlyL_{C(\Go)}(\E)} \
			= \Vert T^\phi(F) \Vert_{\Cred(\G \ltimes \beta \G)} \\ 
			&\leq \Vert T^\phi \Vert \Vert F \Vert_{\Cred(\G \ltimes \beta \G)} 
			= \Vert T^\phi \Vert \Vert \delta^n(f) \Vert_{\curlyL_{C(\Go)}(\E)} = \Vert T^\phi \Vert L_\ell^n (f).
		\end{align*}
		We combine this with the assumption $L_{\mathrm{Lip}}^{\cK} (m_\phi (f)) \leq C_\phi L_{\mathrm{Lip}}^{\cK} ( f)$ for all 
		$f \in \Lip_c^{\cK}(\G)$
		to obtain 
		\begin{align*}
			L^{\cK,n} (m_\phi (f)) \leq \max \{ \Vert T^\phi \Vert , C_\phi  \} \cdot L^{\cK,n}(f)
		\end{align*}
		for all 
		$f \in \Lip_c^{\cK}(\G)$, 
		which finishes the proof. 
	\end{proof}

	To give our characterization below, we will make use of \cref{prop:Ozawa-Rieffel-characterization}. It is therefore useful to specify what the set 
	$E^\sigma_L$
	looks like for our candidate compact quantum metric space 
	$(\Lip_c^{\cK}(\G), L^{\cK,n})$.
	We first define 
	\begin{align}\label{eq:def-E-set}
		E_{\cK, n} := \{ f \in \Lip_c^\cK (G) \mid L^{\cK, n}(f) \leq 1\}.
	\end{align}
	Now, 
	fix
	any probability measure $\mu$ on $\Go$ and resulting state $\sigma=\phi_\mu = \overline{\mu} \circ E  \in S(\Cred(\G))$ as in \cref{sec:prelims-groupoids}. Then
	the restriction of 
	$\sigma$ 
	is 
	a state on
	$\Lip_c^{\cK}(\G)$
	and 
	$E^\sigma_L$
	takes the  form 
	\begin{align}\label{eq:BL-for-groupoids}
	E^\sigma_{\cK, n}: = \{ f \in E_{\cK, n} \mid \sigma(f) = 0  \}.
	\end{align}
	In the sequel, we will want to cut 
	$ E^\sigma_{\cK, n}$
	down to a smaller set using Fourier multipliers. More specifically, we will want to focus on the functions supported on elements of length smaller than some $t \geq 0$. We set
	\begin{equation}\label{eq:BLt-for-groupoids}
		\begin{split}
		E^\sigma_{\cK, n}[t] :&= \{  f\in \Lip_c^{\cK}(\G) \mid \sigma (f) = 0, L^{\cK, n}(f) \leq 1 \text{ and $f(\gamma) = 0$ for $\ell(\gamma) > t$}    \}\\
		&= \{ f \in E^\sigma_{\cK, n} \mid f(\gamma) = 0 \text{ for $\ell(\gamma)> t$}\}.
		\end{split}
	\end{equation}
	We record the following lemma, which will be of importance in the proof of \cref{thm:new-characterization}, and which could also be of independent interest. 
	
	\begin{lemma}\label{lemma:compact-subsets-yield-CQMS}
		Let $\G$ be an \'etale groupoid with compact unit space $\Go$, and let $\ell \colon \G \to [0,\infty)$ be a proper continuous length function. 
		Suppose moreover $d$ is a metric on $\Go$ inducing its topology, and fix a metric stratification $\cK$ of $\G$.
		Let 
		$\cX \subseteq  \Lip_c^{\cK}(\G)$
		be a sub-operator system such that there is $t \geq 0$ for which $\supp f \subseteq B_\ell(t)$ for all $f \in \cX$. 
		Then $(\cX, (L^{\cK, n})_{\vert_{\cX}})$ is a compact quantum metric space for all $n\in \N$. In particular, if $\G$ is compact, 
		then 
		$( \Lip_c^{\cK}(\G), L^{\cK, n})$
		is a compact quantum metric space for any $n \in \N$. 
	\end{lemma}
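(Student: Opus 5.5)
We will verify the criterion of \cref{prop:Ozawa-Rieffel-characterization}. Fix a probability measure $\mu$ on $\Go$ and put $\sigma = \phi_\mu$. It then suffices to show that
\begin{align*}
	\{ f \in \cX \mid L^{\cK,n}(f) \leq 1,\ \sigma(f) = 0 \} \subseteq E^\sigma_{\cK,n}[t]
\end{align*}
is norm-totally bounded in $\Cred(\G)$. In fact we will show that $E^\sigma_{\cK,n}[t]$ itself is totally bounded. The key fact is that the $I$-norm dominates the reduced norm. Because $\supp f \subseteq B_\ell(t)$, \cref{assumption:uniform-fiber-cardinality-bound} gives
\begin{align*}
	\Vert f \Vert_{\Cred(\G)} \leq \Vert f \Vert_I \leq F_t \Vert f \Vert_\infty .
\end{align*}
So it is enough to show that $E^\sigma_{\cK,n}[t]$ is totally bounded in the sup-norm. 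We will use only the Lipschitz part $L^{\cK}_{\rm Lip} \leq 1$ together with $\sigma(f)=0$; the commutator part $L^n_\ell$ is not needed, which is why the result holds for every $n$.

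\textbf{Step 1: finitely many strata.} $B_\ell(t)$ is compact because $\ell$ is proper, and the $K_i$ form an open cover of $\G$ by conditions (1) and (3) of \cref{def:metric-stratification}. Hence $B_\ell(t) \subseteq K_{i_1} \cup \dots \cup K_{i_N}$ for finitely many indices, and by disjointness every $f \in E^\sigma_{\cK,n}[t]$ vanishes off these $K_{i_j}$. Each $(K_{i_j}, d^{(i_j)})$ is totally bounded by condition (6), and on it $f$ is $1$-Lipschitz.

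\textbf{Step 2: a uniform sup-bound.} We need a bound on $\Vert f\Vert_\infty$ that is uniform over $E^\sigma_{\cK,n}[t]$, and this is the main obstacle. On $K_e = \Go$, the restriction $f|_{\Go}$ is $1$-Lipschitz for $d$ (since $d^{(e)} = d$) and satisfies $\int f|_{\Go}\, d\mu = 0$. Therefore it vanishes at some point, or its real and imaginary parts do, and so $|f| \leq 2\,\mathrm{diam}(\Go)$ on $\Go$. For $i \neq e$, however, $\sigma$ gives no information about $f|_{K_i}$. For example, the function $1_{K_i} + 1_{K_i^{-1}}$, suitably adjusted to be continuous, could have zero Lipschitz constant and still be large. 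We plan to rule this out using either $L^n_\ell$ or the fact that $f$ vanishes outside $B_\ell(t)$.

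The first attempt is the vanishing. Near a boundary point of $B_\ell(t) \cap K_i$ inside $K_i$, the function $f$ is zero, and $f$ is Lipschitz with respect to $d^{(i)}$, which gives $|f| \leq \mathrm{diam}(K_i, d^{(i)})$ on $K_i$. This works provided $K_i \not\subseteq B_\ell(t)$. If instead $K_i \subseteq B_\ell(t)$, the argument fails. In that case we will use $L^n_\ell$: evaluate $\delta^n(f)$ on vectors supported in $\G_u$ to extract a lower bound of the form $|(\ell(\gamma) - \ell(s(\gamma)))^n f(\gamma)| = \ell(\gamma)^n |f(\gamma)|$. Concretely, apply $\delta^n(f)$ to $\xi = \delta_u$ localized near $s(\gamma)$; then $\ell(\gamma)$ is bounded below by a positive constant $c$ on the compact set $B_\ell(t) \setminus K_e$, using continuity of $\ell$ and the fact that $\ell > 0$ off the clopen $\Go$. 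This yields $\Vert f|_{\G\setminus\Go}\Vert_\infty \leq c^{-n}$. This second argument in fact handles all $i \neq e$ uniformly, so we would use it for all such strata.

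\textbf{Step 3: Arzelà--Ascoli.} The restrictions of $f \in E^\sigma_{\cK,n}[t]$ to the finitely many $K_{i_j}$ form a uniformly bounded, uniformly $1$-Lipschitz family on totally bounded metric spaces. Such a family is totally bounded in sup-norm, which can be checked by discretizing on a finite $\varepsilon$-net of each $K_{i_j}$. Combined with $\Vert \cdot\Vert \leq F_t \Vert\cdot\Vert_\infty$ from the key fact above, this gives norm-total boundedness, so $(\cX, (L^{\cK,n})_{\vert_{\cX}})$ is a compact quantum metric space. For the last claim, if $\G$ is compact then $\ell$ is bounded on $\G$, so $t = \sup \ell$ works and we can take $\cX = \Lip_c^{\cK}(\G)$.
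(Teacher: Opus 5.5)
Your proof is correct and is essentially the paper's argument. It uses the same finite cover of $B_\ell(t)$ by strata, the same domination $\Vert\cdot\Vert\le\Vert\cdot\Vert_I\le F_t\Vert\cdot\Vert_\infty$ coming from the fibre-cardinality bound, and the same two uniform sup-bounds: off $\Go$ from $\delta^n(f)1_{\Go}=\ell^n f$ together with $\ell\ge c>0$ on $B_\ell(t)\setminus\Go$, and on $\Go$ from $\sigma(f)=0$ and $\mathrm{diam}(\Go)$. Your Arzel\`a--Ascoli discretization on finite $d^{(i)}$-nets plays the role of the paper's Lipschitz partitions of unity, and it spares you from having to extend those functions continuously by zero.

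Two slips need cleaning up. First, your opening remark that $L^n_\ell$ is not needed is false, as your own Step~2 shows. Second, on a disconnected $\Go$ (e.g.\ a Cantor set) $\mathrm{Re}\, f|_{\Go}$ need not vanish anywhere. It does, however, take both a nonnegative and a nonpositive value, and that is all you need for $\Vert f|_{\Go}\Vert_\infty\le 2\,\mathrm{diam}(\Go)$.
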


	\begin{proof}
		Fix a state $\sigma \in S(\Cred(\G))$ be given by integration against a probability measure $\mu$ on $\Go$, that is, $\sigma =  \overline{\mu} \circ E$. 
		This restricts to a state on $\cX$ which we also denote by  
		$\sigma$.
		To show the first statement it suffices by  \cref{prop:Ozawa-Rieffel-characterization} to show that the set 
		\begin{align*}
			\{ f \in \cX \mid \sigma(f) = 0 \text{ and } L^{\cK,n}(f) \leq 1  \}
		\end{align*}
		is totally bounded in norm. 
		Since this set is contained in 
		$E^\sigma_{\cK,n}[t]$ from \eqref{eq:BLt-for-groupoids} 
		by assumption on $\cX$, we prove that 
		$E^\sigma_{\cK,n}[t]$
		is totally bounded, from which the first statement will follow.  
		Let therefore $\varepsilon > 0$ be given. We wish to show that 
		$E^\sigma_{\cK,n}[t]$
		can be covered by finitely many $\varepsilon$-balls in the $\Cred(\G)$-norm. 
		
		Note first that $B_\ell(t)$ is compact since $\ell$ is proper. 
		Find a cover of $B_\ell(t)$ by sets from $\cK$, which by compactness of $B_\ell(t)$ has a finite subcover $B_\ell(t) \subseteq \coprod_{i=1}^m K_i$, after reindexing if necessary. Importantly, since the sets in $\cK$ are disjoint and $B_\ell(t)^{-1} = B_\ell(t)$, we have that if $K_i$ is in the subcover, so is $K_i^{-1}$. 
		Let $d^{(i)}$ be the metric induced on $K_i$ through \eqref{eq:di-is-a-metric} so that $(K_i, d^{(i)})$ is a totally bounded metric space for every $i$. 
		Then, for every $i=1,\ldots , m$ we may find finitely many points $\gamma_1^{(i)}, \ldots , \gamma_{h_i}^{(i)}$ for which $K_i \subseteq \cup_{j=1}^{h_i} B_{d^{(i)}}(\gamma_j^{(i)}, \varepsilon)^\circ$, where $B_{d^{(i)}}(\gamma_j^{(i)}, \varepsilon)^\circ$ is the open $\varepsilon$-ball around $\gamma_j^{(i)}$ in the metric $d^{(i)}$.

		Now, for each $i = 1, \ldots , m$, find a partition of unity subordinate to $(B_{d_i}(\gamma_j^{(i)}, \varepsilon)^\circ)_{j=1}^{h_i}$, say $(\rho_j^{(i)})_{i=1}^{h_i}$. We may arrange that all the $\rho_j^{(i)}$ are Lipschitz continuous with respect to the metrics $d^{(i)}$ \cite[Theorem 2.6.5]{CobzasMiculescuNicolae-LipschitzFunctions2019}.   
		Note moreover that as $\rho_j^{(i)}$ is a Lipschitz continuous  function on $K_i$ for each $i,j$, and subordinate to open subsets of $K_i \subseteq \G$, 
		the extension by zero of $\rho_j^{(i)}$ can be viewed as an element of 
		$\Lip_c^{\cK}(\G)$ 
		for each $i=1, \ldots ,m$ and $j = 1, \ldots, h_i$. 
		
		Now, for $f \in E^\sigma_{\cK, n}[t]$, write $f = \sum_{i=1}^m f_{\vert_{K_i}}$, which we may do uniquely as the $K_i$ are disjoint, and set
		\begin{align*}
		\Phi_\varepsilon^{(i)} (f) 
			:= \sum_{j=1}^{h_i} f_{\vert_{K_i}} (\gamma_j^{(i)}) \rho_j^{(i)} = \sum_{j=1}^{h_i} f (\gamma_j^{(i)}) \rho_j^{(i)}.
		\end{align*}
		We then have a finite-dimensional approximation of $f$ through
		\begin{align*}
			f \approx \Phi_\varepsilon(f) := 
		\sum_{i=1}^m \Phi_\varepsilon^{(i)} (f) 
			=  \sum_{i=1}^m \sum_{j=1}^{h_i} f (\gamma_j^{(i)}) \rho_j^{(i)}.
		\end{align*}
		Recall the definition of the $I$-norm from \eqref{eq:def-I-norm} and that it dominates the reduced $C^*$-algebra norm. 
		Note that since we are considering finitely many $K_i$ and $\max_{i\in \{1, \ldots ,n\}} \sup_{\gamma \in K_i} \ell(\gamma) < \infty$, \cref{assumption:uniform-fiber-cardinality-bound} yields that there must exist $F >0$ which is an upper bound on the quantities $\vert\{x \in \G_u \cap K_i\}\vert$ and $\vert\{x \in \G^u \cap K_i\}\vert$ for $i = 1, \ldots m$.
		Now, as $K_e$ is in the subcover, we  
		calculate
		\begin{align*}
			\Vert &f - \sum_{i=1}^m \sum_{j=1}^{h_i} f(\gamma_j^{(i)}) \rho_j^{(i)} \Vert = \Vert \sum_{i=1}^m f_{\vert_{K_i}} - \sum_{i=1}^m \sum_{j=1}^{h_i} f(\gamma_j^{(i)}) \rho_j^{(i)} \Vert \\
			&\leq \sum_{i=1}^m \Vert f_{\vert_{K_i}} - \sum_{j=1}^{h_i} f_{\vert_{K_i}} (\gamma_j^{(i)}) \rho_j^{(i)} \Vert \\
			&\leq \sum_{i=1}^m \Vert f_{\vert_{K_i}} - \sum_{j=1}^{h_i} f_{\vert_{K_i}} (\gamma_j^{(i)}) \rho_j^{(i)} \Vert_I \\
			&= \sum_{i=1}^m \max \{ \sup_{u \in \Go}  \sum_{\mu \in \G_u} \big\vert f_{\vert_{K_i}} (\mu) - \sum_{j=1}^{h_i} f_{\vert_{K_i}} (\gamma_j^{(i)}) \rho_j^{(i)}(\mu)\big\vert, \sup_{u \in \Go}  \sum_{\mu \in \G^u} \big\vert f_{\vert_{K_i}} (\mu) - \sum_{j=1}^{h_i} f_{\vert_{K_i}} (\gamma_j^{(i)}) \rho_j^{(i)}(\mu)\big\vert \} \\
			&\leq \sum_{i=1}^m \max \{  \sup_{u \in \Go}  \sum_{\mu \in \G_u} \sum_{j=1}^{h_i}\big\vert f_{\vert_{K_i}} (\mu) -  f_{\vert_{K_i}} (\gamma_j^{(i)})\big\vert \rho_j^{(i)}(\mu) ,  \sup_{u \in \Go}  \sum_{\mu \in \G^u} \sum_{j=1}^{h_i}\big\vert f_{\vert_{K_i}} (\mu) -  f_{\vert_{K_i}} (\gamma_j^{(i)})\big\vert \rho_j^{(i)}(\mu) \} \\
			&\leq \sum_{i=1}^m \max \{ \sup_{u \in \Go}  \sum_{\mu \in \G_u\cap K_i} \sum_{j=1}^{h_i} d^{(i)} (\mu, \gamma_j^{(i)}) L^{\cK}_{\mathrm{Lip}}(f_{\vert_{K_i}}) \rho_j^{(i)}(\mu)  , \sup_{u \in \Go}  \sum_{\mu \in \G^u\cap K_i} \sum_{j=1}^{h_i} d^{(i)} (\mu, \gamma_j^{(i)}) L^{\cK}_{\mathrm{Lip}}(f_{\vert_{K_i}}) \rho_j^{(i)}(\mu)     \} \\
			&=\sum_{i=1}^m \max \{ \sup_{u \in \Go}  \sum_{\mu \in \G_u\cap K_i}  \varepsilon \cdot L^{\cK}_{\mathrm{Lip}}(f_{\vert_{K_i}})   , \sup_{u \in \Go}  \sum_{\mu \in \G^u\cap K_i}  \varepsilon \cdot L^{\cK}_{\mathrm{Lip}}(f_{\vert_{K_i}})   \} \\
			&\leq \sum_{i=1}^m \max \{   F  \varepsilon \cdot L^{\cK}_{\mathrm{Lip}}(f_{\vert_{K_i}})   , F  \varepsilon \cdot L^{\cK}_{\mathrm{Lip}}(f_{\vert_{K_i}})   \} \\
			&=  m F  \varepsilon \cdot L^{\cK}_{\mathrm{Lip}}(f_{\vert_{K_i}})   ,  \\
			&\leq m F \varepsilon L^{\cK,n}(f).
		\end{align*}
		Thus we may arrange that 
		$E^\sigma_{\cK,n}[t]$
		is less than $\varepsilon$ away in $\Cred(\G)$-norm from its image under $ \Phi_\varepsilon$ in the finite-dimensional subspace of 
		$\Cred(\G)$ 
		spanned by $((\rho_j^{(i)})_{j=1}^{h_i})_{i=1}^m$. We proceed to show that $\Ima (\Phi_\varepsilon)$ is bounded in operator norm. 
		
		For this purpose, let 
		$f \in E^\sigma_{\cK,n}[t]$, 
		and 
		\begin{align*}
			\Phi_\varepsilon(f) = \sum_{i=1}^m \sum_{j=1}^{h_i} f(\gamma_j^{(i)}) \rho_j^{(i)}.
		\end{align*}
		We will show there is a uniform upper bound on the reduced $C^*$-algebra norm on such elements. First, we obtain the upper bound 		
		\begin{align*}
			\Vert \Phi_\varepsilon (f) \Vert \leq \sum_{i=1}^m \sum_{j=1}^{h_i} \vert f(\gamma_j^{(i)}) \vert \Vert \rho_j^{(i)} \Vert 
			\leq \max_{i,j} \{ \Vert \rho_j^{(i)}\Vert \} \cdot \sum_{i=1}^m \sum_{j=1}^{h_i} \vert f(\gamma_j^{(i)})\vert,
		\end{align*}
		where $\max_{i,j} \{ \Vert \rho_j^{(i)}\Vert \}$ is certainly finite as there are only finitely many $\rho_j^{(i)}$. The value is independent of $f$. We can obtain a further bound by noting that  
		\begin{align*}
			\sum_{i=1}^m \sum_{j=1}^{h_i} \vert f(\gamma_j^{(i)})\vert \leq m \cdot \max\{ h_i \mid i=1,\ldots,m \} \cdot \max \{ \vert f(\gamma_j^{(i)}) \},
		\end{align*}
		where only the last factor depends on $f$. As such, it will suffice to show that $\vert f (\gamma_j^{(i)}) \vert $ is uniformly bounded above for all $f \in E_{\cK, n}[t]$ and $i= 1,\ldots, m$, $j=1,\ldots, h_i$.

		We first consider the case $i \neq e$. Let $A_n > 0$ be a real number which bounds the $\infty$-norm by the $\ell^n$-weighted $2$-norm for functions with compact support away from the unit space (recall that $\ell$ may take values in $(0,1)$). That is, $\Vert f \Vert_\infty \leq A_n \cdot \sup_{u\in \Go} \Vert f \cdot \ell^n \Vert_{\ell^2(\G_u)}$ for all $f \in C_c (\G)$ with $\supp(f) \subseteq \G \setminus \Go$.  
		We then calculate 
		\begin{align*}
			\vert f(\gamma_j^{(i)}) \vert &\leq \Vert f\vert_{K_i} \Vert_\infty \leq A_n \cdot \sup_{u \in \Go} \Vert f\vert_{K_i} \cdot \ell^n \Vert_{\ell^2(\G_u)} \\
			&\leq A_n \cdot \sup_{u \in \Go} \Vert f \cdot \ell^n \Vert_{\ell^2(\G_u)}
			= A_n \cdot \Vert \delta^n (f) 1_{\Go} \Vert_{\E} \\
			&\leq A_n\cdot \Vert \delta^n(f) \Vert_{\curlyL_{C(\Go)}(\E)} 
			\leq A_n \cdot L^{\cK,n}(f). 
		\end{align*}  
		Since  $L^{\cK,n}(f)$ is uniformly bounded for 
		$f \in E^\sigma_{\cK,n}[t]$
		this covers the case $i \neq e$.
		
		If $i=e$, we take a different approach. Since 
		$f \in E^\sigma_{\cK,n}[t]$
		is subject to 
		$\sigma(f) = \overline{\mu} \circ E(f) = \overline{\mu}(f_{\vert_{\Go}}) = 0$ 
		and $f\vert_{\Go} \in C(\Go)$, we know that $f\vert_{\Go}(u) = 0$ for some $u \in \Go$. We may therefore calculate an upper bound on the supremum norm of $f\vert_{\Go} = f_{\vert_{K_e}}$ as follows
		\begin{align*}
			\Vert f\vert_{\Go} \Vert_\infty \leq \mathrm{diam}(\Go, d) \cdot L^{K_e}_{\mathrm{Lip}}(f \vert_{\Go}) \leq \mathrm{diam}(\Go, d)  \cdot L^{\cK, n}(f),
		\end{align*} 
		where $\mathrm{diam}(\Go, d)$ is the diameter of the compact metric space $(\Go, d)$. 
		There is therefore a uniform upper bound on the values $\vert f (\gamma_j^{(i)}) \vert $, from which we deduce that $\Ima( \Phi_\varepsilon)$ is a bounded set in operator norm. Since it spans a finite-dimensional subspace of $\Cred (\G)$, we deduce that $\Ima( \Phi_\varepsilon)$ is totally bounded. The first part of the lemma then follows. 
		
		The statement that for $\G$ compact, 
		$( \Lip_c^{\cK}(\G), L^{\cK, n})$
		is a compact quantum metric space for every $n \geq 1$ follows easily by the first part.  
		Since $\ell$ is bounded on $\G$, we observe that 
		$E^\sigma_{\cK, n} \subseteq E^\sigma_{\cK, n}[t]$
		for $t$ large enough, for any $n \geq 1$. The conclusion now follows by the first part of the lemma.
	\end{proof}

	We are now in a position to prove the following result, which 
	constitutes the first main theorem of the article. 

	\begin{theorem}\label{thm:new-characterization}
		Let $\G$ be an \'etale groupoid with compact unit space $\Go$, and let $\ell \colon \G \to [0,\infty)$ be a proper continuous length function. 
		Suppose moreover $d$ is a metric on $\Go$ inducing its topology, and fix a metric stratification $\cK$ of $\G$.
		Let $E_{\cK, n}$ be as in \eqref{eq:def-E-set}. 
		
		Consider the statements
		\begin{enumerate}
			\item 
			For every $\varepsilon > 0$ there is 
			$\phi \in C_c(\G)$ 
			such that $m_\phi$ is unital and $\cK$-continuous, and such that
			\begin{align}\label{eq:norm-approximation-yields-CQMS}
				\sup_{f\in E_{\cK, n}} \Vert f - m_{\phi}(f) \Vert_{\Cred(\G)} < \varepsilon.
			\end{align}
			\item $( \Lip_c^{\cK}(\G), L^{\cK,n})$  
			is a compact quantum metric space.
		\end{enumerate}
		Then (1) implies (2).

		Moreover, the converse implication holds if $\G$ admits a sequence of functions
		$(\phi_j)_{j \in \N} \subseteq C_c(\G)$ converging uniformly to $1$ on compact subsets, 
		such that $m_{\phi_j}$ is unital and $\cK$-continuous with coefficient 
		$D_j \geq 0$
		for all $j$, and
		satisfying $\sup_j \Vert m_{\phi_j} \Vert < \infty$.
		
	\end{theorem}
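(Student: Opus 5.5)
For (1) $\Rightarrow$ (2) we verify the criterion of \cref{prop:Ozawa-Rieffel-characterization} with the state $\sigma = \overline{\mu}\circ P$, i.e.\ we show that $E^\sigma_{\cK,n}$ is norm-totally bounded. Fix $\varepsilon>0$ and take $\phi\in C_c(\G)$ as in (1), with $\cK$-continuity coefficient $C_\phi$. Choose $t\geq 0$ with $\supp\phi\subseteq B_\ell(t)$, which is possible because $\supp \phi$ is compact and $\ell$ is continuous. By \cref{lemma:multipliers-extend-to-Roe} we have, for every $f\in E^\sigma_{\cK,n}$,
\begin{align*}
L^{\cK,n}(m_\phi(f))\leq M\cdot L^{\cK,n}(f)\leq M, \qquad M:=\max\{\Vert T^\phi\Vert, C_\phi\},
\end{align*}
and $M<\infty$ by \cref{remark:existence-and-boundedness-of-Tphi}. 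Moreover $m_\phi(f)$ is supported in $B_\ell(t)$. Since $m_\phi$ is unital, $\phi\equiv 1$ on $\Go$, so $m_\phi(f)_{\vert_{\Go}}=f_{\vert_{\Go}}$ and hence $\sigma(m_\phi(f))=\sigma(f)=0$. Consequently $m_\phi(E^\sigma_{\cK,n})\subseteq M\cdot E^\sigma_{\cK,n}[t]$. The proof of \cref{lemma:compact-subsets-yield-CQMS} shows that $E^\sigma_{\cK,n}[t]$ is totally bounded, so we cover it by finitely many $\varepsilon/M$-balls. Because \eqref{eq:norm-approximation-yields-CQMS} gives $\Vert f-m_\phi(f)\Vert<\varepsilon$, the same centres cover $E^\sigma_{\cK,n}$ with $2\varepsilon$-balls. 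The degenerate case $M=0$ is trivial.

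For the converse, assume (2) holds and that the sequence $(\phi_j)$ exists with $B:=\sup_j\Vert m_{\phi_j}\Vert<\infty$. Since $m_{\phi_j}$ is unital, it fixes scalars, so for $f\in E_{\cK,n}$ we have $f-m_{\phi_j}(f)=g-m_{\phi_j}(g)$ with $g=f-\sigma(f)1\in E^\sigma_{\cK,n}$. Indeed $L^{\cK,n}(g)=L^{\cK,n}(f)$, because $\C\subseteq\ker L^{\cK,n}$. It therefore suffices to prove $\sup_{g\in E^\sigma_{\cK,n}}\Vert g-m_{\phi_j}(g)\Vert\to 0$. By (2) and \cref{prop:Ozawa-Rieffel-characterization}, $E^\sigma_{\cK,n}$ is totally bounded. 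Pick a finite $\varepsilon$-net $g_1,\dots,g_N\in E^\sigma_{\cK,n}\subseteq C_c(\G)$. Then
\begin{align*}
\Vert g-m_{\phi_j}(g)\Vert\leq \Vert g-g_k\Vert+\Vert g_k-m_{\phi_j}(g_k)\Vert+\Vert m_{\phi_j}\Vert\Vert g_k-g\Vert\leq (1+B)\varepsilon+\max_k\Vert g_k-m_{\phi_j}(g_k)\Vert .
\end{align*}
For each fixed $g_k$, the function $(1-\phi_j)g_k$ is supported in the compact set $\supp g_k$, on which $\phi_j\to1$ uniformly. Using $\Vert\cdot\Vert\leq\Vert\cdot\Vert_I$ together with the uniform fiber bound $F$ on $\supp g_k$ from \cref{assumption:uniform-fiber-cardinality-bound} (valid since $\supp g_k\subseteq B_\ell(t_k)$ for some $t_k$), we get $\Vert g_k-m_{\phi_j}(g_k)\Vert_I\leq F\,\Vert g_k\Vert_\infty\sup_{\supp g_k}|1-\phi_j|\to0$. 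Hence for $j$ large enough the supremum is less than $(2+B)\varepsilon$, and $\phi=\phi_j$ witnesses (1); it is unital and $\cK$-continuous by hypothesis.

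The main obstacle is in the first direction. We must make sure that the totally bounded set produced by \cref{lemma:compact-subsets-yield-CQMS} really contains $m_\phi(E^\sigma_{\cK,n})$ up to the scaling by $M$. This requires that $m_\phi(f)$ lies in $\Lip_c^{\cK}(\G)$, which follows from $\cK$-continuity, and that $\sigma$ is preserved, which follows from unitality. We must also note that the finiteness of $\Vert T^\phi\Vert$ for $\phi\in C_c(\G)$ comes from \cref{remark:existence-and-boundedness-of-Tphi} rather than from any bound in terms of $\Vert m_\phi\Vert$.
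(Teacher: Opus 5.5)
Your proposal is correct and follows essentially the same route as the paper. For (1)$\Rightarrow$(2) you map $E^\sigma_{\cK,n}$ into $M\cdot E^\sigma_{\cK,n}[t]$ via \cref{lemma:multipliers-extend-to-Roe} and then invoke \cref{lemma:compact-subsets-yield-CQMS}; for the converse you use a finite net, the uniform bound on $\Vert m_{\phi_j}\Vert$, and unitality to pass from $E^\sigma_{\cK,n}$ to $E_{\cK,n}$. Two steps the paper only asserts are filled in by you: the explicit choice $\sigma=\overline{\mu}\circ P$, so that $\sigma\circ m_\phi=\sigma$ because $\phi_{\vert_{\Go}}=1$, and the $I$-norm justification that $\Vert g_k-m_{\phi_j}(g_k)\Vert\to 0$.
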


	\begin{proof}
		Throughout the proof we fix a state $\sigma \in S(\Cred(G))$, and we denote the restriction to  $\Lip_c^{\cK}(\G)$ by $\sigma$ as well. 
		
		Suppose first (1) holds. In order to verify that $( \Lip_c^{\cK}(\G), L^{\cK,n})$ is a compact quantum metric space, it suffices to show that the set $E^\sigma_{\cK, n}$ defined as in \eqref{eq:BL-for-groupoids} is totally bounded. 
		Let therefore $\varepsilon > 0$, and choose 
		$\phi \in C_c(\G)$ 
		such that
		$m_\phi$ is unital and $\cK$-continuous, and such that
		\begin{align*}
			\sup_{f\in E_{\cK, n}} \Vert f - m_{\phi}(f) \Vert < \varepsilon.
		\end{align*}
		Combining the assumption that $\phi$ is $\cK$-continuous with \cref{lemma:multipliers-extend-to-Roe}, it follows that there exists $D >0$ such that $L^{\cK,n}(m_\phi f) \leq D \cdot L^{\cK,n}(f)$ for all 
		$f \in \Lip_c^{\cK}(\G)$. 
		As $\phi \in C_c(\G)$, there is $t \geq 0$ large enough so that $\supp(\phi) \subseteq B_\ell(t)$, and therefore in turn $\supp (m_\phi (f)) \subseteq B_\ell(t)$ for all 
		$f \in E^\sigma_{\cK, n}$.
		Since  
		$m_\phi$ is unital, 
		these observations together combine
		to guarantee 
		$m_{\phi}(f) \in D \cdot E^\sigma_{\cK, n}[t]$ for any $f \in E^\sigma_{\cK, n}$.
		The set 
		$E^\sigma_{\cK, n}[t]$
		is totally bounded by \cref{lemma:compact-subsets-yield-CQMS},  
		and therefore so is 
		$D \cdot E^\sigma_{\cK, n}[t]$.
		Thus the assumption $\sup_{f\in E_{\cK,n}} \Vert f - m_{\phi}(f) \Vert < \varepsilon$ tells us that 
		$E^\sigma_{\cK, n}$ 
		is $\varepsilon$ away in norm from a norm-totally bounded set, and is therefore itself totally bounded in norm. It follows that (2) holds.

		Conversely, suppose (2) holds and that we have a sequence  $(\phi_j)_{j \in \N}$ as in the statement of the theorem.
		Let $\varepsilon > 0$ be arbitrary, and let $D \geq 1$ be such that  
		$\sup_j \Vert m_{\phi_j} \Vert \leq D$.
		By \cref{prop:Ozawa-Rieffel-characterization}, we may fix 
		$f_1, \ldots f_l \in E^\sigma_{\cK, n}$
		such that for all 
		$f \in E^\sigma_{\cK, n}$ 
		there exists $k \leq l$ such that $\Vert f - f_k \Vert < \frac{\varepsilon}{4D}$. Since all the $f_k$ are compactly supported, there exists $M \in \N$ such that for all $j \geq M$ and for all $k =1, \ldots ,l$ we have
		\begin{align*}
			\Vert f_k - m_{\phi_j}(f_k) \Vert < \frac{\varepsilon}{4}. 
		\end{align*}
		Since $ \Vert m_{\phi_j} \Vert \leq D$ for all $j$, we also have
		\begin{align*}
			\Vert m_{\phi_j} (f_k) - m_{\phi_j}(f) \Vert \leq \Vert m_{\phi_j} \Vert \cdot \Vert f- f_k \Vert < \frac{\varepsilon}{4}.
		\end{align*}
		Now, given 
		$f \in E^\sigma_{\cK, n}$ 
		we find $k\leq l$ such that $\Vert f - f_k \Vert < \frac{\varepsilon}{4D}$. Then, for all $m \geq M$ we have
		\begin{align*}
			\Vert f - m_{\phi_j}(f) \Vert \leq \Vert f - f_k \Vert + \Vert f_k - m_{\phi_j}(f_k) \Vert + \Vert m_{\phi_j}(f_k) - m_{\phi_j}(f) \Vert < \frac{\varepsilon}{4D} + \frac{\varepsilon}{4} + \frac{\varepsilon}{4} < \varepsilon.
		\end{align*}
		We deduce 
		that
		\begin{align*}
			\sup_{f\in E^\sigma_{\cK, n}} \Vert f - m_{\phi}(f) \Vert_{\Cred(\G)} < \varepsilon.
		\end{align*}
		Now note that for any $ f \in E_{\cK, n}$ we have  $f - \sigma(f)\cdot 1_{\Go} \in E^\sigma_{\cK , n}$.
		By unitality of $m_{\phi_j}$ we may write 
		\begin{align*}
			\Vert f - m_\phi(f) \Vert_{\Cred(\G)} = \Vert f - \sigma(f) - m_\phi(f- \sigma(f)) \Vert_{\Cred(\G)} ,
		\end{align*}
		from which we see that (2)  follows. 
	\end{proof}

	\begin{remark}\label{remark:amenability-groupoid}
		By \cite[Proposition 2.2.13 and Proposition 2.2.7]{DelarocheRenaultAmenable2000} a second-counable \'etale groupoid $\G$ is amenable if and only if it admits a sequence $(h_j)_j$ of continuous positive definite functions with compact support such that
		\begin{enumerate}
			\item[(a)] $(h_j)_{\vert_{\Go} } = 1$, and
			\item[(b)] $\lim_j h_j = 1$ uniformly on every compact subset of $\G$. 
		\end{enumerate}
		Then the associated multipliers $m_{h_{j}}$ are unital completely positive maps by \cref{prop:pos-def-iff-cp-map}. Thus if $(h_j)_j$ are $\cK$-continuous for 
		for all $j$, we see that (2) implies (1) in \cref{thm:new-characterization}, and we may in fact use $(h_j)_j$ to verify \eqref{eq:norm-approximation-yields-CQMS}. 
		Indeed, 
		this will be implicit when we verify that AF groupoids give rise to compact quantum metric spaces in \cref{thm:CQMS-from-AF-groupoids}.  
	\end{remark}

		The characterization of compact quantum metric spaces arising from groupoids in \cref{thm:new-characterization} is new even in the case of discrete groups. Recall that for a discrete group $\Gamma$ there is a unique choice of metric stratification $\cK$ by \cref{remark:comments-on-metric-stratification-def}, and we see that $\cK$-continuity of $\phi \in C_c(\Gamma)$ is automatic. 
		A countable discrete group $\Gamma$  is said to be \emph{weakly amenable} if it admits a sequence $(\phi_j)_j$ such that $\phi_j \to 1$ uniformly on compact subsets and
		$\sup_j \Vert m_{\phi_j} \Vert_{\rm cb} < \infty$. 
		We may assume $\phi_j(e) = 1$, that is $m_{\phi_j}$ is unital, for all $j$, and will do so below. 
		Given a proper length function $\ell \colon \Gamma \to [0,\infty)$, we therefore see that \cref{thm:new-characterization} provides a sufficient and necessary condition for $(\Gamma, \ell)$ to yield a compact quantum metric space as in  \cref{example:CQMS-from-groups} whenever $\Gamma$ is weakly amenable. Notably, groups with polynomial growth are weakly amenable as they are amenable, and word-hyperbolic groups are weakly amenable by \cite{OzawaHyperbolicWeakAmenable2008}. We summarize these observations in the following result. 
		\begin{corollary}\label{cor:FD-CQMS-approx-discrete-groups}
			Suppose $\Gamma$ is a weakly amenable countable discrete group and  $(\phi_j)_j$ is a sequence as above. Let $L_\ell$ be the seminorm from \eqref{eq:slip-norm-length-discrete-group-case}.  Then $(C_c (\Gamma), L_\ell)$ is a compact quantum metric space if and only if  for any $\varepsilon > 0$ there is $M \in \N$ such that for any $j \geq M$ we have
			\begin{align}\label{eq:norm-approximation-discrete-group-case}
				\sup_{f \in E} \Vert f - m_{\phi_j}(f) \Vert_{\Cred(\Gamma)} < \varepsilon,
			\end{align}
			where 
			$E = \{ f\in C_c (\Gamma) \mid  L_\ell(f) \leq 1\}$. 
			In particular, \eqref{eq:norm-approximation-discrete-group-case} holds for finitely generated groups of polynomial growth and word-hyperbolic groups with their associated sequences $(\phi_j)_j$.

		\end{corollary}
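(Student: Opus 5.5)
The corollary is the specialization of \cref{thm:new-characterization} to $\G = \Gamma$, with $n=1$. I will first set up the dictionary between the two statements. By \cref{remark:stratifications-degenerate-cases} the only metric stratification is $\cK = (\{g\})_{g\in\Gamma}$. Each stratum is a single point, so $L^{\cK}_{\rm Lip} \equiv 0$ and $\Lip_c^{\cK}(\Gamma) = C_c(\Gamma)$. Also $\E = \ell^2(\Gamma)$, so $L^{\cK,1} = L^1_\ell = L_\ell$, and $E_{\cK,1} = E$. Every $\phi \in C_c(\Gamma)$ is trivially $\cK$-continuous (coefficient $0$). Hence condition (1) of \cref{thm:new-characterization} reads: for every $\varepsilon>0$ there is a finitely supported $\phi$ with $\phi(e)=1$ and $\sup_{f\in E}\Vert f - m_\phi(f)\Vert<\varepsilon$.

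For the "if" direction, assume \eqref{eq:norm-approximation-discrete-group-case}. I want to take $\phi = \phi_j$ for $j \ge M$, but the weak amenability sequence need not be finitely supported. So I will go back to the first half of the proof of \cref{thm:new-characterization} rather than quote its statement. That proof uses three facts:
\begin{itemize}
\item $m_\phi$ is unital;
\item $L_\ell(m_\phi f) \le D\, L_\ell(f)$, obtained via \cref{lemma:multipliers-extend-to-Roe} from $\Vert T^\phi\Vert<\infty$;
\item $m_\phi$ maps into functions supported in a fixed ball $B_\ell(t)$.
\end{itemize}
The last fact fails for non-compactly supported $\phi_j$. My fix is to truncate: keep a fixed $j\ge M$ and replace $\phi_j$ by $\psi = \phi_j 1_{B_\ell(t)}$ for a suitable large $t$. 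Then $\psi$ is finitely supported and $\psi(e)=1$, and $\Vert m_\psi\Vert_{\rm cb}<\infty$ by \cref{prop:Cc-is-in-FS}. For groups, \cite[Corollary 4.7]{BedosConti2015} (cf.\ \cref{remark:existence-and-boundedness-of-Tphi}) gives $\Vert T^\psi\Vert\le \Vert m_\psi\Vert_{\rm cb}$.

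The remaining task, and the main obstacle, is to show $\sup_{f\in E}\Vert m_{\phi_j}(f)-m_\psi(f)\Vert$ can be made small, uniformly over $E$. My plan is to avoid the truncation altogether where possible. In the cases of interest the sequences are finitely supported: for polynomial growth one can use normalized Følner-set coefficients $|F_j|^{-1}1_{F_j}*1_{F_j}^*$, and for hyperbolic groups Ozawa's multipliers can be chosen finitely supported. So I will add, without loss of generality, the standing assumption that the $\phi_j$ lie in $C_c(\Gamma)$ (the weak amenability approximants can always be taken finitely supported). With that, \cref{thm:new-characterization}(1) holds directly with $\phi=\phi_M$, and (2) follows.

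For the "only if" direction, assume $(C_c(\Gamma),L_\ell)$ is a compact quantum metric space. I will rerun the converse part of the proof of \cref{thm:new-characterization}, which uses only three things:
\begin{itemize}
\item the finite $\frac{\varepsilon}{4D}$-net $f_1,\dots,f_l$ of $E^\sigma$ given by \cref{prop:Ozawa-Rieffel-characterization};
\item the convergence $\Vert f_k - m_{\phi_j}f_k\Vert\to0$, which holds since the $f_k$ are finitely supported and $\phi_j\to1$ pointwise;
\item the uniform bound $\sup_j\Vert m_{\phi_j}\Vert\le\sup_j\Vert m_{\phi_j}\Vert_{\rm cb}=D$.
\end{itemize}
This yields an $M$ with the desired bound for every $j\ge M$ on $E^\sigma$. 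Unitality, $m_{\phi_j}(1)=1$, then transfers the bound to $E$ via $f\mapsto f-\sigma(f)1$; here $\sigma$ is the canonical trace, whose kernel-free part keeps $L_\ell$ unchanged.

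Finally, the "in particular" clause follows from the equivalence just established together with the known results. Finitely generated groups of polynomial growth with word length give compact quantum metric spaces by \cite{ChristRieffel}, and are amenable, hence weakly amenable. Word-hyperbolic groups give compact quantum metric spaces by \cite{OzawaRieffel2004} and are weakly amenable by \cite{OzawaHyperbolicWeakAmenable2008}.
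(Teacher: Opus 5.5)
Your proposal is correct and is essentially the paper's argument. You specialize \cref{thm:new-characterization} to $\G=\Gamma$ with the singleton stratification (so $L^{\cK}_{\rm Lip}\equiv 0$, $\Lip_c^{\cK}(\Gamma)=C_c(\Gamma)$, and $\cK$-continuity is automatic), take $\phi=\phi_M$ for (1)$\Rightarrow$(2), and rerun the converse proof, which indeed yields the bound for every $j\geq M$. The only correction is that finite support of the $\phi_j$ is not a ``without loss of generality'' step: the hypothesis concerns the given sequence, and $\phi_j\equiv 1$ would make it vacuous. Rather, it is part of the standard definition of weak amenability, which the paper also tacitly uses, so you can drop the truncation detour.
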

		
		It is natural to compare \cref{cor:FD-CQMS-approx-discrete-groups} to to Kaad's characterization of compact quantum metric spaces from \cite[Theorem 3.1]{KaadExternal24}. In it, he characterizes compact quantum metric spaces as pairs $(\cX, L)$ consisting of  operator systems $\cX$ and  slip-norms $L \colon \cX \to [0,\infty)$ which admit positive finite-dimensional approximations in the following sense: For any $\varepsilon >0$, there exist an  operator system $\cY$, a unital isometry $\iota \colon \cX \to \cY$ and a unital positive map $\Phi \colon \cX \to \cY$ with finite-dimensional range, such that 
				\begin{align*}
						\Vert \iota(x) - \Phi(x) \Vert_{\cY} \leq \varepsilon \cdot L(x)
				\end{align*}
		for all $x \in \cX$. By comparison, \cref{cor:FD-CQMS-approx-discrete-groups} tells us that if the countable discrete group $\Gamma$ is weakly amenable with associated sequence $(\phi_j)_j$, then we obtain finite-dimensional approximations with $\cY= \cX = C_c(\Gamma)$, $\iota$ the identity map on $ C_c(\Gamma)$, and $\Phi = m_{\phi_j}$ for $j$ sufficiently large. Note however that to have the codomain $\cY$ equal $\cX = C_c(\Gamma)$, we have sacrificed positivity of $\Phi$ in exhange for complete boundedness. If $(\Gamma, \ell)$ has polynomial growth, then $\Gamma$ is amenable and the sequence $(\phi_j)_j$ can be chosen so that $\phi_j$ is positive definite and hence $m_{\phi_j}$ is completely positive for all $j$.

	Lastly in this section we prove a groupoid analogue of the result from \cite{ChristensenIvanRD}.  Specifically, we show that under the assumption of rapid decay, we may always find $n$ large enough so that when employing the seminorm from \eqref{eq:def-total-seminorm} using $n$ iterated commutators we obtain a compact quantum metric space. Note however that in the result below we still have to impose some compatibility between the metric stratification $\cK$ and the length function in order to guarantee that the Fourier multiplier is $\cK$-continuous. 
	We will also  impose the very weak condition on the length function that for any $t \in \ell(\G) \subseteq \R$ there is some $\delta_t>0$ such that $[t, t+\delta_t) \cap \ell(\G) = \{t\}$. This condition is of course satisfied in the case of integer-valued length functions. Note also that for discrete groups, this condition is implied by properness of the length function.

	\begin{proposition}\label{prop:Christensen-Ivan-for-groupoids}
		Let $\G$ be an \'etale groupoid with compact unit space $\Go$, and let $\ell \colon \G \to [0,\infty)$ be a proper continuous length function.
		Suppose further that for any $t \in \ell(\G) \subseteq \R$ there is some $\delta_t>0$ with $[t, t+\delta_t) \cap \ell(\G) = \{t\}$. 
		Furthermore, assume $d$ is a metric on $\Go$ inducing the topology. 
		Fix a metric stratification $\cK = (K_i)_{i\in I}$ with respect to $d$, and assume moreover that for each $i \in I$ there is $r_i \in \R$ such that $K_i \subseteq \ell^{-1}(\{r_i\})$.
		
		If $(G, \ell)$ has rapid decay with constants $C,s >0$, then  
		for any $n \geq \lfloor s \rfloor +1$ the pair 
		$( \Lip_c^{\cK}(\G), L^{\cK, n})$
		is a compact quantum metric space. 
		
		In particular, if $(G,\ell)$ has polynomial growth  
		bounded by some polynomial $p$ of degree $d$, then $( \Lip_c^{\cK}(\G), L^{\cK, n})$ is a compact quantum metric space for all $n > \lfloor d + \frac{1}{2}\rfloor +1$. 
	\end{proposition}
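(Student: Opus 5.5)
We verify condition (1) of \cref{thm:new-characterization}, using truncation multipliers $\phi_t := 1_{B_\ell(t)}$ for $t \in \ell(\G)$. The first task is to check that $\phi_t \in C_c(\G)$. The set $B_\ell(t)$ is compact by properness. For openness, the gap hypothesis gives $B_\ell(t) = \ell^{-1}([0, t+\delta_t))$, and this set is open by continuity of $\ell$. So $\phi_t$ is continuous with compact support, and by \cref{prop:Cc-is-in-FS} it defines a multiplier $m_{\phi_t}$. This multiplier is unital because $\Go \subseteq B_\ell(t)$.

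Next we check $\cK$-continuity. Each $K_i$ lies in a single level set $\ell^{-1}(\{r_i\})$, so $\phi_t$ is constant on $K_i$: it equals $1$ if $r_i \le t$ and $0$ otherwise. Hence $m_{\phi_t}(f)|_{K_i}$ is either $f|_{K_i}$ or $0$. This gives $L^{K_i}_{\rm Lip}(m_{\phi_t} f) \le L^{K_i}_{\rm Lip}(f)$ for every $i$, so $\phi_t$ is $\cK$-continuous with coefficient $1$.

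The main estimate is the analytic step. For $f \in E_{\cK,n}$ we want to show $\|f - m_{\phi_t}(f)\| \to 0$ uniformly as $t\to\infty$. Put $g = f - m_{\phi_t} f$, which is supported on $\{\ell > t\}$. Rapid decay gives
\[
\|g\| \le C\|g\|_{2,s}.
\]
On the support of $g$ we have $\ell > t \ge 1$ once $t \ge 1$, so $(1+\ell)^{s} \le 2^s \ell^{s} \le 2^s \ell^n t^{s-n}$, using $n > s$. Therefore
\[
\|g\|_{2,s} \le 2^s t^{s-n} \max\Big\{\sup_u \|f\,\ell^n\|_{\ell^2(\G_u)},\ \sup_u \|f^*\,\ell^n\|_{\ell^2(\G_u)}\Big\}.
\]
Both weighted norms are controlled by $L^n_\ell$. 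As in the proof of \cref{lemma:compact-subsets-yield-CQMS}, the formula in \cref{prop:iterated-commutator-adjointable} with $\xi = 1_{\Go}$ gives $\delta^n(f)1_{\Go} = f\,\ell^n$. Hence $\sup_u\|f\ell^n\|_{\ell^2(\G_u)} \le \|\delta^n(f)\| \le 1$. By the identity $\delta^n(f)^* = \pm\delta^n(f^*)$, the same bound holds for $f^*$. This yields
\[
\sup_{f\in E_{\cK,n}} \|f - m_{\phi_t} f\| \le C 2^s t^{s-n} \to 0,
\]
and since $n \ge \lfloor s\rfloor + 1 > s$, the exponent $s-n$ is negative.

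To choose admissible $t$, the set $\ell(\G)$ must be unbounded in the relevant direction. If $\ell$ is bounded, then $\G$ is compact and \cref{lemma:compact-subsets-yield-CQMS} already gives the conclusion. Otherwise, for $t \in \ell(\G)$ large, condition (1) holds and \cref{thm:new-characterization} finishes the proof. The polynomial growth statement then follows from \cref{prop:degree-of-RD-from-PG} by choosing $s$ slightly larger than $d+\tfrac12$.

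The step I expect to need the most care is the identification $\delta^n(f)1_{\Go} = f\ell^n$, together with its source and range counterpart for $f^*$; the rest is routine.
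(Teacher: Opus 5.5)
Your proposal is correct and follows essentially the paper's own argument. Like the paper, you truncate with $1_{B_\ell(t)}$ (clopen by the gap hypothesis), get $\cK$-continuity because $\ell$ is constant on each $K_i$, and feed the Christensen--Ivan estimate $(1+\ell)^{s}\le 2^s\ell^n t^{s-n}$ together with $\delta^n(f)1_{\Go}=f\ell^n$ into \cref{thm:new-characterization}. Your restriction to $t\in\ell(\G)$, with the bounded case handled by \cref{lemma:compact-subsets-yield-CQMS}, is in fact slightly more careful than the paper, which applies the gap hypothesis to an arbitrary $t>0$.
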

	\begin{remark}
		Note that in the statement of the proposition we are not requiring that for $i\neq j\in I$ we have $r_i \neq r_j$. So we could have several $K_i$ be subsets of the same $\ell^{-1}(\{r\})$, $r \in \R$. This is for example very natural when considering a transformation groupoid $\Gamma \ltimes X$, where $K_i = \{\gamma_i\} \times X$ and $K_j = \{\gamma_j\} \times X$ would be subsets of the same $\ell^{-1}(\{r\})$ if $\ell_\Gamma(\gamma_i) = \ell_\Gamma(\gamma_j)$. 
	\end{remark}
	\begin{proof}

		By  \cref{thm:new-characterization} it suffices to show that under the given assumptions, for any $\varepsilon > 0$ there is $\phi \in C_c(\G)$ with 
		$\phi_{\vert_{\Go}} = 1_{\Go}$
		such that $\phi$ is $\cK$-continuous and 
		\begin{align}\label{eq:BLkn-epsilon-approximation}
			\sup_{f\in E_{\cK, n}} \Vert f - m_{\phi}(f) \Vert_{\Cred(\G)} < \varepsilon.
		\end{align}
		Let therefore $\varepsilon > 0$ be given. Like in the proof of \cite[Theorem 2.6]{ChristensenIvanRD}, observe that if $1\leq t \leq \ell(\gamma)$, then for any $n > s$, in particular for $n \geq \lfloor s \rfloor +1$, we have
		\begin{align*}
			(1 + \ell(\gamma))^{2s} \leq 2^{2s} \ell(\gamma)^{2s} = 2^{2s} \ell(\gamma)^{2n} \ell(\gamma)^{2(s-n)} \leq 2^{2s} \ell(\gamma)^{2n} t^{2(s-n)}.
		\end{align*}
		Since $2s- 2n < 0$, we may choose a $t > 0$ such that $2^{2s} t^{2s -2n} < \frac{\varepsilon^2}{C^2}$. Fix such a $t$. 
		
		By assumption on the image of $\ell$ there is $\delta_t$ such that  $(t , t+\delta_t) \cap \ell(\G) = \emptyset$. 
		Thus $B_\ell(t)$ is a clopen subset of $\G$ by continuity of $\ell$. It follows that $\phi = 1_{B_\ell(t)} \in C_c(\G)$.  
		By \cref{prop:Cc-is-in-FS}, we have that $\phi \in \FS(\G)$, and so we get a completely bounded multiplier $m_\phi \colon \Cred(\G) \to \Cred(\G)$. 
		Then, by definition of $\phi$ and assumptions on $K_i$, $i \in I$, we either have $\phi_{\vert_{K_i}} = 1$ or $\phi_{\vert_{K_i}} = 0$ for every $i \in I$. As such, $m_\phi$ is $\cK$-continuous. Note in particular that $\phi_{\vert_{\Go}} = 1$. Thus it suffices to verify that \eqref{eq:BLkn-epsilon-approximation} holds for this $\phi$. 
		
		To this end fix an arbitrary $f\in E_{\cK, n}$. Using the rapid decay condition, we calculate  
		\begin{align*}
			&\Vert f - m_{\phi}(f) \Vert^2 \leq C^2 \Vert f - m_{\phi}(f) \Vert_{2,s}^2 \\
			&= C^2 \max \{  \sup_{u \in \Go} \sum_{\gamma \in \G_u} \vert f(\gamma) - \phi(\gamma)f(\gamma)\vert^2 (1+ \ell(\gamma))^{2s} ,  \sup_{u \in \Go} \sum_{\gamma \in \G_u} \vert f(\gamma^{-1}) - \phi(\gamma^{-1})  f(\gamma^{-1})\vert^2 (1+ \ell(\gamma))^{2s} \}  
		\end{align*}
		To continue the calculation, we fix $u \in \Go$ and show that independently of $u$ we have 
		\begin{align*}
			\sum_{\gamma \in \G_u} \vert f(\gamma) - \phi(\gamma)f(\gamma)\vert^2 (1+ \ell(\gamma))^{2s} < \varepsilon^2/C^2.
		\end{align*}
		The analogous estimate may be done for $\sum_{\gamma \in \G_u} \vert f(\gamma^{-1}) - \phi(\gamma^{-1})  f(\gamma^{-1})\vert^2 (1+ \ell(\gamma))^{2s}$. 
		
		Since $f \in E_{\cK, n}$ we in particular have
		\begin{align*}
			\sum_{\gamma \in \G_u} \vert f(\gamma)\vert^2 \ell(\gamma)^{2n} \leq \Vert \delta^n (f)\Vert_{\curlyL_{C(\Go)}(\E)}^2 \leq L^{\cK, n}(f)^2 \leq 1.
		\end{align*}
		By the construction of $\phi$ and since $n > s$, we may calculate 
		\begin{align*}
			\sum_{\gamma \in \G_u} \vert f(\gamma) &- \phi(\gamma)f(\gamma)\vert^2 (1+ \ell(\gamma))^{2s}\\ 
			&= \sum_{\gamma \in \G_u \cap B_\ell(t)} \vert f(\gamma) - \phi(\gamma)f(\gamma)\vert^2 (1+ \ell(\gamma))^{2s} + \sum_{\gamma \in \G_u\setminus B_\ell(t)} \vert f(\gamma) - \phi(\gamma)f(\gamma)\vert^2 (1+ \ell(\gamma))^{2s} \\
			&= 0 + \sum_{\gamma \in \G_u\setminus B_\ell(t)} \vert f(\gamma)\vert^2 (1 + \ell(\gamma))^{2s} \\
			&\leq 2^{2s}t^{2s-2n} \cdot \sum_{\gamma \in \G_u\setminus B_\ell(t)} \vert f(\gamma)\vert^2 \ell(\gamma)^{2n}  \\ 
			&<   \frac{\varepsilon^2}{C^2} \cdot \Vert \delta^n(f) \Vert_{\curlyL_{C(\Go)}(\E)}^2 \leq \frac{\varepsilon^2}{C^2}. 
		\end{align*}
		Since all estimates are independent of $u$, this finishes the proof 
		of the first part. 
		The second statement now follows by the first by noting that if $(\G,\ell)$ has polynomial growth bounded by a polynomial $p$ of degree $d$, then $(\G,\ell)$ has rapid decay using any $s > d +\frac{1}{2}$ by \cref{prop:degree-of-RD-from-PG}.
	\end{proof}

	\section{AF groupoids}\label{sec:AF-groupoids}	
	We begin this section by reminding the reader about AF groupoids, in particular those with compact unit spaces. 
	Suppose $\G$ is a second-countable ample \'etale groupoid, where $\Go$ is a 
	totally disconnected compact Hausdorff space. 
	We then say that $\G$ is an \emph{AF-groupoid} if there exists an increasing sequence $\G_1 \subseteq \G_2 \subseteq \ldots \subseteq \G$ consisting of clopen subgroupoids for which
	\begin{itemize}
		\item $\G_n$ is principal for every $n\in \N$;
		\item $\G_n^{(0)}  = \Go$ for every $n \in \N$;
		\item $\G_n$ 
		is compact for every $n \in \N$;
		\item $\G = \cup_{n=1}^\infty \G_n$.
	\end{itemize}
	Note that such a $\G$ must be principal. Moreover, we note that AF groupoids are in general not compactly generated. As shown in \parencite[Theorem 3.9]{GiordanoPutnamSkau2004}, any AF groupoid may be realized as the groupoid arising from a Bratteli diagram,
	and this is the manner in which we will realize them. Indeed, as the length function we will be concerned with arises from a Bratteli diagram and an in-depth understanding of the construction is necessary for the proof of \cref{thm:CQMS-from-AF-groupoids}, we provide the details on how an AF groupoid arises from a Bratteli diagram.
	
	A \emph{Bratteli diagram} $B = (V,E)$ is a directed graph whose vertex set $V$ and edge set $E$ may be written as countable disjoint unions of non-empty finite sets, that is,
	\begin{align*}
		V =  \coprod_{n=0}^\infty V_n \quad \text{ and } \quad E =  \coprod_{n = 1}^\infty E_n ,
	\end{align*}
	along with maps $i \colon E_n \to V_{n-1}$ and $t \colon E_n \to V_n$ 
	subject to the additional relations $i(E_n) = V_{n-1}$ and $t(E_n) \subseteq V_n$ for $n\geq 1$. The map $i$ is known as the \emph{source map}, and $t$ is known as the \emph{range map}. We will refer to the vertices in $V_n$ as being in \emph{level $n$} for simplicity. Denote by $S(B)$ the set of all sources in $B$, that is, the set of vertices $v \in V$ for which there are no edges with $v$ as target, that is, there is no $e \in E$ with $t(e) = v$. 
	While a Bratteli diagram could have infinitely many sources, 
	we will only be interested in groupoids with compact unit spaces, which will correspond to the Bratteli diagram only having finitely many sources. 
	
	To associate a groupoid to a Bratteli diagram $B = (V,E)$, we first construct its infinite path space. Given a source $v \in S(B) \cap V_n$, the set of infinite paths starting at $v$ is the set
	\begin{align*}
		X_v = \{ e_{n+1} e_{n+2} \ldots \mid e_i \in E_i, i(e_{n+1}) = v, \text{ and } i(e_{n+k+1}) = t(e_{n+k}) \text{ for all $k\geq 1$} \}.
	\end{align*}
	As a piece of suggestive notation, for $x \in X_v$ we will write $x = x_{n+1} x_{n+2} \ldots = x_{[n+1, \infty)}$ where $x_i \in E_i$ for all $i \geq n+1$. The infinite path space associated to $B$ is then, as a set, given by
	\begin{align*}
		X_B = \coprod_{v \in S(B)} X_v
	\end{align*}
	We endow $X_B$ with the topology which has a basis of compact open cylinder sets defined by the finite paths. Specifically, given a finite path $\mu$ with with $i(\mu) \in S(B) \cap V_n$ and length denoted by $\vert \mu \vert$, the cylinder set associated with $\mu$ is
	\begin{align*}
		Z(\mu) = \{ e_{n+1} e_{n+2} \ldots \in X_{i(\mu)} \mid e_{n+1} \ldots e_{n+ \vert \mu \vert} = \mu  \}.
	\end{align*}
	From the infinite path space, we may now construct a groupoid. 
	For every $N \geq 1$ we define
	\begin{align}\label{eq:def-inf-path-space}
		P_N = \{ (x,y) \in X_B^2 \mid i(x) \in V_m \cap S(B), i(y) \in V_n \cap S(B), m,n\leq N, x_k = y_k \forall k> N   \},
	\end{align}
	which we may view as the set of pairs of paths which eventually agree. Equipped with the relative topology, $P_N$ is a compact principal ample Hausdorff groupoid with $((x,y),(z,w)) \in P_N^{(2)}$ if and only if $y=z$, and so
	\begin{align}\label{eq:PN-gpd-operations}
		(x,y)(y,z) = (x,z) \quad \text{and} \quad (x,y)^{-1} = (y,x).
	\end{align}
	The unit space may be identified with
	\begin{align*}
		\coprod_{n=0}^N \coprod_{v \in S(B) \cap V_n} Z(v).
	\end{align*}
	We then define the groupoid $\G = \G_B$ associated with the Bratteli diagram $B=(V,E)$ as the increasing union
	\begin{align*}
		\G := \bigcup_{N\geq 1} P_N
	\end{align*}
	equipped with the inductive limit topology. The groupoid multiplication and inversion are given by the natural extensions of \eqref{eq:PN-gpd-operations}. Of importance to us is that the topology has a basis defined by pairs of finite paths $\mu$ and $\lambda$ with $i(\mu) \in S(B) \cap V_m$, $i(\lambda) \in S(B) \cap V_n$ for $m,n \in \N \cup \{0\}$, and with $t(\mu) = t(\lambda)$. The cylinder set is given by
	\begin{align}\label{eq:def-cylinder-set}
		Z(\mu, \lambda) = \{ (x,y) \in Z(\mu) \times Z(\lambda) \mid x_{[m + \vert \mu \vert + 1, \infty)} = y_{[n + \vert \lambda \vert + 1 , \infty)} \}.
	\end{align}
	The unit space of $\G$ may be identified with $X_B$, and it is compact if and only if $B$ has finitely many sources. 
	Setting 
	\begin{align}\label{def:cpct-subgroupoids-Gn}
		\G_n := P_n \cup \Go ,
	\end{align}
	the groupoid $\G$  is  realized as an increasing union of principal  clopen compact subgroupoids as follows
	\begin{align*}
		\G := \bigcup_{n=1}^\infty \G_n .
	\end{align*} 

We proceed to introduce a length function on $\G$ constructed from the Bratteli diagram $B = (V,E)$, which in turn will be used in the construction of compact quantum spaces below. For this we introduce the following notation. 
For $v, w \in V$ we will by $P(v,w)$ denote the set
\begin{align*}
	P(v,w) := \{ \mu  \mid \mu \text{ is a finite path with } i(\mu) = v \text{ and } t(\mu) = w   \}.
\end{align*}
If $W \subset V$ is a subset of the vertices, define 
\begin{align*}
	P(v,W) := \{ \mu  \mid \mu \text{ is a finite path with } i(\mu) = v \text{ and } t(\mu) \in W   \}.
\end{align*}
We may now define the  length function we will consider in this section. 

	\begin{definition}\label{def:AF-length-function} 
	Let $\G$ be an AF groupoid arising from a Bratteli diagram $B = (V,E)$ with $\vert S(B)\vert < \infty$. 
	For $(x,y) \in \G$, denote by 
	\begin{align*}
		k(x,y) = \min \{m \mid x_{[m+1,\infty)} = y_{[m+1,\infty)}\}.
	\end{align*}
	 Moreover, we define the function
	\begin{align}\label{eq:def-AF-length-function}
		\ell(x,y) = \begin{cases}
			0 & \text{ if  $x=y$} \\ 
			\sum_{v \in S(B)} \vert P(v, V_{k(x,y)}) \vert & \text{ if $x \neq y$}.
		\end{cases}
	\end{align}
\end{definition}

	We record the following basic observation about $k$ and $\ell$.

	\begin{lemma}\label{lemma:relating-ell-and-k}
		Let $\G$ be an AF groupoid arising from a Bratteli diagram $B = (V,E)$ with $\vert S(B)\vert < \infty$. 
		Let $k$ and $\ell$ be as in \cref{def:AF-length-function}. 
		Suppose $(x,y), (z,w) \in \G$ with $x\neq y$ and $z\neq w$. If $\ell(x,y) > \ell(z,w)$, then $k(x,y) > k(z,w)$. 
	\end{lemma}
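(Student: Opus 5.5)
The plan is to prove the contrapositive: if $k(x,y) \leq k(z,w)$, then $\ell(x,y) \leq \ell(z,w)$. Since $x \neq y$ and $z \neq w$, both lengths are given by the second case of \eqref{eq:def-AF-length-function}. Writing $a_m := \sum_{v \in S(B)} \vert P(v, V_m)\vert$, we have $\ell(x,y) = a_{k(x,y)}$ and $\ell(z,w) = a_{k(z,w)}$. The whole statement therefore reduces to showing that $m \mapsto a_m$ is non-decreasing in $m$.

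To prove monotonicity, fix a source $v \in S(B)$ and compare $P(v, V_m)$ with $P(v, V_{m+1})$. Every path $\mu \in P(v,V_m)$ ends at some vertex $t(\mu) \in V_m$. Because $i(E_{m+1}) = V_m$, there is an edge $e \in E_{m+1}$ with $i(e) = t(\mu)$, so $\mu e \in P(v, V_{m+1})$. Choose one such edge $e_w$ for each $w \in V_m$. The map $\mu \mapsto \mu e_{t(\mu)}$ is then injective, since $\mu$ is recovered by deleting the last edge. This gives $\vert P(v,V_m)\vert \leq \vert P(v,V_{m+1})\vert$ for each $v$, and summing over the finitely many sources yields $a_m \leq a_{m+1}$.

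Two conventions need checking before this goes through. A path from a source $v \in V_n$ to $V_m$ with $m < n$ does not exist, so those terms are $0$. At $m = n$ we count the trivial path, giving $\vert P(v,V_n)\vert = 1$. In every case the injection above still applies, and the finiteness of each $P(v,V_m)$ follows from the levels $E_j$ being finite.

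With monotonicity in hand, $k(x,y) \leq k(z,w)$ gives $a_{k(x,y)} \leq a_{k(z,w)}$, which is the contrapositive. I do not expect a real obstacle here. The only subtle point is the extension step, which is exactly what the condition $i(E_{m+1}) = V_m$ (no sinks) provides, together with the handling of trivial and empty path sets described above.
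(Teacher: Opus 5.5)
Your proposal is correct and follows the same route as the paper. Both arguments reduce the claim to the monotonicity of $m \mapsto \sum_{v \in S(B)} \vert P(v, V_m)\vert$, so that $\ell(x,y) > \ell(z,w)$ forces $k(x,y) > k(z,w)$. The paper takes that monotonicity as evident, while you justify it explicitly with the path-extension injection furnished by $i(E_{m+1}) = V_m$.
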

	
	\begin{proof}
		Note that $x\neq y$ and $z\neq w$ implies that $(x,y), (z,w) \not\in \Go$, and hence
		\begin{align*}
			 \sum_{v \in S(B)} \vert P(v,V_{k(x,y)}) \vert = \ell(x,y) > \ell(w,z) =  \sum_{v \in S(B)} \vert P(v,V_{k(w,z)}) \vert,
		\end{align*} 
		from which we see that $k(x,y) > k(w,z)$. 
	\end{proof}

	To employ the machinery from \cref{sec:CQMS-from-groupoids}, it will also be important that the function $\ell$ is a continuous and proper length function, which we proceed to verify in the next two results. 
	
	\begin{proposition}\label{prop:ellB-is-cont-length-function}
		Let $\G$ be an AF groupoid arising from a Bratteli diagram $B = (V,E)$ with $\vert S(B)\vert < \infty$.  
		The function $\ell$ from \eqref{eq:def-AF-length-function} 
		is a continuous length function on 
		$\G$.  
	\end{proposition}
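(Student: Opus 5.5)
The plan is to verify the three axioms of \cref{def:length-function} directly, and then continuity, all by reducing to statements about the integer $k(x,y)$. The key auxiliary quantity is the sequence
\begin{align*}
	a_m := \sum_{v \in S(B)} \vert P(v, V_m) \vert , \qquad m \geq 0.
\end{align*}
I would first check that $a_m$ is nondecreasing in $m$, using $i(E_{m+1}) = V_m$. Every vertex of $V_m$ emits an edge, so each path from a source into $V_m$ extends to a path into $V_{m+1}$. Distinct paths stay distinct after extension, which gives $a_{m+1} \geq a_m$. I would also note $a_m \geq 1$ once there is a source at level $\leq m$. With this, $\ell(x,y) = a_{k(x,y)}$ for $x \neq y$ and $\ell = 0$ on the diagonal.

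\textbf{Axiom (1).} If $x \neq y$, then $k(x,y) \geq 1$ and some source sits at level $<k$, so $a_{k} \geq 1 > 0$. Hence $\ell(x,y)=0$ iff $x=y$, i.e.\ iff $(x,y) \in \Go$.

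\textbf{Axiom (2).} Symmetry is immediate, since $k(x,y)=k(y,x)$.

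\textbf{Axiom (3).} For composable $(x,y)(y,z) = (x,z)$, the case $x = z$ is trivial. Otherwise $x_{[m+1,\infty)} = z_{[m+1,\infty)}$ holds for $m = \max\{k(x,y), k(y,z)\}$, so $k(x,z) \leq \max\{k(x,y),k(y,z)\}$. Monotonicity of $a_m$ then gives
\begin{align*}
	\ell(x,z) \leq \max\{\ell(x,y), \ell(y,z)\} \leq \ell(x,y) + \ell(y,z).
\end{align*}
This holds provided $x \neq y$ and $y \neq z$. If one of these is a unit the inequality is trivial.

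\textbf{Continuity.} This is the step needing care with the inductive limit topology, and I expect it to be the main obstacle. I would show $\ell$ is locally constant on the basic cylinder sets $Z(\mu,\lambda)$ of \eqref{eq:def-cylinder-set}. Fix $(x,y)$ with $x\ne y$ and $k = k(x,y)$. Choose $\mu, \lambda$ as the initial segments of $x, y$ up to level $k$, which share the endpoint $t(\mu)=t(\lambda)=t(x_k)$. Every $(x',y') \in Z(\mu,\lambda)$ agrees with $(x,y)$ in coordinates $\leq k$ and has $x'_{[k+1,\infty)} = y'_{[k+1,\infty)}$. Since $x_k \neq y_k$ (by minimality of $k$, or because the sources differ), we get $k(x',y') = k$ exactly. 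So $\ell$ is constant on this open neighbourhood.

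For $(x,x) \in \Go$ the neighbourhood is $Z(\mu,\mu)$ with $\mu$ the level-$0$ (trivial) path at the source. This may contain non-units, so instead I would use that $\Go$ is clopen in $\G$. This holds because $\G$ is ample and principal; concretely, the diagonal in each $P_N$ is open, being a union of $Z(\mu,\mu)$ with $\vert\mu\vert = N$. Hence $\ell$ is $0$ on an open neighbourhood of each unit. Since $\ell$ is locally constant, it is continuous. The subtlety to double-check is the level-indexing convention for $k$ relative to where sources live, so that the cylinders indeed fix coordinates through level $k$.
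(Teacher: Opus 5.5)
Your proposal is correct and follows essentially the paper's route. The axioms reduce to the ultrametric bound $k(x,z)\leq\max\{k(x,y),k(y,z)\}$ together with monotonicity of $a_m$, which the paper uses only implicitly through \cref{lemma:relating-ell-and-k}. Continuity comes from $\ell$ being constant on the cylinder $Z(\mu,\lambda)$ cut at level $k(x,y)$, and you argue this more directly than the paper's sequential argument.

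One small slip: by \eqref{eq:def-cylinder-set} every $Z(\mu,\mu)$, including the trivial one $Z(v,v)$, consists only of units, so your detour through openness of $\Go$ is valid but not needed.
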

	
	\begin{proof}
		We first verify that $\ell$ is a length function. 
		First, note that $\ell(x,x) = 0$ by definition. Moreover, if $x\neq y$, then $k(x,y)\geq 1$ and so $P(v, V_{k(x,y)}) \neq \emptyset$ for at least one $v \in S(B)$. We conclude that $\ell(x,y) = 0$ if and only if $x = y$. 
		Since $(x,y)^{-1} = (y,x)$ in $\G$, we see that $\ell((x,y)^{-1}) =  \ell(x,y)$, since $k(x,y) = k(y,x)$. 
		Now, let $(x,y), (y,z), (x,z) \in \G$. We wish to show that
		\begin{align*}
			\ell(x,z) \leq \ell(x,y) + \ell(y,z).
		\end{align*}
		Suppose for the sake of contradiction that $\ell(x,z) > \max \{ \ell(x,y), \ell(y,z)\}$. By \cref{lemma:relating-ell-and-k} we then have $k(x,z) > \max \{ k(x,y), k(y,z) \}$. But then $x_{[l_1, \infty)} = y_{[l_1, \infty)}$ for some $l_1 < k(x,z)$, and $y_{[l_2, \infty)} = z_{[l_2, \infty)}$ for some $l_2 < k(x,z)$.   We conclude that $x_{[l, \infty)} = z_{[l, \infty)}$ for $l < k(x,z)$ contradicting the minimality of $k(x,z)$. We deduce a stronger version of the triangle inequality, namely $\ell(x,z) \leq \max \{ \ell(x,y) , \ell(y,z) \}$.

		To see that $\ell$ is continuous, suppose $(x^{(j)}, y^{(j)}) \to (x,y)$ in $\G$. There are finite paths $\mu, \lambda$ with $t(\mu) = t(\lambda)$ such that $(x,y) \in Z(\mu, \lambda)$, cf. \eqref{eq:def-cylinder-set}. We then have $x_{[m +\vert \mu  \vert + 1 , \infty)} = y_{[n + \vert \lambda \vert + 1 , \infty)}$, and suppose that $m + \vert \mu \vert$ is minimal with this property, that is, $x$ and $y$ do not start agreeing at an earlier level. Since $Z(\mu, \lambda)$ is open, there is $N$ such that for all $j \geq N$, $(x^{(j)}, y^{(j)}) \in Z(\mu, \lambda)$. We claim that $N$ can also be chosen so that $x^{(j)}$ and $y^{(j)}$ do not agree at a level earlier than $m + \vert \mu \vert = n + \vert \lambda \vert$. Indeed, if no such $N$ exists, that is, if for arbitrarily large $j$, there is $l_j < m + \vert \mu \vert +1$ such that $x^{(j)}_{[l_j, \infty)} =  y^{(j)}_{[l_j, \infty)}$, we would have
		\begin{align*}
			s(x^{(j)}, y^{(j)}) \not \to s(x,y) = y \quad \text{or} \quad r(x^{(j)}, y^{(j)}) \not \to r(x,y) = x,
		\end{align*}
		thus implying that $s$ or $r$ is not continuous, a contradiction. Thus $x^{(j)}$ and $y^{(j)}$ eventually agree from level $m + \vert \mu \vert $, and $\ell (x^{(j)}, y^{(j)}) = \ell (x,y)$. Thus $\ell$ is continuous. 
	\end{proof}

	\begin{proposition}\label{prop:ell-preimage-in-subgpd}
		Let $\G$ be an AF groupoid arising from a Bratteli diagram $B = (V,E)$ with $\vert S(B)\vert < \infty$. 
		Let $\ell$ be the length function from 
		\eqref{eq:def-AF-length-function}. 
		For any $M \geq 0$ there is $N(M)$ such that $B_\ell(M) \subseteq \G_{N(M)}$. In particular, $\ell$ is a proper length function. 
	\end{proposition}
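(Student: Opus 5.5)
The plan is to show that $\ell(x,y)\le M$ forces $k(x,y)$ to be bounded by some $N(M)$ depending only on $M$, and then to read off membership in $\G_{N(M)}$ from the definition of $P_N$.

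First, note how $\ell$ grows with $k$. Set $a_k:=\sum_{v\in S(B)}\vert P(v,V_k)\vert$. Every vertex at level $\ge 1$ that is not a source has an incoming edge. Also $i(E_{k+1})=V_k$, so every vertex in level $k$ has an outgoing edge. Hence every path from a source to level $k$ extends to at least one path to level $k+1$. Extensions of distinct paths are distinct, so $a_{k+1}\ge a_k$. There is one subtlety: a source first appearing at level $k+1$ contributes its trivial path, so $a_{k+1}\ge a_k+1$ in that case. In general we only get $a_k$ nondecreasing and $a_k\ge 1$ for $k\ge \min\{n : S(B)\cap V_n\ne\emptyset\}$. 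Nondecreasing is not enough by itself, since $a_k$ could stay constant forever (for instance a single chain of vertices). So the argument cannot rely on the growth of $a_k$ alone.

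Instead I will argue directly with a threshold level. Let $n_0$ be the largest level containing a source; this is finite because $\vert S(B)\vert<\infty$. Given $M$, consider two cases.

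In the first case, $a_k\to\infty$. Choose $N(M)\ge n_0$ with $a_{N(M)+1}>M$. Since $a_k$ is nondecreasing, $\ell(x,y)\le M$ with $x\ne y$ forces $k(x,y)\le N(M)$.

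In the second case, $a_k$ is eventually constant, say from level $k_0$ on, and we take $N(M)=\max\{k_0,n_0\}$. Here we need an extra claim: if $k(x,y)=k>N(M)$, then the paths $x$ and $y$ differ at edge $k$ yet reach distinct level-$k$ paths from sources. Both $x_{[1,k]}$ and $y_{[1,k]}$ would then be distinct elements of the paths counted in $a_k$ that agree afterwards. I expect constancy of $a_k$ to force each path to level $k-1$ to have a unique extension, which would contradict $x_k\neq y_k$ with $t(x_k)=t(y_k)$ (the two paths join at a common vertex). This constancy argument is the step I expect to be the main obstacle. The counting must account for paths from different sources and for the precise indexing in $k(x,y)$, and I would check it carefully against the definition of $k$. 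If the claim holds, then $\ell(x,y)\le M$ implies $k(x,y)\le N(M)$.

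Next, convert the bound on $k$ into membership in $\G_{N(M)}$. If $x=y$, then $(x,y)\in\Go\subseteq\G_{N(M)}$. Otherwise $x_j=y_j$ for all $j>k(x,y)$, and the sources of $x$ and $y$ lie at levels at most $n_0\le N(M)$. By \eqref{eq:def-inf-path-space}, this gives $(x,y)\in P_{N(M)}\subseteq\G_{N(M)}$.

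Properness then follows. $\ell$ is continuous by \cref{prop:ellB-is-cont-length-function}, so $B_\ell(M)$ is closed. It sits inside the compact set $\G_{N(M)}$, and hence $B_\ell(M)$ is compact.
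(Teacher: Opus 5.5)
Your first case is correct. It is a more careful version of the paper's unbounded case, where the paper just says that $\ell$ ``counts paths''. Your last two steps, membership in $P_{N(M)}$ and compactness of the closed set $B_\ell(M)$ inside $\G_{N(M)}$, also match the paper.

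The gap is in your second case, and the claim you flagged is false. Constancy $a_{k-1}=a_k$ does force every vertex of $V_{k-1}$ to have exactly one outgoing edge. But that does not contradict $x_k\neq y_k$ with $t(x_k)=t(y_k)$: the two edges can leave two \emph{different} vertices of $V_{k-1}$ and enter the same vertex of $V_k$.

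For example, take:
\begin{itemize}
\item $V_0=\{v\}$ and $V_1=\{u_1,u_2\}$, with one edge $v\to u_j$ for each $j$;
\item $V_k=\{w_k\}$ for $k\ge 2$, with one edge $u_j\to w_2$ for each $j$;
\item one edge $w_k\to w_{k+1}$ for each $k\ge 2$.
\end{itemize}
Then $\G$ is the pair groupoid on the two infinite paths $x$ (through $u_1$) and $y$ (through $u_2$). We have $a_k=2$ for all $k\ge 1$, so $k_0=1$, $n_0=0$, and your $N(M)=1$. However $k(x,y)=2$ and $\ell(x,y)=a_2=2$, while $x_2\neq y_2$ gives $(x,y)\notin P_1$. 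Hence $B_\ell(2)\not\subseteq\G_1$. If instead $r$ vertices at level $1$ merge one pair per level, then $a_k=r$ for all $k\ge 1$ while there are pairs with $k(x,y)=r$. So the bound cannot come from $k_0$ and $n_0$ alone.

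What is true, and what you need, is that such merges happen at only finitely many levels. Fix $k>\max\{k_0,n_0\}$:
\begin{itemize}
\item every vertex of $V_{k-1}$ has exactly one outgoing edge, so $\vert E_k\vert=\vert V_{k-1}\vert$;
\item no vertex of $V_k$ is a source, so each has an incoming edge, giving $\vert V_k\vert\le\vert E_k\vert$;
\item if some pair has $k(x,y)=k$, then $x_k\neq y_k$ are two edges of $E_k$ with a common target, so $\vert V_k\vert<\vert V_{k-1}\vert$.
\end{itemize}
Thus $(\vert V_k\vert)_{k>\max\{k_0,n_0\}}$ is a nonincreasing sequence of positive integers. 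Hence there is $N'\ge\max\{k_0,n_0\}$ with $\vert V_k\vert=\vert V_{k-1}\vert$ for all $k>N'$, and so $k(x,y)\le N'$ for all $x\neq y$. This gives $\G=\G_{N'}$, and $N(M)=N'$ works for every $M$.

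This is the kind of stabilization the paper asserts without proof in its bounded case, namely that the Bratteli diagram stabilizes. With this repair your argument is complete.
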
 
	\begin{proof}
		Note first that if $\ell(\G) \subseteq [0, R]$ for some  $R< \infty$, then the Bratteli diagram must stabilize, that is, for $N$ large enough the infinite path space $P_N$ from \eqref{eq:def-inf-path-space} is such that $P_{N+k} = P_N$ for all $k \geq 0$.  
		Then $\G$ is itself a compact groupoid, and $B_\ell(M) \subseteq \G$ is compact for any $M \geq 0$ as $\ell$ is continuous and  $B_\ell(M)$ is therefore a closed subset of a compact groupoid.  
		Thus $\ell$ is proper.
		
		Suppose therefore that $\ell$ is an unbounded function, so that  
		the Bratteli diagram does not stabilize.
		Let $M \geq 0$. Since $\ell$ counts the number of paths from the sources up to a level, there must be $N(M)$ such that $B_\ell(M) \subseteq \G_{N(M)}$. As before, we deduce that $B_\ell(M)$ is a closed subset of a compact groupoid, and therefore compact itself. The statement follows.

	\end{proof}
	
	\begin{remark}
		Note that different Bratteli diagrams could be associated with the same AF groupoid, therefore leading to different length functions. 
	\end{remark}

	\begin{remark}\label{remark:Gn-is-ball} 
		We will later want to relate the subgroupoids $\G_n$ to the balls $B_\ell(R)$, $R \geq 0$. 
		It could happen that the number of paths from the sources to $V_n$ is the same as the number of paths from the sources to $V_{n+1}$, while $\G_{n+1} \setminus \G_n \neq \emptyset$. This happens for example if there is only one edge from each vertex in $V_n$ to vertices in $V_{n+1}$, but one of the vertices in $V_{n+1}$ receives (at least) two edges. It is therefore not necessarily the case that every $\G_n$ is a ball for $\ell$.  However, for any $n$ there is $m \geq n$ such that $\G_{m}$ equals $B_\ell(R_m)$ for some $R_m \geq 0$.  If $\G$ is compact, this holds as $\ell$ is then bounded and $\G = \G_m$ for sufficiently large $m$.  $\G$ then equals the $\ell$-ball of any sufficiently large radius. If $\G$ is not compact, then $\ell$ must be unbounded since it is proper by \cref{prop:ell-preimage-in-subgpd}. As $\ell$ counts the number of paths up to a level, we see that such an $m$ and accompanying $R_m$ must exist. 
	\end{remark}
	
	For the sake of completeness, we verify that the metric groupoid $(\G, \ell)$ has at most linear growth, and therefore has rapid decay. 

	\begin{lemma}\label{lemma:ellB-gives-linear-growth}
		Let $\G$ be an AF groupoid  arising from a Bratteli diagram $B = (V,E)$ with $\vert S(B)\vert < \infty$.
		Then the groupoid $\G$ has at most linear growth with respect to the length function $\ell$ from \eqref{eq:def-AF-length-function}. Consequently, $(\G, \ell)$ has rapid decay.  
	\end{lemma}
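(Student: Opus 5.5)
We need to bound $\sup_{u\in\Go}|B_\ell(t)\cap\G_u|$ by a linear function of $t$, and then invoke \cref{prop:degree-of-RD-from-PG} with degree $1$ to conclude rapid decay.

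Fix $u=y\in\Go=X_B$ and $t\geq 0$, and consider the elements $(x,y)\in\G_y$ with $\ell(x,y)\leq t$. Besides the unit $(y,y)$, each such element has $x\neq y$. Put $k=k(x,y)\geq 1$; then $\ell(x,y)=N_k:=\sum_{v\in S(B)}|P(v,V_k)|\leq t$. The key observation is that $x$ is determined by its initial segment up to level $k$. The tail satisfies $x_{[k+1,\infty)}=y_{[k+1,\infty)}$, so $x$ is fixed by a finite path from some source $v\in S(B)$ to the vertex $i(y_{k+1})\in V_k$. Here the convention is that a source in $V_m$ with $m\leq k$ gives a path of length $k-m$, and a source in $V_k$ gives the trivial path; these are exactly the paths counted in $P(v,V_k)$.

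Now let $k^*$ be the largest level with $N_{k^*}\leq t$, or note that such levels are unbounded when $N$ stabilizes. We use that $N_k$ is nondecreasing in $k$: every vertex of $V_k$ emits an edge because $i(E_{k+1})=V_k$, so each path to $V_k$ extends to one to $V_{k+1}$. Every $x$ with $(x,y)\in B_\ell(t)$ and $x\neq y$ then agrees with $y$ from level $k^*+1$ onward, since $k(x,y)\leq k^*$. Hence $x$ is determined by a path from a source to the fixed vertex $i(y_{k^*+1})$, and there are at most $\sum_v|P(v,V_{k^*})|=N_{k^*}\leq t$ of these. This gives the uniform bound
\[
|B_\ell(t)\cap\G_y|\leq 1+t,
\]
which is linear in $t$.

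The main obstacle is the case where $N_k$ stays bounded by $t$ for all $k$, i.e. the diagram stabilizes and $k^*$ does not exist. In that case $x$ still agrees with $y$ beyond level $k(x,y)$. For each level $k$, the paths from sources into $i(y_{k+1})$ number at most $N_k\leq t$, and these sets are nested through extension along $y$. Since their cardinalities are bounded, the union stays at most $t$, and the bound $1+t$ survives.

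One should also check the sources lying at levels above $k$. Such an $x$ cannot agree with $y$ from level $k+1$ unless it starts at level at most $k$; otherwise $x\neq y$ would force $k(x,y)$ larger. So these do not contribute. With the linear bound established, \cref{prop:degree-of-RD-from-PG} yields rapid decay for every $s>3/2$.
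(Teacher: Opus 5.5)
Your proof is correct and is essentially the paper's argument: take the largest level $k$ with $\sum_{v\in S(B)}|P(v,V_k)|\le t$, note that every $x$ with $\ell(x,y)\le t$ agrees with $y$ beyond level $k$, and count the paths from sources into $y$'s vertex at level $k$. The differences are minor. The paper disposes of the compact (stabilizing) case by boundedness of $\ell$, whereas you use a direct nested-union count. Your bound $1+t$ also covers $t<1$, where the paper's stated bound $\le M$ fails because of the unit $(y,y)$.
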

	
	\begin{proof}
		If $\G$ is compact we are done since $\ell$ is continuous and will therefore  
		be bounded. The polynomial in \eqref{eq:poly-growth-criterion} may then be chosen to be a constant.  
		Hence we may assume that $\G$ is not compact and therefore that $\ell$ 
		is unbounded by \cref{prop:ell-preimage-in-subgpd}.
		Let $y \in \G^{(0)}$ be arbitrary. We wish to show that $ t\mapsto \vert \{ \gamma \in \G_y \mid \ell(\gamma)\leq t    \} \vert $ is linearly bounded independently of $y$. Let $k \geq 0$ be the largest integer for which 
		\begin{align*}
			\sum_{v \in S(B)} \vert P(v, V_k) \vert \leq M.
		\end{align*}
		If $(x,y) \in \G$ is such that $\ell(x,y) \leq M$, then $x_m = y_m$ for all $m > k$, but for $m\leq k$ we could have $y_m \neq x_m$. The number of possible such $x$ starting in vertex $v \in S(B)$ is then $\vert P(v, t(y_k))\vert$, from which we get
		\begin{align*}
			\vert \{ (x,y) \in \G_y \mid \ell(x,y)\leq M    \} \vert = \sum_{v \in S(B)} \vert P(v , t(y_k)) \vert \leq \sum_{v \in S(B)} \vert P(v , V_k) \vert \leq M
		\end{align*}
		from which it follows that $\G$ has linear growth with respect to $\ell$. It then follows that $(\G, \ell)$ has rapid decay by \cref{prop:degree-of-RD-from-PG}. 
	\end{proof}
	
	Recall that an AF groupoid $\G$ with compact unit space, as well as all the subgroupoids $\G_n$, $n \in \N$ are principal. Moreover, $\Go$ is a 
	totally disconnected compact Hausdorff space. 
	Pick any metric $d$ on $\Go$ inducing the topology. 
	By \cref{lemma:principal-metric-stratification} we then have a metric stratification of $\G$ with respect to $d$ given by
	\begin{align}\label{eq:metric-stratification-AF}
		\cK = (K_i)_{i \in \N \cup \{0\}} = (\ell^{-1}(\{i\}))_{i\in \N \cup \{0\}}.
	\end{align}
	Note that many of the $K_i$ will be empty as in general not every natural number will be in the range of $\ell$. 
	
	We record the following result, guaranteeing that natural sub-systems determined by the clopen compact subgroupoids $\G_n$, $n \in \N$, yield compact quantum metric spaces.

	\begin{proposition}\label{prop:CQMS-from-compact-AF-subgroupoid}
		Let $\G$ be an AF groupoid arising from a Bratteli diagram $B = (V,E)$ with $\vert S(B)\vert < \infty$. Furthermore, let $\ell$ be given by \eqref{eq:def-AF-length-function}, and let $d$ be a metric on $\Go$ inducing the topology. 
		We set $K_i = \ell^{-1}(\{i\})$, $i \in \N \cup \{0\}$ to be the a metric stratification of $\G$ as in \eqref{eq:metric-stratification-AF}.
		Denote the resulting seminorm using $n$ iterated commutators by $ L^{\cK, n}$ as in \cref{def:total-seminorm}. 
		For each $m \in \N$, let $\G_m \subseteq \G$ be the compact clopen subgroupoid given by \eqref{def:cpct-subgroupoids-Gn}. 
		Equip $\G_m$ with the restricted metric stratification given by $\cK_{m} := \cK \cap \G_m$. 
		
		Then $(\Lip_c^{\cK_m}(\G_m), (L^{\cK, n})_{\vert_{\Lip_c^{\cK_m}(\G_m)}})$ is a compact quantum metric space for every $n\geq 1$.
	\end{proposition}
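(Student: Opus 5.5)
The plan is to reduce the statement to the first part of \cref{lemma:compact-subsets-yield-CQMS}, applied to the ambient groupoid $\G$ with its stratification $\cK$ and the sub-operator system $\cX := \Lip_c^{\cK_m}(\G_m)$, viewed inside $\Lip_c^{\cK}(\G)$. I will check three things in order: $\cX$ is a sub-operator system of $\Lip_c^{\cK}(\G)$; the functions in $\cX$ are supported in a fixed ball $B_\ell(t)$; and the restricted seminorm $(L^{\cK,n})_{\vert_{\cX}}$ is the one the lemma controls. \cref{lemma:compact-subsets-yield-CQMS} then gives the claim for every $n \geq 1$.

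For the first point, note that $\G_m$ is clopen, so extension by zero identifies $C_c(\G_m) = C(\G_m)$ with the functions in $C_c(\G)$ supported in $\G_m$. This extension is compatible with convolution and involution, since $\G_m$ is a subgroupoid with $\G_m^{(0)} = \Go$ (although only the operator system structure is needed). It is also unital, because $1_{\Go}$ is supported in $\G_m$.

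Next I compare the Lipschitz seminorms. The pieces of $\cK_m$ are $K_i \cap \G_m$, and these are relatively clopen in $K_i$. For $f$ supported in $\G_m$, the Lipschitz quotient on $K_i$ could in principle involve a pair $\gamma \in K_i\cap\G_m$, $\mu \in K_i\setminus \G_m$, which would make $L^{K_i}_{\rm Lip}(f)$ larger than $L^{K_i\cap \G_m}_{\rm Lip}(f)$. This is the point needing care, and I expect two ways to resolve it. The first is to use the membership convention that $\Lip_c^{\cK_m}(\G_m)$ consists of those $f$ that are finite for the seminorm $L^{\cK,n}$ restricted from $\G$, which is how the statement is phrased. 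In that case the question disappears, and we simply take $\cX = \{f \in \Lip_c^{\cK}(\G) \mid \supp f \subseteq \G_m\}$. The second is to argue directly. Because $\ell$ is constant on $K_i$ and $\ell(x,y)$ is determined by $k(x,y)$, membership in $\G_m = P_m\cup\Go$ is determined by $k$, and hence by $\ell$ on $K_i$. So each $K_i$ lies entirely inside $\G_m$ or entirely outside it, no mixed pairs occur, and the two Lipschitz seminorms agree. I would verify this via \cref{lemma:relating-ell-and-k}: two elements of $K_i$ have the same $\ell$-value, and therefore the same $k$-value since $v\mapsto \sum_{v}|P(v,V_k)|$ is nondecreasing in $k$. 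There is a small subtlety when this count is constant across several levels, but $k\le m$ versus $k>m$ is still decided if one chooses $m$ appropriately. If needed, \cref{remark:Gn-is-ball} handles this by passing to $m$ for which $\G_m$ is a ball, and that can be combined with enlarging $m$.

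For the support condition, $\G_m$ is compact and $\ell$ is continuous, so $t := \max_{\G_m} \ell < \infty$ and $\supp f \subseteq \G_m \subseteq B_\ell(t)$ for every $f \in \cX$. With these points in place, \cref{lemma:compact-subsets-yield-CQMS} shows that $(\cX,(L^{\cK,n})_{\vert_\cX})$ is a compact quantum metric space for all $n$.

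The main obstacle is the compatibility of the strata with $\G_m$ in the second step. The rest is bookkeeping.
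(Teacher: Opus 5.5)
Your reduction is the same as the paper's. You embed $\Lip_c^{\cK_m}(\G_m)$ into $\Lip_c^{\cK}(\G)$ by zero extension, note that everything is supported in $\G_m\subseteq B_\ell(t)$ with $t=\max_{\G_m}\ell$, and apply \cref{lemma:compact-subsets-yield-CQMS}. The paper asserts the inclusion $\Lip_c^{\cK_m}(\G_m)\subseteq\Lip_c^{\cK}(\G)$ in one line. You are right that it needs an argument, but neither of your resolutions supplies one for the statement as given.

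Your direct route claims that every $K_i$ lies entirely inside or entirely outside $\G_m$, and this is false in general. \cref{lemma:relating-ell-and-k} only says that $\ell(x,y)>\ell(z,w)$ forces $k(x,y)>k(z,w)$. Equal lengths need not give equal $k$, and this fails exactly in the plateau situation of \cref{remark:Gn-is-ball}. For example, take $V_0=\{a\}$ and $V_1=\{b,c\}$, with edges $e_1,e_2$ from $a$ to $b$ and an edge $e_3$ from $a$ to $c$. Let $V_2=\{d\}$ receive one edge from $b$ and one from $c$, and continue the diagram arbitrarily. Then $|P(a,V_1)|=|P(a,V_2)|=3$. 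So $K_3$ contains pairs beginning with $e_1,e_2$ and agreeing afterwards, which have $k=1$ and lie in $\G_1$. It also contains pairs beginning with $e_1,e_3$ and agreeing from level $3$ on, which have $k=2$ and lie outside $\G_1$.

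You cannot escape by choosing $m$. The proposition is asserted for every $m$, and a different $m$ gives a different operator system. Your other option silently replaces $\Lip_c^{\cK_m}(\G_m)$ by the a priori smaller set $\{f\in\Lip_c^{\cK}(\G)\mid\supp f\subseteq\G_m\}$. But $\Lip_c^{\cK_m}(\G_m)$ is defined via $L^{\cK_m}_{\mathrm{Lip}}$ computed on $\G_m$ alone. Getting back to the stated set requires exactly the reverse inclusion you are trying to avoid.

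The missing step is short. You do not need the two Lipschitz seminorms to agree, because the statement uses the restriction of the ambient $L^{\cK,n}$; only finiteness matters.

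\begin{itemize}
\item Each $K_i=\ell^{-1}(\{i\})$ is closed in the compact set $B_\ell(i)$ by \cref{prop:ell-preimage-in-subgpd}, hence compact.
\item The map $s\times r$ is injective on $K_i$ by principality, so $d^{(i)}$ metrizes $K_i$.
\item Since $\G_m$ is clopen, $K_i\cap\G_m$ and $K_i\setminus\G_m$ are disjoint compact sets. They are therefore at some positive $d^{(i)}$-distance $\delta_i$.
\item For $f\in\Lip_c^{\cK_m}(\G_m)$, the zero extension $\tilde f$ then satisfies $L^{K_i}_{\mathrm{Lip}}(\tilde f)\le\max\{L^{K_i\cap\G_m}_{\mathrm{Lip}}(f),\Vert f\Vert_\infty/\delta_i\}$.
\item Only the finitely many $i\le\max_{\G_m}\ell$ contribute.
\end{itemize}

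Hence $\tilde f\in\Lip_c^{\cK}(\G)$, and the rest of your argument goes through unchanged.
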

	
	\begin{proof}
		With $\cK_{m} := \cK \cap \G_m$, we see that $\Lip_c^{\cK_m}(\G_m)$ is a sub-operator system of 
		$\Lip_c^{\cK}(\G)$
		through extending functions by zero outside of $\G_m$, since $\G_m \subseteq \G$ is a clopen subgroupoid. Since $\ell$ is continuous, it is bounded on compact sets. So there is $M_m \geq 0$ such that $\supp (f) \subseteq B_{\ell}(M_m) \subseteq \G$ for all $f \in \Lip_c^{\cK_m}(\G_m)$. The statement then follows by \cref{lemma:compact-subsets-yield-CQMS}. 
	\end{proof}
	
	\begin{remark}\label{remark:ghost-paths}
		Before stating and proving \cref{thm:CQMS-from-AF-groupoids} below, we make an observation which will simplify notation and streamline the proof. In particular, it will allow us to pretend that the Bratteli diagram only has a single source. Suppose $B = (V,E)$ is a Bratteli diagram. Let $K = \max\{ k \mid S(B) \cap V_k \} \neq \emptyset$. We augment $B$ in the following way
		\begin{itemize}
			\item Add a single vertex $\overline{v}$ at level $-1$.
			\item For each $v \in S(B) \cap V_k$, $0 \leq k\leq K$ define $u_v^k := v$. Moreover, for each $v \in S(B) \cap V_k$, $k \geq 1$, add a vertex $u_v^{i}$ to $V_i$ for each $0 \leq i \leq k-1$. 
			\item For each $v \in S(B)$, add a single edge between $\overline{v}$ and $u_v^0$.  
			Then, for all $v \in S(B) \cap V_k$, $k\geq 1$, add a single edge from $u_v^{i}$ to $u_v^{i+1}$ for each $i=0, \ldots, k-1$.
		\end{itemize}
		The resulting diagram should have a unique path from $\overline{v}$ to each $v \in S(B)$ going through the vertices $u_v^i$. 
		In fact, the resulting diagram is a Bratteli diagram (starting at level $-1$ rather than $0$) for the same AF algebra.  
		If we venture far enough out in the diagram we may relate our length function to the number of paths in the augmented diagram: 
		For all $k \geq K +1$ we observe that
		\begin{align*}
			\sum_{v \in S(B) } \vert P(v, V_k) \vert = \vert P(\overline{v}, V_k) \vert .
		\end{align*}
		Indeed,  since there are no paths between the different $v \in S(B)$ this just follows from 
		\begin{align*}
			\vert P(\overline{v}, V_k) \vert = \sum_{v \in S(B)}\vert P(\overline{v}, v)\vert \cdot \vert P(v, V_k)\vert = \sum_{v \in S(B)} 1 \cdot \vert P(v, V_k)\vert = \sum_{v \in S(B)}\vert P(v, V_k)\vert
		\end{align*}
	for all $k \geq K+1$.  
	\end{remark}
	
		We may now prove the main theorem of this section, showing that AF groupoids  induce compact quantum metric space structures through data naturally associated with it, namely a Bratteli diagram and a metric on the unit space. Note that despite the linear growth result from \cref{lemma:ellB-gives-linear-growth}, the statement does not follow immediately from \cref{prop:Christensen-Ivan-for-groupoids}, 
		as this only yields that $n\geq 2$ commutators would suffice, see \cref{prop:degree-of-RD-from-PG}.   

	\begin{theorem}\label{thm:CQMS-from-AF-groupoids}
		Let $\G$ be an AF groupoid arising from a Bratteli diagram $B = (V,E)$ with $\vert S(B)\vert < \infty$. Furthermore, let $\ell$ be given by \eqref{eq:def-AF-length-function}, and let $d$ be a metric on $\Go$ inducing the topology. 
		Equip $\G$ with the metric stratification $\cK = (K_i)_{i \in \N \cup \{0\}}$ given by $K_i = \ell^{-1}(\{i\})$ from \eqref{eq:metric-stratification-AF}, and denote the resulting seminorm using $n$ commutators by $L^{\cK, n}$ as in \cref{def:total-seminorm}.
		\begin{enumerate} 
			\item $(\Lip_c^{\cK}(\G), L^{\cK, n})$ 
			is a compact quantum metric space for all $n \geq 1$.
			\item If we equip the compact subgroupoids $\G_m$ with the restricted metric stratifications as in \cref{prop:CQMS-from-compact-AF-subgroupoid}, the sequence of compact quantum metric spaces $(\Lip_c^{\cK_m}(\G_m), (L^{\cK, n})_{\vert_{\Lip_c^{\cK_m}(\G_m)}})$ converges to  
			$(\Lip_c^{\cK}(\G), L^{\cK, n})$
			in quantum Gromov--Hausdorff distance. In fact, 
			\begin{align}\label{eq:rate-of-qGH-convergence}
				\mathrm{dist}_Q (( \Lip_c^{\cK}(\G), L^{\cK, n}), (\Lip_c^{\cK_m}(\G_m)&,(L^{\cK, n})_{\vert_{\Lip_c^{\cK_m}(\G_m)}})^2 \notag \\
				&\leq \sup_{y \in \Go} \sum_{(x,y) \in (\G_y \setminus (\G_m)_y )} \ell(x,y)^{-2} \to 0
			\end{align}
			as $m \to \infty$. 
		\end{enumerate}
	\end{theorem}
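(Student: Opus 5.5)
The plan is to use the multipliers $m_{\phi_m}$ with $\phi_m := 1_{\G_m}$. Since $\G_m$ is a clopen compact subgroupoid containing $\Go$, we have $1_{\G_m} \in C_c(\G)$. I would check positive definiteness of $\phi_m$ directly: for fixed $u$, the relation $\mu^{-1}\gamma \in \G_m$ on $\G^u$ is an equivalence relation because $\G_m$ is a subgroupoid. The quadratic form then splits as a sum of $\bigl|\sum_{\gamma\in C} f(\gamma)\bigr|^2$ over the equivalence classes $C$. By \cref{prop:pos-def-iff-cp-map}, $m_{\phi_m}$ is unital and completely positive, with $\Vert m_{\phi_m}\Vert = \Vert \phi_m\Vert_\infty = 1$. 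Moreover $\Vert T^{\phi_m}\Vert \le 1$ by \cref{remark:existence-and-boundedness-of-Tphi}.

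The next step is $\cK$-continuity, and for this I pass to a subsequence. By \cref{remark:Gn-is-ball} there are infinitely many $m$ with $\G_m = B_\ell(R_m)$. For such $m$, each $K_i = \ell^{-1}(\{i\})$ lies entirely inside or entirely outside $\G_m$. Hence $L^{\cK}_{\rm Lip}(m_{\phi_m} f) \le L^{\cK}_{\rm Lip}(f)$, so $\phi_m$ is $\cK$-continuous with coefficient $1$. By \cref{lemma:multipliers-extend-to-Roe} this gives $L^{\cK,n}(m_{\phi_m}f) \le L^{\cK,n}(f)$. I work along this subsequence; for general $m$, I compare with the largest ball-index below $m$. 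Alternatively, one can check that $\G_m$ is a union of strata, since $\ell$ depends only on $k(x,y)$, and $(x,y)\in\G_m$ exactly when $k(x,y)\le m$ for non-units.

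The heart of the argument is the norm estimate $\Vert f - m_{\phi_m}(f)\Vert \le \varepsilon_m L^{\cK,n}(f)$, with $\varepsilon_m^2 = \sup_y \sum_{(x,y)\in \G_y\setminus(\G_m)_y} \ell(x,y)^{-2}$. This must work with $n=1$, so rapid decay alone does not suffice and I cannot rely on \cref{prop:Christensen-Ivan-for-groupoids}. I would bound the difference $g := f - m_{\phi_m} f$, which is supported off $\G_m$, and hence where $\ell \ge 1$, by the $I$-norm and Cauchy--Schwarz:
\[
\sum_{\gamma\in\G_y\setminus\G_m}|f(\gamma)| \le \Bigl(\sum_{\gamma \in \G_y\setminus \G_m} \ell(\gamma)^{-2}\Bigr)^{1/2}\Bigl(\sum_{\gamma\in\G_y}|f(\gamma)|^2\ell(\gamma)^2\Bigr)^{1/2}.
\]
The last factor equals $\Vert \delta(f)1_{\Go}\Vert \le \Vert\delta(f)\Vert$, using $\ell = 0$ on units. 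The same bound applies to the range fibres via $f^*$, since $\ell(\gamma^{-1}) = \ell(\gamma)$ and $\Vert\delta(f^*)\Vert = \Vert\delta(f)\Vert$. For $n>1$ I would use $\ell \ge 1$ off $\Go$ to get $\ell^{2} \le \ell^{2n}$, which gives the analogous bound with $\delta^n$ (the paper's weight $\ell^{-2}$ suggests exactly this).

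The main obstacle I expect is showing $\varepsilon_m \to 0$ uniformly in $y$. Here I would use the linear-growth counting from \cref{lemma:ellB-gives-linear-growth}, working in the single-source picture of \cref{remark:ghost-paths}. The number of $x$ with $k(x,y) = k$ is at most $N_k := |P(\overline v, V_k)|$, and each such $x$ has $\ell = N_k$. So the sum is at most $\sum_{k>m} N_k \cdot N_k^{-2} = \sum_{k>m} N_k^{-1}$. This tail tends to zero provided $N_k$ grows at least geometrically. That holds when infinitely many levels genuinely branch, but levels where the number of paths stays constant are problematic. I would handle these by grouping levels with equal $N_k$: consecutive levels with the same count contribute the same $x$'s, since those $x$ differ from $y$ only up to the last branching level. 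The sum then reduces to a sum over strictly increasing $N_k$, where strict increase forces $N_{k'} \ge N_k + 1$. I would try to strengthen this to doubling via $N_{k'} \ge 2N_k$ whenever there is genuine merging.

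If this fails, I fall back on statement (1) alone. By \cref{thm:new-characterization}, it suffices to have $\sup_{f\in E_{\cK,n}}\Vert f - m_{\phi_m} f\Vert \to 0$, and for that I only need $\varepsilon_m \to 0$, not its exact form. Finally, \cref{prop:KK-subsystem-qGH-convergence} applied with $\Phi = m_{\phi_m}$ gives part (2) and the rate in \eqref{eq:rate-of-qGH-convergence}.
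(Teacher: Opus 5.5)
Most of your plan matches the paper: $\phi_m=1_{\G_m}$ along indices where $\G_m$ is an $\ell$-ball, the $I$-norm plus Cauchy--Schwarz estimate with weight $\ell^{-2}$, and \cref{prop:KK-subsystem-qGH-convergence} for (2). Your elementary proof of positive definiteness and your bound on the last Cauchy--Schwarz factor via $\delta^n(f)1_{\Go}$ and $\ell\ge1$ off $\Go$ are correct.

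Your ``alternative'' for $\cK$-continuity is wrong, however. Since $\ell$ is a nondecreasing function of $k$, $\G_m$ is a union of strata only when it is a ball. If some $K_i$ meets both $\G_m$ and its complement, then $1_{\G_m}$ is not $\cK$-continuous at all: test it on $f=1_{K_i}$, which has $L^{\cK}_{\rm Lip}(f)=0$. So the subsequence is genuinely needed.

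\textbf{The gap.} The real gap is the step you flag yourself, and it is the heart of the theorem. Both (1) and (2) require $\varepsilon_m\to0$ uniformly in $y$, so your fallback is circular. Bounding $\#\{x: k(x,y)=k\}$ by $N_k$ reduces everything to the tail of $\sum 1/N_k$, taken per level or over distinct values after grouping plateaus. That tail can diverge.

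Neither branching nor merging forces doubling: a branch at a vertex $w$ adds only $\vert P(\overline v,w)\vert$ paths, which may be $1$. Take $V_0=\{v_0\}$ and $V_k=\{a_k,b_k\}$ for $k\ge1$, with edges $v_0\to a_1$, $v_0\to b_1$, $a_k\to a_{k+1}$, $a_k\to b_{k+1}$ and $b_k\to b_{k+1}$. Then $N_k=k+1$ and every $a_k$ branches, yet your bound is a harmonic tail. By contrast, for $y$ eventually in the $b$-column exactly one new $x$ appears at each level, so the true quantity is about $\sum_{k>m}(k+1)^{-2}\to0$.

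\textbf{The missing idea.} Your count discards the fact that, for fixed $y$, the new elements accumulate. Put $p_k:=\vert P(\overline v,t(y_k))\vert=\#\{x: k(x,y)\le k\}$. Then $\#\{x: k(x,y)=k\}=p_k-p_{k-1}$ and $p_k\le N_k$. So the quantity to control is $\sum_{k>m}(p_k-p_{k-1})N_k^{-2}$, not $\sum_{k>m}N_k^{-1}$.

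The paper exploits this by splitting $p_k-p_{k-1}$ into two parts. The first is multiple edges along $y$, which force $p_k\ge 2p_{k-1}$; this is the sum $I_1$. The second is paths merging in from other vertices; this is the sum $I_2$.

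A shorter route uses $N_k\ge\max(p_k,N_m)$ for $k>m$:
\begin{itemize}
\item The terms with $p_k\le N_m$ sum to at most $N_m/N_m^2=1/N_m$.
\item The first term with $p_k>N_m$ is at most $1/p_k<1/N_m$.
\item The remaining terms satisfy $\frac{p_k-p_{k-1}}{p_k^2}\le\frac{1}{p_{k-1}}-\frac{1}{p_k}$ and telescope to less than $1/N_m$.
\end{itemize}
Altogether $\varepsilon_m^2\le 3/N_m$ uniformly in $y$, which tends to $0$ since $\ell$ is unbounded.
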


	\begin{proof}
		Note first that if $\G$ is compact, then the result follows from \cref{prop:CQMS-from-compact-AF-subgroupoid}. We will therefore assume throughout the proof that $\G$ is not compact, and thus that the length function $\ell$ is unbounded. 
		
		We wish to employ the norm approximation condition from \cref{thm:new-characterization}. To prove the present theorem, we will use Fourier multipliers coming from restriction down to the subgroupoids $\G_m$, and instead of the operator norm expression from \eqref{eq:norm-approximation-yields-CQMS}, we will use the corresponding $I$-norm expression. This will suffice since the $I$-norm dominates the reduced groupoid $C^*$-algebra norm. 
		Note also that it suffices to prove the statement for $n=1$ commutators, as the cases $n \geq 2$ follows from this. 
		Let
		$E_{\cK, 1}$ 
		be
		as in \eqref{eq:def-E-set}, 
		and let 
		$\varepsilon > 0$ be arbitrary.
		By \cref{thm:new-characterization} it suffices to show there is $\phi \in C_c(\G)$ with 
		$\phi_{\vert_{\Go}} = 1_{\Go}$
		which is $\cK$-continuous and for which 
		\begin{align*}
			\sup_{f\in E_{\cK, 1}} \Vert f - m_{\phi}(f) \Vert_{\Cred(\G)} < \varepsilon.
		\end{align*}
		We set $\phi = 1_{\G_m}$ for some $m$ which will be determined later.  For now however, note that we will choose $m > K:= \max \{ k \mid S(B) \cap V_k \neq \emptyset \}$. 
		We augment $B$ as in \cref{remark:ghost-paths} and see that 
		\begin{align*}
			\vert P(\overline{v}, V_k) \vert = \sum_{v \in S(B)}\vert P(v, V_k)\vert
		\end{align*}
		for all $k \geq K+1$, where $\overline{v}$ is the unique vertex in level $-1$ in the augmented diagram.

		Note that the Fourier multiplier $m_\phi$ associated with $\phi$ coincides with the conditional expectation $\Cred (\G) \to \Cred(\G_m) \subseteq \Cred(\G)$ from \cite[Theorem 6.2]{HirschbergWu2021}, which is a unital completely positive map. Therefore $\phi$ is a positive definite function by \cref{prop:pos-def-iff-cp-map}. We have $\phi \in C_c(\G) \subseteq \FS(\G)$ by \cref{prop:Cc-is-in-FS}. It follows by \cref{prop:multipliers-lift-to-crossed-product} that we get a completely positive multiplier ${T^\phi} \colon \Cred (\beta \G \rtimes \G) \to \Cred(\beta \G \rtimes \G)$. 
		
		Using \cref{remark:Gn-is-ball} we will choose $m \geq K+1$ so that $\G_m$ is itself a ball of radius $\vert P(\overline{v}, V_m)\vert$ in $\G$, and as such  the multiplier $m_\phi$ is $\cK$-continuous with coefficient $1$. Below we will furthermore specify a value $M > 0$. We note that $m$ will moreover be chosen in relation to $M$ such that $B_\ell(M) \subseteq \G_m$, which is possible for some $m$ large enough by \cref{prop:ell-preimage-in-subgpd}.

		Let $f \in E_{\cK, 1}$ be arbitrary. We calculate
		
		\begin{align}\label{eq:AF-multiplier-approximation}
			\Vert f - m_\phi (f) \Vert_{\Cred(\G)} &\leq \Vert f - 1_{\G_m} \cdot f \Vert_I \leq \Vert f - f_{\vert_{B_\ell(M)}} \Vert_I \\
			&= \max \{ \sup_{y \in \Go} \sum_{\substack{(x,y)\in \G_y \\ \ell(x,y) > M}} \vert f(x,y) \vert , \sup_{y \in \Go} \sum_{\substack{(x,y)\in \G_y \\ \ell(x,y) > M}} \vert f^*(x,y) \vert    \} \notag
		\end{align}
		Now note that for every $y \in \Go$ and $M > 0$
		\begin{align}\label{eq:AF-CS-estimate}
			\sum_{\substack{(x,y)\in \G_y \\ \ell(x,y) > M}} \vert f(x,y) \vert &=  \sum_{\substack{(x,y)\in \G_y \\ \ell(x,y) > M}} \ell(x,y)^{-1} \ell(x,y)\vert f(x,y) \vert  \\
			&\leq \big(  \sum_{\substack{(x,y)\in \G_y \\ \ell(x,y) > M}} \ell(x,y)^{-2} \big)^{1/2} \cdot \big( \sum_{\substack{(x,y)\in \G_y \\ \ell(x,y) > M}} \vert f(x,y) \vert^2 \ell(x,y)^2  \big)^{1/2} \notag \\
			&\leq \big(  \sum_{\substack{(x,y)\in \G_y \\ \ell(x,y) > M}} \ell(x,y)^{-2} \big)^{1/2} \cdot L^1_\ell(f) \notag 
			\leq  \big(  \sum_{\substack{(x,y)\in \G_y \\ \ell(x,y) > M}} \ell(x,y)^{-2} \big)^{1/2} \notag
		\end{align}
		since $f \in E_{\cK,1}$. The same calculation holds for $f^*$ since $f^* \in E_{\cK,1}$ also. Thus it suffices to show that there exists $M > 0$ such that
		\begin{align*}
			\sup_{y \in \Go} \sum_{\substack{(x,y)\in \G_y \\ \ell(x,y) > M}} \ell(x,y)^{-2} < \varepsilon.
		\end{align*}
		Now fix $y \in \Go$ and note that for any $d \in \N \cup \{0\}$ which is in the image of $\ell$, that is $d \in (\N \cup \{0\}) \cap \ell(\G)$, there is a largest 
		number $l(d)$ for which $\sum_{v \in S(B)} \vert P(v, V_{l(d)})\vert = d$. For $l(d) \geq K+1$, we get
		\begin{align*}
			d = \sum_{v \in S(B)} \vert P(v, V_{l(d)})\vert = \vert P(\overline{v} , V_{l(d)}) \vert.
		\end{align*}
		Choosing $M$ so large that $l(M+1) > K$, we may then calculate
		\begin{align*}
			\sum_{\substack{(x,y)\in \G_y \\ \ell(x,y) > M}} \ell(x,y)^{-2} &= \sum_{k = M+1}^\infty \sum_{\substack{(x,y)\in \G_y \\ \ell(x,y) = k}} \frac{1}{\ell(x,y)^2} \\ 
			&= \sum_{k = M+1}^\infty \frac{\vert \{ (x,y) \in \G_y \mid \ell(x,y) = k  \}\vert}{k^2} \\
			&= \sum_{h = l(M+1)}^{\infty} \frac{\vert \{ (x,y)\in \G_y \mid \ell(x,y) =\vert P(\overline{v} , V_{h}) \vert \}\vert}{\vert P(\overline{v} , V_{h}) \vert^2}.
		\end{align*}
		We are therefore interested in the number of paths from $\overline{v}$ to $t(y_h) \in V_h$. 
		We may express the number of paths as follows
		\begin{align*}
			\vert P(\overline{v}, t(y_{h})) \vert = \vert P(\overline{v}, t(y_{h-1}))\vert \cdot ( \vert P(t(y_{h-1}), t(y_{h})) \vert -1 )  ) +A_h
		\end{align*}
		where
		\begin{itemize}
			\item the first summand counts the number of paths up to the previous vertex that $y$ went through, that is $t(y_{h-1})$, and multiplies it with the number of edges between $t(y_{h-1})$ and $t(y_{h})$, which are not the edge $y_{h}$ itself,
			\item the term $A_h$ is the number of paths coming into $t(y_h)$ from vertices in level $h-1$ which are not $t(y_{h-1})$. 
		\end{itemize}
		Therefore
		\begin{align*}
			\sum_{h = l(M+1)}^{\infty}& \frac{\vert \{ (x,y)\in \G_y \mid \ell(x,y) = \vert P(\overline{v}, V_h) \vert \}\vert}{\vert P(\overline{v}, V_h) \vert^2} \\
			&= \sum_{h = l(M+1)}^{\infty} \frac{ \vert P(\overline{v}, t(y_{h-1}))\vert ( \vert P(t(y_{h-1}), t(y_{h})) \vert -1   ) + A_h}{\vert P(\overline{v} , V_h) \vert^2}
		\end{align*}
		We split this into two separate sums, one for each of the terms in the numerator. That is, we independently show that 
		\begin{align*}
			I_1 := \sum_{h = l(M+1)}^{\infty} \frac{ \vert P(\overline{v}, t(y_{h-1}))\vert ( \vert P(t(y_{h-1}), t(y_{h})) \vert -1 )   }{\vert P(\overline{v} , V_h) \vert^2} < \varepsilon/2,
		\end{align*}
		and
		\begin{align*}
			I_2 := \sum_{h = l(M+1)}^{\infty} \frac{  A_h}{\vert P(\overline{v} , V_h) \vert^2} < \varepsilon/2,
		\end{align*}
		by choosing $M$ large enough. Noting that $\vert P(\overline{v} , V_h) \vert \geq \vert P(\overline{v}, t(y_{h-1}))\vert ( \vert P(t(y_{h-1}), t(y_{h}) \vert -1 )  )$, 
		we first calculate
		\begin{align*}
			I_1 &\leq \sum_{h = l(M+1)}^{\infty} \frac{ \vert P(\overline{v}, t(y_{h-1}))\vert ( \vert P(t(y_{h-1}), t(y_{h})) \vert -1 )   }{\vert P(\overline{v} , V_h) \vert^2} \\
			&\leq \sum_{\substack{h \geq l(M+1) \\ \vert P(t(y_{h-1}), t(y_h))\vert \geq 2 }} \frac{ 1}{\vert P(\overline{v} , V_h) \vert }.
		\end{align*}
		The sum is only over levels where we get new contributions, so we reindex such that each term is non-zero. We end up with a new sequence $(B_p)_{p=1}^{\infty}$, 
		where $p$ corresponds to the pth instance in the above sum we have  $\vert P(t(y_{h-1}), t(y_h))\vert \geq 2 $. We then see that
		\begin{align*}
			&B_1 \leq \frac{ 1}{\vert P(\overline{v} , V_{l(M+1)}) \vert } \\
			&B_p \leq \frac{ 1}{\vert P(\overline{v} , V_{l(M+1)}) \vert + 2^{p-1} } \quad \text{for $p \geq 2$}
		\end{align*}
		since there are at least $\vert P(\overline{v} , V_{l(M+1)}) \vert + 2^{p-1}$ paths from $\overline{v}$ to the level after $l(M+1)$ where we have received $p-1$ contributions from the condition $\vert P(t(y_{h-1}), t(y_h))\vert \geq 2$. We then calculate
		\begin{align*}
			I_1 &\leq \sum_{p=1}^\infty B_p \leq \frac{ 1}{\vert P(\overline{v} , V_{l(M+1)}) \vert } + \sum_{p=2}^\infty \frac{ 1}{\vert P(\overline{v} , V_{l(M+1)}) \vert + 2^{p-1} } \\
			&\leq \frac{ 1}{\vert P(\overline{v} , V_{l(M+1)}) \vert } + \sum_{p=2}^\infty \frac{ 1}{\vert P(\overline{v} , V_{l(M+1)}) \vert^{1/2} } \frac{1}{(2^{p-1})^{1/2}}\\
			&= \frac{1}{\vert P(\overline{v} , V_{l(M+1)}) \vert} + \frac{ 1}{\vert P(\overline{v} , V_{l(M+1)}) \vert^{1/2} } \cdot \sum_{p=1}^\infty  \frac{1}{(2^{p})^{1/2}} \\
			&= \frac{1}{\vert P(\overline{v} , V_{l(M+1)}) \vert} + \frac{ 1}{\vert P(\overline{v} , V_{l(M+1)}) \vert^{1/2} } \cdot (1 + \sqrt{2}).
		\end{align*}
		We 
		can 
		then
		definitely bound 
		$I_1$
		by $ \varepsilon/2$ by choosing ${l(M+1)}$ large enough, which is achieved by choosing $M$ large enough. The bound only depends on the value $\vert P(\overline{v} , V_{l(M+1)}) \vert$, so our estimate can be done uniformly in $y \in \Go$.

		For the second sum, we get a bound by pretending that the only increases to $\vert P(\overline{v}, V_h)\vert$ come from contributions $A_h$. In particular, we get a lower bound $\vert P(\overline{v} , V_{h+1}) \vert \geq \vert P(\overline{v}, V_h) \vert + A_h$. Repeatedly applying this observation we get
		\begin{align*}
			I_2 = \sum_{h = l(M+1)}^{\infty} \frac{  A_h}{\vert P(\overline{v} , V_h) \vert^2} 
			\leq \sum_{h= l(M+1)}^\infty \frac{A_h}{\left( \vert P(\overline{v} , V_{l(M+1)} \vert  + \sum_{j=l(M+1)}^{h} A_j \right)^2} .
		\end{align*}
		We have the following bound
		\begin{align*}
			\frac{A_h}{\left( \vert P(\overline{v} , V_{l(M+1)} \vert + \sum_{j=l(M+1)}^{h} A_j \right)^2} \leq \frac{1}{\vert P(\overline{v} , V_{l(M+1)} \vert^{1/2}} \cdot \frac{A_h}{(\sum_{j= l(M+1)}^{h} A_j)^{3/2}},
		\end{align*}
		from which we obtain
		\begin{align*}
			I_2 \leq \frac{1}{\vert P(\overline{v} , V_{l(M+1)}) \vert^{1/2}} \cdot \sum_{h = l(M+1)}^{\infty} \frac{A_h}{(\sum_{j= l(M+1)}^{h} A_j)^{3/2}}	.		
		\end{align*}
		It would therefore suffice to show that the latter sum, which we will call $I_2'$, is finite. 
		Set $B_t = A_{t -1 + l(M+1)}$ for all $t \geq 1$. Further, we may reindex such that all $B_t \geq 1$, as otherwise the summand will be zero. We calculate
		\begin{align*}
			I_2' :=\sum_{h = l(M+1)}^{\infty} \frac{A_h}{(\sum_{j= l(M+1)}^{h} A_j)^{3/2}} = \sum_{t=1}^{\infty} \frac{B_t}{(\sum_{s=1}^{t} B_s)^{3/2}} 
			= \sum_{t=0}^\infty \sum_{r=2^t}^{2^{t+1}-1} \frac{B_r}{( \sum_{s=1}^{r} B_s)^{3/2}} .
		\end{align*}
		Note now that for each interval $[2^t , 2^{t+1}-1]$ the numerator $(\sum_{s=1}^{r} B_s)^{3/2} \geq (2^t)^{3/2}$. We may therefore bound the sum as follows
		\begin{align*}
			I_2' \leq \sum_{t=1}^\infty \frac{2^{t+1} - 1 - 2^t}{(2^t)^{3/2}} \leq \sum_{t=1}^\infty \frac{2^t}{2^t \cdot 2^{t/2}} = \sum_{t=1}^{\infty} \frac{1}{2^{t/2}} = 1 + \sqrt{2} < \infty.
		\end{align*}
		Since $\vert P(\overline{v} , V_{l(M+1)} \vert^{-1/2}$ can be made arbitrarily small by choosing $M$ large enough, 
		we conclude that we can bound $I_2$ by  $\varepsilon/2$. 
		As above,  
		the bound only depends on the value $\vert P(\overline{v} , V_{l(M+1)}) \vert$, so our estimate can be done uniformly in $y \in \Go$.
		
		It follows that there is $M>0$ such that 
		\begin{align*}
			\sup_{y \in \Go} \sum_{\substack{(x,y)\in \G_y \\ \ell(x,y) > M}} \ell(x,y)^{-2} < \varepsilon
		\end{align*}
		and thus that 
		\begin{align*}
			\Vert f - m_{\phi}(f) \Vert \leq \Vert f - f_{\vert_{B_\ell(M)}} \Vert < \varepsilon.
		\end{align*}
		By \eqref{eq:AF-multiplier-approximation} and \cref{thm:new-characterization} we deduce that 
		$(\Lip_c^{\cK}(\G), L^{\cK, 1})$ 
		is a compact quantum metric space. As noted, the same conclusion holds for 
		$(\Lip_c^{\cK}(\G), L^{\cK, n})$ 
		for any $n \geq 1$. 
		
		We proceed to show statement (2). Following \cref{prop:KK-subsystem-qGH-convergence} we will, given any $\varepsilon > 0$, find a unital positive map 
		$\Phi_m \colon  \Lip_c^{\cK}(\G) \to \Lip_c^{\cK_m} (\G_m)$
		for which
		\begin{enumerate}
			\item[(a)] $\Vert f - \Phi_m (f) \Vert_{\Cred(\G)} \leq \varepsilon L^{\cK, n}(f)$ for all 
			$f \in \Lip_c^{\cK}(\G)$, 
			and
			\item[(b)] 
			$(L^{\cK, n})_{\vert_{\Lip_c^{\cK_m}(\G_m)}}(\Phi_m (f)) \leq L^{\cK, n}(f)$ 
			for all 
			$f \in \Lip_c^{\cK}(\G)$.
		\end{enumerate} 
		We choose $\Phi_m = m_{1_{\G_{m}}}$. Since $\cK_m = \cK \cap \G_m$, we see that $\Phi_m(f) \in \Lip_c^{\cK_m}(\G_m)$ for every $f\in \Lip_c^{\cK}(\G)$.  
			Moreover, as noted earlier, 
		$m_{1_{\G_{m}}}$ extends to a unital completely positive map $T^{1_{\G_{m}}}$ on $\Cred(\G \ltimes \beta\G  )$. From this it follows that $\Vert T^{1_{\G_{m}}} \Vert_{\mathrm{cb}} = \Vert m_{1_{\G_{m}}}\Vert_{\mathrm{cb}} = 1$ since the maps are unital and completely positive. Combining these observations, we see from \cref{lemma:multipliers-extend-to-Roe} that $\Phi_m$ is a slip-norm contraction, so (b) is satisfied.
		
		Furthermore, we have for any $\varepsilon > 0$ verified earlier in the proof that we may choose $m$ large enough so that
		\begin{align*}
			\Vert f - \Phi_m (f) \Vert_{\Cred(\G)} \leq \Vert f - \Phi_m (f) \Vert_I < \varepsilon \cdot L^{\cK, n}(f)
		\end{align*}
		for all 
		$f \in  \Lip_c^{\cK}(\G)$, 
		though earlier in the proof we set $n=1$ and used $f \in E_{\cK,1}$. Thus (a) is also satisfied. 
		
		From \cref{prop:KK-subsystem-qGH-convergence} it follows that for any $\varepsilon > 0$ there is $N$ such that for all $m \geq N$ we have
		\begin{align*}
			\mathrm{dist}_Q (( \Lip_c^{\cK}(\G), L^{\cK, n}), (\Lip_c^{\cK_m}(\G_m),(L^{\cK, n})_{\vert_{\Lip_c^{\cK_m}(\G_m)}})) <  \varepsilon.
		\end{align*}
		The very last inequality of \eqref{eq:rate-of-qGH-convergence} follows by redoing the calculations \eqref{eq:AF-multiplier-approximation} and \eqref{eq:AF-CS-estimate} without using the rough estimate $\Vert f - 1_{\G_m} \cdot f \Vert_I \leq \Vert f - f_{\vert_{B_\ell(M)}} \Vert_I$ like we did earlier. This finishes the proof.

	\end{proof}
	
	\begin{example}[The CAR algebra]
		The CAR algebra, see for example \cite[Example III.5.4]{DavidsonBook1996}, is a well-studied $C^*$-algebra with a groupoid model which admits a particularly simple Bratteli diagram and resulting length function. Note that one approach to quantum metric geometry of the CAR algebra has previously been noted by Aguilar in for example \cite[Example 2.4]{AguilarIndLim2021}. We provide a groupoid alternative to this using \cref{thm:CQMS-from-AF-groupoids}. Indeed, a Bratteli diagram $B=(V,E)$ for the CAR algebra is given by one vertex at each level, with two edges connecting level $k$ to $k+1$ for $k \in \N\cup\{0\}$ as follows
		
		\vspace{4mm}
		\begin{center}
			\begin{tikzcd}
				\bullet \arrow[r, bend left] \arrow[r, bend right] & \bullet \arrow[r, bend left] \arrow[r, bend right] & \bullet \arrow[r, bend left] \arrow[r, bend right] & \bullet \arrow[r, bend left] \arrow[r, bend right] & \cdots
			\end{tikzcd}
		\end{center}
		Denote by $\G$ the resulting groupoid, and $\ell$ the length function as in \eqref{eq:def-AF-length-function}.
		The length function takes a particularly simple form, namely
		\begin{align*}
			\ell(x,y) = \begin{cases}
				0 & \text{ if $x=y$} \\
				2^m & \text{ if $m = k(x,y)\geq 1$}
			\end{cases}
		\end{align*}
		for $(x,y) \in \G$, where $k(x,y)$ is as in \cref{def:AF-length-function}. The unit space $\Go$ 
		is the Cantor space,
		and we may pick any metric $d$ on $\Go$ 
		metrizing this topology. 
		Choosing the metric stratification as in \eqref{eq:metric-stratification-AF}, we conclude that 
		$(\Lip_c^{\cK}(\G), L^{\cK, n})$
		is a compact quantum metric space for any $n\geq 1$.  Moreover, we obtain compact quantum metric spaces 
		$(\Lip_c^{\cK_m}(\G_m), (L^{\cK, n})_{\vert_{\Lip_c^{\cK_m}(\G_m)}})$
		from the subgroupoids $\G_m$. We see that $\G_m = B_\ell(2^m)$ for $m \in \N$, and therefore by \cref{thm:CQMS-from-AF-groupoids} we have
		\begin{align*}
			\mathrm{dist}_Q (( \Lip_c^{\cK}(\G), L^{\cK, n})&, (\Lip_c^{\cK_m}(\G_m), (L^{\cK, n})_{\vert_{\Lip_c^{\cK_m}(\G_m)}}) \leq \left( \sup_{y \in \Go} \sum_{\substack{(x,y)\in \G_y \\ \ell(x,y) > 2^n}} \ell(x,y)^{-2} \right)^{1/2} \\ 
			&\leq \left( 2\cdot \sum_{k \geq n+1} \frac{1}{2^{2k}} \right)^{1/2} = \frac{2^{1-2n}}{3},
		\end{align*}
		where the second inequality comes from using the fact that for each level there are two edges along which an infinite path might follow. 
	\end{example}

	\printbibliography
	
\end{document}